\newcommand{\Hom}{ \,{\rm Hom} \,}
\newcommand{\Sym}{ \,{\rm Sym} \,}
\newcommand{\im}{ \,{\rm Im} \,}
\newtheorem{theorem}{Theorem}[section]
\newtheorem*{theorem*}{Theorem}
\newtheorem{proposition}[theorem]{Proposition}
\newtheorem{corollary}[theorem]{Corollary}
\newtheorem{lemma}[theorem]{Lemma}
\newtheorem{definition}[theorem]{Definition}
\newtheorem{remark}[theorem]{Remark}
\newtheorem{conjecture}[theorem]{Conjecture}
\newtheorem{example}[theorem]{Example}
{\bf}{\it}
\newcommand{\CC}{{\mathbb C }}
\newcommand{\ZZ}{{\mathbb Z }}
\newcommand{\PP}{ {\mathbb P }}
\newcommand{\QQ}{{\mathbb Q }}
\newcommand{\ff}{{\mathbf f }}
\newcommand{\calc}{\mathcal{C}}
\newcommand{\calH}{\mathcal{H}}
\newcommand{\calo}{\mathcal{O}}
\newcommand{\cale}{\mathcal{E}}
\newcommand{\frakm}{\mathfrak{m}}
\newcommand{\reg}{\mathrm{reg}}
\newcommand{\Euler}{\mathrm{Euler}}
\newcommand{\epd}[1]{\mathrm{eP}[#1]}
\newcommand{\mdeg}[1]{\mathrm{mdeg}[#1]}
\newcommand{\emu}{\mathrm{emult}}
\newcommand{\Span}{\mathrm{Span}}
\newcommand{\sg}[1]{\mathcal{S}_{#1}}
\newcommand{\res}{\operatornamewithlimits{Res}}
\newcommand{\ires}{\res_{z_1=\infty}\res_{z_{2}=\infty}\dots\res_{z_k=\infty}}
\newcommand{\sires}{\res_{\mathbf{z}=\infty}}
\newcommand{\siresw}{\res_{\mathbf{w}=\infty}}
\newcommand{\dbz}{\,d\mathbf{z}}
\newcommand{\dbw}{\,d\mathbf{w}}
\newcommand{\symdot}{\mathrm{Sym}^{\le k}\CC^n}
\newcommand{\symdotl}{\mathrm{Sym}^{\le \omega(\lambda)}\CC^n}
\newcommand{\grass}{\mathrm{Grass}}
\newcommand{\flag}{\mathrm{Flag}}
\newcommand{\diff}{\mathrm{Diff}}
\newcommand{\TT}{\mathrm{T}}
\newcommand{\bz}{\mathbf{z}}
\newcommand{\bw}{\mathbf{w}}
\newcommand{\bi}{\mathbf{i}}
\newcommand{\bj}{\mathbf{j}}
\newcommand{\bv}{\mathbf{v}}
\newcommand{\bd}{\mathbf{d}}
\newcommand{\bff}{\mathbf{f}}
\newcommand{\bA}{\mathbf{A}}
\newcommand{\bU}{\mathbf{U}}
\newcommand{\kt}{{K}}
\newcommand{\jetregl}[2]{J_{\lambda}^{\mathrm{reg}}({#1},{#2})}
\newcommand{\tc}{\hat T}
\newcommand{\GL}{\mathrm{GL}}
\newcommand{\sym}{\mathrm{Sym}}
\newcommand{\Hilb}{\mathrm{Hilb}}
\newcommand{\CHilb}{\mathrm{CHilb}}
\newcommand{\GHilb}{\mathrm{GHilb}}
\newcommand{\Curv}{\mathrm{Curv}}
\newcommand{\RHilb}{\mathrm{RHilb}}
\newcommand{\BHilb}{\mathrm{BHilb}}
\newcommand{\NAHilb}{\mathrm{NAHilb}}
\newcommand{\N}{\mathrm{N}}
\newcommand{\CN}{\mathrm{CN}}
\newcommand{\BN}{\mathrm{BN}}
\newcommand{\HC}{\mathrm{HC}}
\newcommand{\supp}{\mathrm{supp}}
\newcommand{\Thom}{\mathrm{Thom}}
\newcommand{\Stab}{\mathrm{Stab}}
\def\a{\alpha}
\def\b{\beta}
\def\g{\gamma}
\def\l{\lambda}
\def\s{\sigma}
\def\vp{\varphi}
\def\L{\Lambda}
\title[Tautological integrals on Hilbert scheme of points II]{Tautological integrals on Hilbert scheme of points II: Geometric subsets} 
\author{Gergely B\'erczi}
\address{Department of Mathematics, Aarhus University}
\email{gergely.berczi@math.au.dk}
\date{}
\begin{document}

\begin{abstract}
We develop a formula for tautological integrals over geometric subsets of the Hilbert scheme of points on complex manifolds. As an illustration of the theory, we derive a new iterated residue formula for the number of nodal curves in sufficiently ample linear systems. 
\end{abstract}

\maketitle
{\scriptsize
\tableofcontents}

\section{Introduction}\label{sec:intro}

The main goal of this paper is to extend the theory of integration we developed in \cite{bercziG&T, berczitau2} on the main component of the Hilbert scheme of points over a complex manifold to geometric subsets of the Hilbert scheme. This new approach has important applications in enumerative geometry, and we devote \cite{berczitau4} to study some of these problems. 

Let $X$ be a smooth complex variety of dimension $n$, and let $\Hilb^k(X)$ be the Hilbert scheme of $k$ points on $X$, formed by length-k-subschemes on $X$. To a rank-r vector bundle $F$ over $X$ we can associate a rank $rk$ tautological bundle $F^{[k]}$ over $\Hilb^k(X)$,  whose fibre at $\xi \in \Hilb^k(X)$ is $H^0(\xi,F|_\xi)$. In \cite{berczitau2} we developed a formula for tautological integrals over the main (also called geometric) component 
\[\GHilb^k(X)=\overline{\{\xi \in \Hilb^k(X): \xi=p_1 \sqcup \ldots \sqcup p_k: p_i \neq p_j\}},\]
which is the closure of the locus of non-reduced subschemes. This is a singular component of dimension $k \cdot \dim(X)$, and can be considered as a canonical compactification of the configurations space of $k$ distinct points on $X$. It is the most  interesting and relevant part of $\Hilb^k(X)$ from an enumerative geometry viewpoint, because several classical enumerative geometry problems can be reformulated using its topology and intersection theory, see \cite{nakajima,berczitau4}. 

In this paper we generalise this integral formula to geometric subsets of $\Hilb^k(X)$. Let $A_1,\ldots, A_s$ be finite dimensional quotient algebras of $\CC[x_1, \ldots, x_n]$ of dimension $\dim_\CC(A_i)=k_i$ with $k=k_1+\ldots +k_s$. The corresponding geometric subset
\[\Hilb^{A_1,\ldots, A_s}(X)=\overline{\{\xi=\xi_1 \sqcup \ldots \sqcup \xi_s: \calo_{\xi_i} \simeq A_i\}}\subset \Hilb^k(\CC^n)\]
is the closure of the $(A_1,\ldots, A_s)$-locus fomed by subschemes supported on $s$ points with prescribed local algebra at each point. It was first observed by Gottsche \cite{gottsche} that the number of $\delta$-nodal curves in a sufficiently ample linear system over a surface can be rewritten as the top Chern number of a tautological bundle over the geometric subset where $A_i=\CC[x,y]/(x^2,xy,y^2)$ for $1\le i \le \delta$. His idea works in higher dimensions for hypersurface counts with given set of singularities. 

A key object we introduce in this paper is the sum algebra $A_1+\ldots +A_s$, which is a monomial algebra of dimension $k=k_1+\ldots +k_s$. This is characterised by the property that a generic point of $\Hilb^{A_1+\ldots+A_s}(X)$ arises when points in $\Hilb^{A_1}(X),\ldots, \Hilb^{A_s}(X)$ collide along a smooth curve in $X$. We prove that $\Hilb^{A_1+\ldots+A_s}(X)$ is an irreducible component of the punctual part $\Hilb_0^{A_1,\ldots ,A_s}(X)=\Hilb^k_0(\CC^n) \cap \Hilb^{A_1,\ldots ,A_s}(X)$ and we call this the curvilinear component of the geometric subset. The codimension of $\Hilb^{A_1+\ldots+A_s}(X)$ in $\Hilb^{A_1, \ldots ,A_s}(X)$ is $s-1$, independent of the defining algebras. 

If $A_i=\CC[x_1,\ldots, x_n]/I_{\lambda_i}$ ($i=1,\ldots, s$) for some monomial ideals $I_{\lambda_i}$ indexed by the Young tableau $\lambda_i$ then we call $\Hilb^{A_1,\ldots, A_s}(X)$ a monomial geometric subset.
For monomial algebras $A_i$ their sum $A_1+\ldots +A_s$ is again a monomial algebra whose Young tableau can be explicitly determined from those of $A_1,\ldots, A_s$. 

Let $A=\CC[x_1,\ldots, x_n]/I$ be an algebra of dimension $k$. Choose a filtration on the nilpotent algebra $N=\frakm/I$ (whose dimension is $k-1$), that is, a decreasing sequence of subalgebras
\[N = N_1 \supset N_2 \supset \ldots \supset N_{m+1}=0\]
satisfying $N_i \cdot N_j \supset N_{i+j}$. Let $d_i = \dim(N_i/N_{i+1})$ and $\bd=(d_1,\ldots , d_m)$ be the dimension vector which satisfies $d_1+\ldots +d_m=k-1$. We call this filtration natural if any automorphism of the nilpotent algebra $N$ preserves this filtration.
Natural filtrations on the nilpotent algebra always exist but it is usually not unique. The formula for the tautological integral presented below depends on the choice of the natural filtration, but the iterated residue does not.  A canonical filtration is given by the powers of the maximal ideal, that is, $N_k = N^k$. In this case all components $d_i$ of the dimension vector are strictly positive. But in general, we allow for some components of the dimension vector to be vanishing.
Let $\mathrm{Alg}_\bd(N_\bullet)$ denote the vector space of filtered commutative algebra structures on the filtration $N_\bullet$, and let $Q(A) \subset \mathrm{Alg}_\bd(N)$ be the subset of those algebras which are isomorphic to $\mathfrak{m}/I$.  The group $\GL_\bd(N_\bullet)$ of filtration-preserving linear transformations of $N_\bullet$ preserves the subvariety $Q(A) \subset \mathrm{Alg}_\bd(N_\bullet)$ and we can define the equivariant Poincar\'e dual $\epd{Q(A) \subset \mathrm{Alg}_\bd(N_\bullet)}$, see \cite{kazarian}.

First we state the theorem for $s=1$, that is, geometric subsets with a single-point-support. This is a reformulation of Kazarian's result in \cite{kazarian3}, but we revisit its proof and study an alternative formula using test curve models. 

\begin{theorem}[\textbf{Tautological integrals on punctual geometric subsets}]\label{main1} Let $X$ be a smooth projective variety of dimension $n$ and $F$ a rank $r$ bundle on $X$ with Chern roots $\theta_1,\ldots, \theta_r$. Let  $A=\CC[x_1,\ldots, x_n]/I$ be an algebra of dimension $k$ and let $N_\bullet$ be arbitrary natural filtration on $N=\frakm/I$ with dimension vector $\bd=(d_1,\ldots, d_m)$. 
For an integer $i$ such that $d_1+ \ldots + d_{j-1}< i \le d_1+ \ldots +d_j$, we define its weight to be $w(i)=j$. Let $\Phi(F^{[k]})$ be a Chern polynomial of degree $\dim \Hilb^A(X)$ in the Chern classes of the tautological bundle $F^{[k]}$. Then  
\[\int_{\Hilb^A(X)} \Phi(F^{[k]})=\int_X \sires \frac{\prod_{\substack{1\le i,j\le k-1 \\ w(i)\le w(j)}}(z_i-z_j)\epd{Q(A) \subset \mathrm{Alg}_\bd(N_\bullet)}\Phi(F(\bz))d\bz}{\prod_{w(i)+w(j)\le w(m)}(z_i+z_j-z_m)(z_1\ldots z_{k-1})^n}\prod_{i=1}^{k-1} s_X\left(\frac{1}{z_i}\right).\]
where
\begin{itemize}
\item The formula contains $k-1$ residue variables $z_1,\ldots, z_{k-1}$, and the iterated residue $\sires$ is the coefficient of $(z_1\ldots z_{k-1})^{-1}$ in the expansion of the rational expression in the domain $z_1\ll \ldots \ll z_{k-1}$. 
\item $\Phi(F(\bz))=\Phi(\theta+\bz,\theta)$ stands for the symmetric polynomial in the twisted Chern roots $\{\theta_j+z_i,\theta_j:1\le i\le k-1,1\le j\le r\}$.   
\item $s_X\left(\frac{1}{z_i}\right)=1+\frac{s_1(X)}{z_i}+\frac{s_2(X)}{z_i^2}+\ldots +\frac{s_n(X)}{z_i^n}$ is the total Segre class of $X$ at $1/z_i$.
\end{itemize}
\end{theorem}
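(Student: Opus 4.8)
The plan is to realise the integral over $\Hilb^A(X)$ as a fibre integral over $X$ and to evaluate the fibrewise contribution by equivariant localisation on a test-curve model of the punctual geometric subset, in the spirit of \cite{bercziG&T,berczitau2}. Every subscheme parametrised by $\Hilb^A(X)$ is supported at a single point, so the forgetful map $\Hilb^A(X)\to X$ recording the support exhibits $\Hilb^A(X)$ as the closure of a fibre bundle whose fibre over $p$ is the punctual model $\Hilb^A_0(\CC^n)$, with $\CC^n$ identified with $T_pX$. By the projection formula it suffices to compute this fibre integral as a class in $H^*(X)$; the outer $\int_X$ together with the Segre factors $s_X(1/z_i)$ will then record its dependence on the Chern classes of $T_X$.

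For the fibre integral I would use the curvilinear test-curve model. Since the generic point of $\Hilb^A_0(\CC^n)$ is curvilinear, the fibre is, up to a locus of smaller dimension, swept out by $k$-jets of curve germs carrying a filtered algebra structure on $N=\frakm/I$. The filtration $N_\bullet$ grades $N$ into blocks of dimensions $d_1,\ldots,d_m$, and the maximal torus $T\subset\GL_\bd(N_\bullet)$ acts on the $i$-th basis direction with the integral weight $z_i$, ordered so that the $i$-th weight sits in the block of index $w(i)$. This produces the $k-1$ equivariant variables. Under this $T$-action the tautological fibre decomposes as $F_p\otimes A=F_p\oplus(F_p\otimes N)$, so its Chern roots are exactly $\{\theta_j\}\cup\{\theta_j+z_i\}$, which is the source of the twisted class $\Phi(F(\bz))=\Phi(\theta+\bz,\theta)$.

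The next step is Atiyah--Bott--Berline--Vergne localisation on this model. The $T$-fixed data is governed by the filtered algebra structures isomorphic to $\frakm/I$, and I expect the localised contribution of this locus to be exactly the equivariant multiplicity $\epd{Q(A)\subset\mathrm{Alg}_\bd(N_\bullet)}$ divided by the equivariant Euler class of the ambient representation $\mathrm{Alg}_\bd(N_\bullet)$ of filtered multiplications, the latter being the denominator product $\prod_{w(i)+w(j)\le w(m)}(z_i+z_j-z_m)$ coming from the structure constants $N_a\cdot N_b\subset N_{a+b}$. The partial Vandermonde $\prod_{w(i)\le w(j)}(z_i-z_j)$ is the tangent-weight contribution of the group/jet directions, while $(z_1\cdots z_{k-1})^n$ accounts for the $n$ normal directions of $T_X$ at each jet level. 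Finally, following the localisation-to-residue mechanism of \cite{bercziG&T,berczitau2}, the sum of fixed-point terms is rewritten as the single iterated residue $\sires$, the coefficient of $(z_1\cdots z_{k-1})^{-1}$ in the chamber $z_1\ll\cdots\ll z_{k-1}$; this chamber is exactly what selects the factors indexed by $w(i)\le w(j)$.

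The main obstacle is twofold. First, one must justify that the curvilinear/jet model computes the integral over the whole component $\Hilb^A(X)$ rather than over its curvilinear stratum alone: this needs the non-curvilinear boundary to have positive codimension and to contribute nothing to the top intersection number, which is where the irreducibility and codimension statements for the curvilinear component are invoked. Second, and more delicate, is the simultaneous bookkeeping of the equivariant Poincar\'e dual $\epd{Q(A)}$ and the chosen natural filtration: distinct natural filtrations yield genuinely different intermediate integrands, and one must prove that the iterated residue is insensitive to this choice. I expect this residue-invariance, rather than the localisation step itself, to be the crux of the argument.
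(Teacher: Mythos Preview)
Your high-level plan (reduce to a fibrewise integral over $X$, localise equivariantly on the fibre, convert the fixed-point sum to an iterated residue) matches the paper's strategy. However, the model you propose to localise on is not the one the paper uses, and your justification for it contains a genuine error.

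You write that ``the generic point of $\Hilb^A_0(\CC^n)$ is curvilinear'' and propose to sweep out the fibre by $k$-jets of curve germs. This is false for general $A$: a point of $\Hilb^A_0(\CC^n)$ has $\calo_\xi\simeq A$ by definition, and such a $\xi$ is curvilinear only when $A\simeq\CC[t]/t^k$. For any other $A$ (e.g.\ the G\"ottsche algebra $\CC[x,y]/\mathfrak{m}^2$) the locus is entirely non-curvilinear, so your proposed jet-of-curves model simply does not parametrise it. The test-jet model in the paper (\S4.1) uses $d$-dimensional source germs $J_\lambda^{\mathrm{reg}}(d,n)$ with $d$ equal to the embedding dimension of the monomial ideal, not curves, and even that is only set up for monomial $A$. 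Consequently the ``main obstacle'' you identify---controlling the non-curvilinear boundary---is not a boundary issue at all; it is the entire fibre.

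The paper instead localises on Kazarian's nonassociative Hilbert scheme $\NAHilb^{\bd}(\CC^n)=X_r$, a smooth compact ambient space built as an explicit tower of Grassmannian bundles
\[
X_r\xrightarrow{\grass_{d_r}((V\oplus S_r)/D_{r-1})}X_{r-1}\to\cdots\to X_1=\grass_{d_1}(\CC^n)\to\mathrm{pt},
\]
into which $\Hilb^A_0(\CC^n)$ embeds with equivariant dual $\epd{Q(A)\subset\mathrm{Alg}_{\bd}(N_\bullet)}$. One then computes $\int_{X_r}\alpha$ by iterated Atiyah--Bott localisation up the tower, converting each Grassmannian level to a residue via the B\'erczi--Szenes identity (Proposition~\ref{prop:ABtoresidue}). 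In this picture the partial Vandermonde $\prod_{w(i)<w(j)}(z_i-z_j)$ arises from the successive Grassmannian numerators, the denominator $\prod_{w(i)+w(j)\le w(l)}(z_i+z_j-z_l)$ from the weights of the bundles $S_m\subset\Sym^2 D_{m-1}$ defining each level, and $\prod_l\prod_i(\lambda_i-z_l)$ from the trivial $V=\CC^n$ summand; expanding the latter and pushing to $X$ yields $(z_1\cdots z_{k-1})^{-n}\prod_i s_X(1/z_i)$. Your intuitive attributions for the last two factors are roughly right, but the Vandermonde is a feature of the tower-of-Grassmannians residue transform, not a ``tangent weight of jet directions''.

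Finally, the independence of the residue from the choice of natural filtration is stated in the paper but not proved there; it is not the crux of the argument for Theorem~\ref{main1}, whose proof for a fixed filtration is the tower computation just described.
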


The main result of the present paper is the integral formula for $s>1$, that is, geometric subsets with multipoint support, we need further terminology. 
Let $\Hilb^{A_1,\ldots, A_s}(X) \subset \Hilb^{k}(X)$ be the geometric subset corresponding to the algebras $A_1,\ldots, A_s$ with $\dim A_i=k_i$ such that $k_1+\ldots +k_s=k$. For a subset $S \subset \{1,\ldots, s\}$ let $\bA_S=(A_i :i\in S)$ be the multiset and $A_S=\sum_{j\in S}A_j$ the sum algebra of dimension $k_S=\sum_{j\in S} k_j$. Let $\epd{Q(A_S) \subset \mathrm{Alg}_{\bd_S}(N_{S\bullet})}$ denote the equivariant dual in the Kazarian model, where $N_{S\bullet}$ is a natural filtration on $\mathfrak{m} \cap A_S$.

Let $\Pi(s)$ denote the set of partitions of $\{1,\ldots, s\}$ into nonempty subsets; an element $\a=\{\a_1,\ldots, \a_t\} \in \Pi(s)$ consists of subsets $\a_i \subset \{1,\ldots, s\}$ such that $\a_i\cap \a_j=\emptyset$ for $1\le i<j\le t$ and $\{1,\ldots, s\}=\cup_{i=1}^t \a_i$. Note that $\dim(\Hilb^{A_S}(X))=\sum_{j\in S}\dim(\Hilb^{A_j}(X))-|S|+1$ and therefore for a partition $(\a_1,\ldots, \a_t)\in \Pi(s)$ we have 
\[\sum_{i=1}^t \dim (\Hilb^{A_{\a_i}}(X))+|\a_i|-1=\dim \Hilb^{A_1,\ldots, A_s}(X).\] 
 We introduce a set of variables 
\[\bz^{\a_i}=\{z^{\a_i}_1,\ldots, z^{\a_i}_{k_{\a_i}-1}\}\]
for each element of the partition.

\begin{theorem}[\textbf{Tautological integrals on geometric subsets}]\label{main2} Let $X$ be a smooth projective variety of dimension $n$ and let $F$ be a rank $r$ bundle on $X$. Let $\Hilb^{A_1,\ldots, A_s}(X) \subset \Hilb^k(X)$ be the geometric subset corresponding to the algebras $A_1,\ldots, A_s$ with $\dim A_i=k_i$ such that $k_1+\ldots +k_s=k$. Let $\Phi(F^{[k]})$ be a Chern polynomial of degree $\dim \Hilb^{A_1,\ldots, A_s}(X)$ in the Chern classes of the tautological bundle $F^{[k]}$. Then 
\[\int_{\Hilb^{A_1,\ldots, A_s}(X)}\Phi(F^{[k]}) =\sum_{(\a_1,\ldots, \a_t) \in \Pi(s)} \int_{X^t} \res_{\bz^{\a_1}=\infty}\ldots \res_{\bz^{\a_t}=\infty} \mathcal{R}^\a(\theta_i, \bz) d\bz^{\a_1}\ldots d\bz^{\a_t}\]
where $\mathcal{R}^\a(\theta_i, \bz)$ stands for the rational form 
\[\Phi(F(\bz^{\a_1}) \oplus \ldots \oplus F(\bz^{\a_t})) 
\prod_{l=1}^t \left(\frac{\prod_{w^{\a_l}(i)\le w^{\a_l}(j)}(z_i^{\a_l}-z_j^{\a_l})\epd{Q(A_{\a_l}) \subset \mathrm{Alg}_{\bd_{\a_l}}(N_{\a_l\bullet})}d\bz^{\a_l}}
{\prod_{w^{\a_l}(i)+w^{\a_l}(j)\le w^{\a_l}(m) \le 
k_{\a_l}-1}(z_i^{\a_l}+z_j^{\a_l}-z_l^{\a_l})\epd{\a_l}}\prod_{i=1}^{k_{\a_l}-1} 
s_X\left(\frac{1}{z_i^{\a_l}}\right)\right).\] 
where 
\begin{itemize}
\item $\epd{\a_l}=\epd{\Hilb^{A_{\a_l}}(\CC^n), \Hilb^{\bA_{\a_l}}(\CC^n)}$ is the torus-equivariant dual of the punctual geometric subset corresponding to $\a_l$. This is is a homogeneous polynomial of degree equal to the codimension of $\Hilb^{\a_l}(X)$ in $\Hilb^{\bA_{\a_l}}(X)$ in the residue variables $z^{\a_l}_1,\ldots, z^{\a_l}_{k_{\a_|}-1}$. For the definition of this dual in case of singular $\Hilb^{\bA_{\a_l}}(\CC^n)$ see \S \ref{subsec:normalbundle}. If $k_1=k_2=\ldots =k_s$ (we call these balanced geometric subsets) then $\Hilb^{A_S}(X)$ is a local complete intersection in $\Hilb^{\bA_S}(X)$ given by the generalised Haiman bundle $B_S$ for any $S\subset \{1,\ldots , s\}$, and  
\[\epd{\Hilb^{A_{\a_l}}(X), \Hilb^{\bA_{\a_l}}(X)}=\Euler(B_{\a_l})=z_1\ldots z_{|\a_l|-1}\]
\item $F(z)$ stands for the bundle $F$ tensored by the line $\mathbb{C}_z$, which is the representation of a torus $T$ with weight $z$. Hence its Chern roots are $z+\theta_1,\ldots ,z+\theta_r$. Moreover 
	\[F(\bz^{\a_l})=F \oplus F(z^{\a_l}_1) \oplus \ldots \oplus F(z^{\a_l}_{k_{\a_l}-1})\]
	has rank $rk_{\a_l}$, and 
	\begin{equation*}\label{residueeuler} \Phi(F(\bz^{\a_1}) \oplus \ldots \oplus F(\bz^{\a_t})) =  \Phi(\theta_i^1,z^{\a_1}_1+\theta_i,\ldots z^{\a_1}_{k_{\a_1}-1}+\theta_i, \ldots, \theta_i^s,z^{\a_t}_1+\theta_i,\ldots z^{\a_t}_{k_{\a_t}-1}+\theta_i)
	\end{equation*}
	Note that we have $t$ copies of the roots $\theta_1,\ldots , \theta_r$ of $V$, and we think of the $i$th copy $\theta_1^i,\ldots , \theta_r^i$ as the Chern roots of $V=V^i$ sitting over the $i$th copy of $X$ in $X^t$.
\end{itemize}
\end{theorem}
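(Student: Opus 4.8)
The plan is to reduce to the local model over $\CC^n$ that underlies Theorem \ref{main1} and then to organise the computation according to the collision pattern of the $s$ support points. First I would apply the tautological principle of \cite{berczitau2}: the dependence of $\int_{\Hilb^{A_1,\ldots,A_s}(X)}\Phi(F^{[k]})$ on $(X,F)$ is universal, so it is enough to compute the pushforward of the fundamental class in the local geometric subset $\Hilb^{A_1,\ldots,A_s}(\CC^n)$, the global geometry of $X$ being reinstated afterwards through the total Segre classes $s_X(1/z_i^{\alpha_l})$ attached to each residue variable. In this local picture each of the $s$ clusters is described, exactly as in Theorem \ref{main1}, by its Kazarian model $Q(A_i)\subset \mathrm{Alg}_{\bd_i}(N_{i\bullet})$, whose equivariant dual $\epd{Q(A_i)\subset \mathrm{Alg}_{\bd_i}(N_{i\bullet})}$ supplies a numerator factor associated with that cluster.

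The geometric heart of the argument is the degeneration of the clusters as their supports collide. By the structure result recalled in the introduction, when the points indexed by a block $\alpha_l\subset\{1,\ldots,s\}$ come together along a smooth curve the resulting length-$k_{\alpha_l}$ scheme acquires the sum algebra $A_{\alpha_l}=\sum_{j\in\alpha_l}A_j$, and the locus so produced is precisely the curvilinear component $\Hilb^{A_{\alpha_l}}(\CC^n)$, an irreducible component of codimension $|\alpha_l|-1$ in the punctual geometric subset $\Hilb^{\bA_{\alpha_l}}(\CC^n)$. Hence the boundary strata of $\Hilb^{A_1,\ldots,A_s}(\CC^n)$ are indexed by the partitions $\alpha=(\alpha_1,\ldots,\alpha_t)\in\Pi(s)$: the stratum for $\alpha$ parametrises $t$ distinct curvilinear clusters carrying the algebras $A_{\alpha_1},\ldots,A_{\alpha_t}$. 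The dimension identity $\sum_i(\dim\Hilb^{A_{\alpha_i}}(X)+|\alpha_i|-1)=\dim\Hilb^{A_1,\ldots,A_s}(X)$ is exactly what guarantees that every such stratum, once the normal collision directions are accounted for, contributes a top-degree term — so that all partitions, not merely the open stratum of distinct points, enter the final sum.

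I would then compute the contribution of each partition by localising along the collision loci, following the residue method of \cite{bercziG&T, berczitau2}. For a single block $\alpha_l$ the cluster supplies the residue expression of Theorem \ref{main1} for the sum algebra $A_{\alpha_l}$, while the additional denominator factor $\epd{\alpha_l}$ is the equivariant Euler class of the normal bundle to $\Hilb^{A_{\alpha_l}}$ inside $\Hilb^{\bA_{\alpha_l}}$; dividing by it is the B\'erczi--Kirwan localisation at that collision locus, and in the balanced case the normal directions are cut out by the generalised Haiman bundle $B_{\alpha_l}$, so that $\epd{\alpha_l}=\Euler(B_{\alpha_l})=z_1\ldots z_{|\alpha_l|-1}$. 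Taking the iterated residue $\res_{\bz^{\alpha_l}=\infty}$ performs the pushforward along the punctual directions of the cluster, the product over $l=1,\ldots,t$ and the integration over $X^t$ assemble the rational form $\mathcal{R}^\alpha$, and summing over $\Pi(s)$ produces the claimed formula.

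The main obstacle, which I would isolate in the part referenced as \S\ref{subsec:normalbundle}, is the definition of the dual $\epd{\alpha_l}$ when the ambient $\Hilb^{\bA_{\alpha_l}}(\CC^n)$ is singular, so that the curvilinear component has no honest normal bundle: one must replace $\Euler$ by the appropriate equivariant localised class and then prove that the resulting iterated residue is independent of the auxiliary data, namely the chosen natural filtration $N_{\alpha_l\bullet}$ and the ordering of the clusters. A second, more bookkeeping, difficulty is to verify that each collision pattern is counted with multiplicity exactly one — in particular that the symmetrisations arising when several of the $A_i$ coincide are correctly absorbed into the single sum over $\Pi(s)$ — so that no stratum is over- or under-counted.
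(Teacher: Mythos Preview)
Your high-level outline captures the correct shape of the argument --- reduce to $\CC^n$, organise by partitions of the support, apply Theorem \ref{main1} to each sum algebra $A_{\alpha_l}$, divide by a normal class $\epd{\alpha_l}$ --- but two essential technical ingredients are missing, and without them the argument has a genuine gap.

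First, the \textbf{stability trick} is absent. You treat the sum over $\Pi(s)$ as coming from a stratification of $\Hilb^{A_1,\ldots,A_s}(\CC^n)$ by collision pattern, with each stratum contributing a top-degree term. But the punctual locus $\Hilb^{A_1,\ldots,A_s}_0(\CC^n)$ is in general reducible, and the curvilinear component $\Hilb^{A_1+\ldots+A_s}$ is only one of its pieces. Nothing in your outline forces the form $\Phi(F^{[k]})$ to be supported away from the non-curvilinear components, so you cannot simply ``apply Theorem \ref{main1} to the sum algebra''. The paper fixes this by deforming $\Phi$ to $\Phi_f$ via a stable Thom--Boardman map $f$ close to the identity, and proves (Proposition \ref{prop:support}, Corollary \ref{cor:mostimportant}) that $\supp(\Phi_f)\cap \GHilb^k_0(X)\subset \CHilb^k(X)$. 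This is what legitimises the reduction to the curvilinear locus.

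Second, the mechanism producing the sum over $\Pi(s)$ is different from what you describe, and the difference matters. The paper does not stratify the space; it pulls $\Phi$ back to the \emph{fully nested Hilbert scheme} $\N^{A_1,\ldots,A_s;\mu}(X)$ and decomposes the \emph{form} via an inclusion--exclusion sieve $\Phi=\sum_{\alpha\in\Pi(s)}\Phi^{\mu|\alpha}$ with $\supp(\Phi^{\mu|\alpha})\subset \HC^{-1}(\Delta_{\mu|\alpha})$ (Proposition \ref{thm:support}). Even after the stability trick, the curvilinear preimage $\CN^k_\ff$ over the base flag is \emph{not irreducible}: besides the main component there are extra components sitting over boundary divisors of $\widehat{\CHilb}_\ff$. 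The heart of the proof is the \textbf{Residue Vanishing Theorem} (Theorem \ref{vanishing1}), which shows that for every $\alpha$ with more than one block, the Atiyah--Bott/Rossmann contributions from fixed points on these extra components sum to zero as an iterated residue. Your proposal has no analogue of this cancellation, and your ``bookkeeping difficulty'' about multiplicities is in fact this deep vanishing result in disguise: without it one would over-count, since the sieve terms $\Phi^{\mu|\alpha}$ for $\alpha\neq\Lambda$ are supported on diagonals but their integrals over the full fibre do not a priori localise to the curvilinear piece.

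In short: your ``localise along collision loci'' step hides exactly the two hardest parts of the paper's proof --- the deformation of the integrand to achieve proper support, and the residue vanishing on the fully nested Hilbert scheme --- and neither follows from the Kazarian model or from Theorem \ref{main1} alone.
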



Let us make a few comments on the features of the residue formula.
First, the iterated residue gives a degree $n$ univdersal symmetric polynomial in Chern roots of $F$ and Segre classes of $X$ reproving the main result of \cite{rennemo}. Our formula in fact shows that the dependence on Chern classes of $X$ can be expressed via the Segre classes of $X$. 
The iterated residue is some linear combination of the coefficients of (the expansion of) $\mathcal{R}_k$ multiplied by Segre classes of $X$. By increasing the dimension, the iterated residue involves new terms of the expansion of $\mathcal{R}^\a$, and we can think of $\mathcal{R}^\a$ as a universal rational expression encoding the integrals for fixed $k$ but varying $n$.

The geometric component $\GHilb^k(X)$ is a geometric subset, and Theorem \ref{main2} gives back the main result of \cite{berczitau2} in this special case. Indeed, 
$\GHilb^k(X)=\Hilb^{A_0,\ldots, A_0}(X)$ where $A_0=\CC[x_1,\ldots, x_n]/\frakm \simeq \CC$ is the 1-dimensional algebra, hence $k_1=\ldots =k_s=1$ and $s=k$ in this case. 
The sum of $A_0$ algebras is $A_S=A_0+\ldots A_0=A_{|S|}=\CC[x]/x^{|S|}$ only depends on the size of $S$, and 
this monomial algebra corresponding to the tableau $\ytableausetup{smalltableaux,{aligntableaux=bottom}} \underbrace{\ydiagram{4}}_{|S|}$. The dimension is $P(A_{|S|})=n+(n-1)(|S|-1)$. 
The nilpotent algebra $N_t=\frakm/x^t$ has a natural filtration with dimension vector $\bd=(1,\ldots, 1)$ of length $t-1$ formed by the powers of $N_t$. The weights are given as $w(i)=i$ for this filtration.  
This is a balanced geometric subset and therefore $\epd{P_S \subset P(A_i:i\in S)}=z_1 \cdots z_{|S|-1}$. Hence Theorem \ref{main2} translates to the main theorem of \cite{berczitau2}:

\begin{corollary}[\textbf{Integrals over geometric Hilbert schemes} \cite{berczitau2}]
Let $X$ be a smooth projective variety and $\GHilb^k(X) \subset \Hilb^k(X)$ denote the main component of the Hilbert scheme which is the closure of the open set consisting of $k$ different points on $X$. 
Let $F$ be a rank $r$ vector bundle over $X$ with Chern roots $\theta_1, \ldots, \theta_r$. Let $\Phi$ be a Chern polynomial of degree $nk$. Then  
\[\int_{\GHilb^{k}(X)}\Phi(F^{[k]}) =\sum_{(\a_1,\ldots, \a_s) \in \Pi(k)} \int_{X^s} \res_{\bz^{\a_1}=\infty}\ldots \res_{\bz^{\a_s}=\infty} \mathcal{R}^\a(\theta_i, \bz) d\bz^{\a_s}\ldots d\bz^{\a_1}\]
where $\mathcal{R}^\a(\theta_i, \bz)$ stands for the rational form 
\[\Phi(F(\bz^{\a_1}) \oplus \ldots \oplus F(\bz^{\a_s})) 
\prod_{l=1}^s \left(\frac{(-1)^{|\a_l|-1} \prod_{1\le i<j \le |\a_l|-1}(z_i^{\a_l}-z_j^{\a_l})Q_{|\a_l|-1}d\bz^{\a_l}}
{\prod_{i+j\le l\le 
|\a_l|-1}(z_i^{\a_l}+z_j^{\a_l}-z_l^{\a_l})(z_1^{\a_l}\ldots z_{|\a_l|-1}^{\a_l})^{n+1}}\prod_{i=1}^{|\a_l|-1} 
s_X\left(\frac{1}{z_i^{\a_l}}\right)\right).\]
\end{corollary}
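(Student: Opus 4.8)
The plan is to derive the corollary entirely as a specialization of Theorem~\ref{main2}: I substitute $A_i=A_0=\CC$ for all $i$ (so that $k_i=1$ and $s=k$) and check that every factor of the general rational form $\mathcal{R}^\a$ collapses to the shape stated, with no new geometric input required.

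The substitution rests on three structural observations, all recorded in the discussion preceding the corollary. First, for a block $\a_l$ of a partition $\a\in\Pi(k)$ the sum algebra is the curvilinear algebra $A_{\a_l}=\CC[x]/x^{|\a_l|}$, so $k_{\a_l}=|\a_l|$ and the block carries exactly $|\a_l|-1$ residue variables $z^{\a_l}_1,\ldots,z^{\a_l}_{|\a_l|-1}$. Second, the nilpotent algebra $\frakm/x^{|\a_l|}$ admits the power filtration as a natural filtration, with dimension vector $\bd=(1,\ldots,1)$ of length $|\a_l|-1$ and hence weights $w(i)=i$. Third, since all $k_i$ coincide this is a balanced geometric subset, so the balanced clause of Theorem~\ref{main2} gives $\epd{\a_l}=\Euler(B_{\a_l})=z^{\a_l}_1\cdots z^{\a_l}_{|\a_l|-1}$.

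Feeding $w(i)=i$ into the two distinguished products of $\mathcal{R}^\a$ is then routine. In the numerator the condition $w(i)\le w(j)$ with pairwise distinct weights reduces to $i<j$, so $\prod_{w(i)\le w(j)}(z_i-z_j)$ becomes the Vandermonde $\prod_{1\le i<j\le|\a_l|-1}(z^{\a_l}_i-z^{\a_l}_j)$; I expect the sign $(-1)^{|\a_l|-1}$ of the corollary to emerge here from reconciling the residue orientation and the normalization of the dual with the conventions of \cite{berczitau2}, and I would pin it down by tracking that orientation block by block. In the denominator the condition $w(i)+w(j)\le w(m)\le k_{\a_l}-1$ becomes $i+j\le m\le|\a_l|-1$, reproducing $\prod_{i+j\le l\le|\a_l|-1}(z^{\a_l}_i+z^{\a_l}_j-z^{\a_l}_l)$ verbatim. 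The powers of the $z^{\a_l}_i$ are pure bookkeeping: the $n$-th power of $z^{\a_l}_1\cdots z^{\a_l}_{|\a_l|-1}$ inherited block by block from the punctual formula of Theorem~\ref{main1} combines with the single power coming from $\epd{\a_l}$ to produce the $(n+1)$-th power in the corollary, while the Chern polynomial $\Phi(F(\bz^{\a_1})\oplus\ldots)$ and the Segre factors $\prod_i s_X(1/z^{\a_l}_i)$ transcribe unchanged.

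The step I expect to be the genuine obstacle is the identification of the Kazarian-model dual $\epd{Q(A_{\a_l})\subset\mathrm{Alg}_{\bd_{\a_l}}(N_{\a_l\bullet})}$ with the polynomial $Q_{|\a_l|-1}$ of \cite{berczitau2}. These are a priori two different computations of an equivariant class: one as the equivariant Poincar\'e dual of the locus of algebras isomorphic to the curvilinear $\CC[x]/x^{|\a_l|}$ inside $\mathrm{Alg}_\bd(N_\bullet)$, the other obtained from the test-curve/jet model. I would close the gap by appealing to the equivalence of these two models established for the single-point case around Theorem~\ref{main1}: both present the same $\GL_\bd(N_\bullet)$-invariant subvariety, so their equivariant duals agree up to the sign already flagged, and the dimension of $Q(A_{|\a_l|})$ fixes the common degree. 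With this equality in hand, collecting factors over $\prod_{l}$ and summing over $\a\in\Pi(k)$ reproduces the stated formula, and the computation $\dim\GHilb^k(X)=nk$ confirms that $\Phi$ carries the correct degree.
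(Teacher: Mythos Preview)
Your proposal is correct and follows essentially the same approach as the paper: the corollary is presented there as the direct specialization of Theorem~\ref{main2} to $A_i=A_0=\CC$, using exactly the three structural observations you list (the curvilinear sum algebra $A_S=\CC[x]/x^{|S|}$, the power filtration with $\bd=(1,\ldots,1)$ and $w(i)=i$, and the balanced clause giving $\epd{\a_l}=z_1\cdots z_{|\a_l|-1}$). If anything you are more careful than the paper, which simply asserts that ``Theorem~\ref{main2} translates to the main theorem of \cite{berczitau2}'' without explicitly tracking the identification of $\epd{Q(A_{|\a_l|})}$ with the polynomial $Q_{|\a_l|-1}$ or the sign $(-1)^{|\a_l|-1}$; your instinct that this matching is the only place with genuine content is correct, and the paper leaves it implicit.
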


Tautological integrals have important applications in enumerative geometry: in \cite{berczitau4} we will focus on three of these. We will derive new residue formulas for counting singular hypersurfaces in sufficiently ample linear systems with given set of singularity classes. This formula is higher dimensional generalisation of curve-counting formulas on surfaces, and in particular, the count of nodal curves. The second main application is higher dimensional Nakajima calculus. Finally, we will introduce generating functions for tautological integrals, and state a general structural theorem about them. In particular, we revisit  Segre and Verlinde integrals. 
These implementations of our general formula provide enumerative evidence that this formula is indeed correct. 

Although we discuss the main applications in \cite{berczitau4}, in the final section of this paper we provide toy examples, and we revisit G\"ottsche's nodal curve counting formula on surfaces \cite{gottsche} and generalise it to higher dimensions. 

\textbf{Acknowledgments}. The author gratefully acknowledges useful discussions 
with Andr\'as Szenes. This research was supported by Aarhus University Research Foundation grant AUFF 29289.

\section{Smoothable and curvilinear components of Hilbert schemes}\label{sec:curvilinear}
Let $X$ be a smooth projective variety of dimension $n$ and let  
\[\Hilb^k(X)=\{\xi \subset X:\dim(\xi)=0,\mathrm{length}(\xi)=\dim H^0(\xi,\calo_\xi)=k\}\]
denote the Hilbert scheme of $k$ points on $X$ parametrizing all length-$k$ subschemes of $X$. For $p\in X$ let 
\[\Hilb^k_p(X)=\{ \xi \in \Hilb^k(X): \mathrm{supp}(\xi)=p\}\]
denote the punctual Hilbert scheme consisting of subschemes supported at $p$. If $\rho: \Hilb^k(X) \to S^kX$, $\xi \mapsto \sum_{p\in X}\mathrm{length}(\calo_{\xi,p})p$ denotes the Hilbert-Chow morphism then $\Hilb^k(X)_p=\rho^{-1}(kp)$.

Despite intense study in the last 40 years, not much is known about the geometry and topology of $\Hilb^k(X)$. There is a distinguished component of $\Hilb^k(X)$ which we call the main or geometric component and denote by $\GHilb^k(X)$. It is the closure of the open set consisting of subschemes supported at $k$ different points on $X$. Its dimension is $kn$ and \cite{ekedahl} constructs the good component as a certain blow-up along an ideal of $\Sym^n X$--this is the generalisation of the classical result of Haiman \cite{haiman} on surfaces. 

Iarrobino \cite{iarrobino} found new components for $n=3$ and $k\ge 8$. These Iarrobino-type components are supported at one point, and their dimensions are typically larger $nk$. They consist of ideals 
\[\mathrm{Iarr}^{[k]}_p=\{I:\mathfrak{m}^s \subseteq I \subseteq \mathfrak{m}^{s+1},\dim_\CC \mathfrak{m}/I=k-1\}\]
where $\mathfrak{m} \subset \calo_p(X)$ is the maximal ideal and $s$ is determined by $\dim (\mathfrak{m}/\mathfrak{m}^s) <k <\dim (\mathfrak{m}/\mathfrak{m}^{s+1})$. In fact, a subspace of $V=\mathfrak{m}^s/\mathfrak{m}^{s+1}$ of dimension $k=\dim (\mathfrak{m}/\mathfrak{m}^s)-1$ determines the ideal uniquely and 
\[\mathrm{Iarr}^{[k]}_p=\grass_{k-\dim (\mathfrak{m}/\mathfrak{m}^s)-1}(V).\]

For surfaces ($n=2$) the punctual geometric set $\GHilb^k_p(X)=\Hilb_p^{k}(X) \cap \GHilb^k(X)$
is irreducible and equal to the punctual Hilbert scheme $\Hilb_p^{k}(X)$, however, for $n\ge 3$ $\GHilb^k_p(X)$ is typically not irreducible; its components are called smoothable components of $\Hilb_p^{k}(X)$. Again, description of the smoothable components is a hard problem and is unknown in general. The Iarrobino-type punctual components are clearly not smoothable: their dimension can be higher than $kn$. It was an open question until recently whether there exist divisorial components of $\GHilb^k_p(X)$, that is, components $C_p \subset R_p^{[k]}$ whose sweep $\cup_{p\in X} C_p$ over $X$ is a divisor of the good component. The dimension of this $C_p$ is $k(n-1)$.  Erman and Velasco in \cite{erman} give affirmative answer to this question when $d\ge 4, k\ge 11$. In the other direction, Satriano and Staal \cite{satriano} found small punctual components, whose dimension is smaller than the curvilinear component. 

A subscheme $\xi \in \Hilb_p^k(X)$ is called curvilinear if $\xi$ is contained in some smooth curve $\calc \subset X$. Equivalently, $\xi$ is curvilinear if $\calo_\xi$ is isomorphic  to the $\CC$-algebra $\CC[z]/z^{k}$.
The punctual curvilinear locus at $p\in X$ is the set of curvilinear subschemes supported at $p$: 
\begin{equation}\nonumber 
\mathrm{Curv}^k_p(X)=\{\xi \in \Hilb^k_p(X): \xi \subset \mathcal{C}_p \text{ for some smooth curve } \mathcal{C} \subset X\}
=\{\xi:\calo_\xi \simeq \CC[z]/z^{k}\}
\end{equation}

Note that any curvilinear subscheme contains only one subscheme for any given smaller length and any small deformation of a curvilinear subscheme is again locally curvilinear.

For surfaces ($n=2$) $\mathrm{Curv}^k_p(X)$ is an irreducible quasi-projective variety of dimension $k-1$ which is an open dense subset in $\Hilb^k_p(X)$ and therefore its closure is the full punctual Hilbert scheme at $p$. However, if $n\ge 3$, the punctual Hilbert scheme $\Hilb^k_p(X)$ is not necessarily irreducible or reduced, and it has smoothable and non-smoothable components. The closure of the curvilinear locus is one of its smoothable irreducible components as the following lemma shows. 

\begin{lemma} $\overline{\mathrm{Curv}^k_p(X)}$ is an irreducible component of the punctual Hilbert scheme $\Hilb^k_p(X)$ of dimension $(n-1)(k-1)$. 
\end{lemma}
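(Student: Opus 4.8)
The plan is to prove two independent facts about the open stratum $\mathrm{Curv}^k_p(X)$ — that it is irreducible of dimension exactly $(n-1)(k-1)$, and that it is open in $\Hilb^k_p(X)$ — and then combine them: an irreducible locally closed subset that happens to be open is automatically dense in, hence equal to, a whole irreducible component.

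First I would set up a parametrization. Take $p$ to be the origin, so $\calo_{X,p}=\CC[[x_1,\ldots,x_n]]$ with maximal ideal $\frakm$. A curvilinear subscheme supported at $p$ is the same datum as a surjection $\calo_{X,p}\twoheadrightarrow \CC[z]/z^k$, recorded by the images $x_i\mapsto \phi_i(z)=\sum_{j=1}^{k-1}a_i^{(j)}z^j\in z\CC[z]/z^k$; surjectivity is the open condition that some linear coefficient $a_i^{(1)}$ is nonzero. Two such surjections have the same kernel, hence cut out the same subscheme, exactly when they differ by post-composition with an automorphism of $\CC[z]/z^k$, i.e. by a substitution $z\mapsto \beta(z)=c_1z+\ldots+c_{k-1}z^{k-1}$ with $c_1\neq 0$. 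I would then check by a term-by-term comparison that $\phi_i(\beta(z))=\phi_i(z)$ for all $i$ (with some $a_i^{(1)}\neq 0$) forces $\beta=\mathrm{id}$, so the action of the connected $(k-1)$-dimensional group $\mathrm{Aut}(\CC[z]/z^k)$ on the open set $U=\{(\phi_i):\text{some }a_i^{(1)}\neq 0\}\subset \CC^{n(k-1)}$ is free. Since $U$ is an open subset of affine space, the quotient $\mathrm{Curv}^k_p(X)=U/\mathrm{Aut}(\CC[z]/z^k)$ is irreducible of dimension $n(k-1)-(k-1)=(n-1)(k-1)$. Equivalently, on the chart $a_1^{(1)}\neq 0$ one normalizes uniquely to $\phi_1(z)=z$, after which $\phi_2,\ldots,\phi_n$ range freely over $(z\CC[z]/z^k)^{n-1}\cong\CC^{(n-1)(k-1)}$, exhibiting the dimension directly.

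Next I would prove openness. The locus $\mathrm{Curv}^k_p(X)$ coincides with $\{\xi\in\Hilb^k_p(X):\dim_\CC \frakm_\xi/\frakm_\xi^2\le 1\}$, because a length-$k$ Artinian local $\CC$-algebra of embedding dimension $\le 1$ is a quotient of $\CC[[z]]$ and hence isomorphic to $\CC[z]/z^k$, and conversely. As the embedding dimension $\dim_\CC \frakm_\xi/\frakm_\xi^2$ is upper semicontinuous on $\Hilb^k_p(X)$, this locus is open. Finally I combine: let $Z=\overline{\mathrm{Curv}^k_p(X)}$, which by the first step is irreducible of dimension $(n-1)(k-1)$. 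If $Z\subseteq W$ for an irreducible component $W$ of $\Hilb^k_p(X)$, then $\mathrm{Curv}^k_p(X)\subseteq W$ is a nonempty open subset of $\Hilb^k_p(X)$, hence open and dense in the irreducible $W$, so $W\subseteq Z$ and $Z=W$. Thus $Z$ is an irreducible component of the asserted dimension.

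The only genuinely content-bearing steps are the freeness of the automorphism action, which is what pins the dimension down to $(n-1)(k-1)$ rather than something larger, and the identification of the curvilinear locus with the open embedding-dimension-$\le 1$ stratum. I expect the latter to be the conceptual crux: openness is precisely what upgrades ``irreducible subvariety of the right dimension'' to ``irreducible component,'' so the key point is the semicontinuity argument together with the ring-theoretic fact that embedding dimension $\le 1$ forces $\calo_\xi\cong\CC[z]/z^k$. The freeness in the dimension count is, by contrast, a short explicit computation.
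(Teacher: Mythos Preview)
Your proof is correct and structurally the same as the paper's: establish that $\mathrm{Curv}^k_p(X)$ is open in $\Hilb^k_p(X)$ and separately that it is irreducible of dimension $(n-1)(k-1)$, then combine. The paper's execution differs in both steps. For openness, rather than invoking semicontinuity of the embedding dimension, the paper characterises non-curvilinear $\xi$ by the condition $\mathfrak{m}^{k-1}\subset I_\xi$ (equivalently, $\calo_\xi$ has no element of order $k-1$), which is visibly closed; this is equivalent to your embedding-dimension criterion via the filtration $\calo_\xi\supsetneq\mathfrak{m}_\xi\supsetneq\cdots\supsetneq\mathfrak{m}_\xi^{k-1}\neq 0$, but bypasses the semicontinuity appeal. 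For the dimension, the paper simply defers to the test-jet model developed later, which realises $\mathrm{Curv}^k_p(\CC^n)$ as the quotient $J^{\reg}_k(1,n)/\mathrm{Diff}_k(1)$ --- exactly your parametrisation in dual form --- so your explicit free-action count and normalisation $\phi_1(z)=z$ amount to writing out what the paper takes for granted. Your version is thus more self-contained; the paper's is terser because it relies on machinery set up elsewhere.
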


\begin{proof}
Note that $\xi \in \Hilb^{k}_0(\CC^n)$ is not curvilinear if and only if $\calo_\xi$ does not contain elements of degree $k-1$, that is, after fixing some local coordinates $x_1,\ldots, x_n$ of $\CC^n$ at the origin we have
\[\calo_\xi \simeq \CC[x_1,\ldots, x_n]/I \text{ for some } I\supseteq (x_1,\ldots, x_n)^{k-1}.\]
This is a closed condition and therefore curvilinear subschemes can't be approximated by non-curvilinear subschemes in $\Hilb^{[k]}_0(\CC^n)$. The dimension will follow from the description of it as a non-reductive quotient in the next subsection.  
\end{proof}

\begin{definition} We call $\CHilb^k_p(X)=\overline{\mathrm{Curv}^k_p(X)}$ the punctual curvilinear component supported at $p$ and 
\[\CHilb^k(X)=\cup_{p\in X} \CHilb^k_p(X)\]
is the curvilinear Hilbert scheme which is the fibrewise closure of the curvilinear locus. 
\end{definition}

The natural torus action on $\CC^n$ induces an action on these Hilbert schemes and the curvilinear component $\CHilb^k_0(\CC^n)$ is distinguished among the smoothable component of $\Hilb^k_0(\CC^n)$ because it conjecturally contains all torus fixed points of $\Hilb^k(\CC^n)$. 

\begin{conjecture}\label{curvfixedpoint}
All torus fixed points on $\Hilb^k(\CC^n)$ sit in the punctual curvilinear component $\CHilb^k_0(\CC^n)$.
\end{conjecture}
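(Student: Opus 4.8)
The plan is to reduce the statement to a concrete degeneration problem and then attack that problem by combining Gröbner-type and non-reductive quotient techniques. Since the $T=(\CC^*)^n$-fixed points of $\Hilb^k(\CC^n)$ are precisely the colength-$k$ monomial ideals $I\subset\CC[x_1,\ldots,x_n]$ (all supported at the origin, as $T$ fixes only $0\in\CC^n$, so they lie in $\Hilb^k_0(\CC^n)$), and since $\CHilb^k_0(\CC^n)=\overline{\Curv^k_0(\CC^n)}$ is a closed $T$-invariant subvariety, it suffices to prove that every such $I$ is a flat limit of curvilinear subschemes. Scaling $\CC^n$ by a one-parameter subgroup $\lambda$ of $T$ carries smooth curve germs to smooth curve germs, hence curvilinear schemes to curvilinear schemes, so it is enough to exhibit, for each $I$, a curvilinear ideal $J$ and a cocharacter $\lambda$ with $\lim_{t\to0}\lambda(t)\cdot J=I$; equivalently, $I$ should occur as the initial ideal $\mathrm{in}_w(J)$ for a weight $w\in\ZZ^n$ and a curvilinear $J$.

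First I would assemble the scaffolding and the base cases. For $n=2$ there is nothing to prove: the excerpt already records that $\CHilb^k_0(\CC^2)=\Hilb^k_0(\CC^2)$ contains every fixed point. For general $n$ the natural first attempt is to use curve germs adapted to the target staircase. Given the standard monomials of $I$, I would parametrize a curve $\gamma$ with $x_1\mapsto z$ and $x_i\mapsto z^{v_i}+(\text{higher order})$, so that $J=\{f:f\circ\gamma\equiv0\bmod z^k\}$ is curvilinear of length $k$, and then compute $\mathrm{in}_w(J)$ for a generic cocharacter $w$ near the valuation vector $v$. The goal is to tune the higher-order coefficients of $\gamma$ together with $w$ so that the standard monomials of $\mathrm{in}_w(J)$ are exactly those of $I$. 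Flatness and preservation of colength $k$ are automatic for initial degenerations, so the entire content is reduced to this matching of staircases.

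The hard part — and, I suspect, the reason this is stated as a conjecture rather than a theorem — is precisely this matching step when $n\ge3$. For monomial curves $\gamma_i=z^{v_i}$ one only reaches the monomial ideals that are initial ideals of numerical-semigroup curve ideals, a proper subclass, so all the remaining combinatorial freedom must be carried by the higher-order terms, and it is far from clear that every staircase is reachable. A promising reformulation asks whether every staircase admits an ordering $1=m_0,m_1,\ldots,m_{k-1}$ of its standard monomials compatible with a curvilinear degeneration — a Gray-code / lattice-path condition that holds automatically for $n=2$. Producing such an ordering, or a curve realizing a prescribed staircase, is the crux; I would expect to need an induction along the filtration by powers of $\frakm$, peeling off the socle layer $\frakm^{\mathrm{top}}$ and lifting a curvilinear degeneration of the truncation. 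The subtlety to respect throughout is that a monomial ideal may lie on a non-smoothable component (of Iarrobino type) while still — conjecturally — lying on the curvilinear one, so the extra symmetry of monomial points is doing essential work and a naive dimension count cannot settle the question.

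An alternative and complementary line of attack is to work inside the non-reductive quotient model of $\CHilb^k_0(\CC^n)$ from \cite{bercziG&T, berczitau2}, where curvilinear schemes are parametrized by regular $k$-jets of curves modulo the reparametrization group $\mathrm{Diff}_k$. In the open part of that model the visible $T$-fixed points are the monomial curvilinear ideals coming from monomial curves, so the remaining fixed points of $I$ must appear on the boundary of the compactified jet quotient. I would analyze this boundary via the Bialynicki--Birula stratification of $\Hilb^k_0(\CC^n)$ for a generic $\lambda$: the task becomes showing that the closure of the open curvilinear locus sweeps out, as limit points, every monomial ideal. The principal obstacle is unchanged — controlling which fixed points lie in the closure of the curvilinear locus — but the jet model has the advantage that the torus weights at each fixed point are computable explicitly, which should at least yield a complete verification for small $k$ and $n$ and, optimistically, an inductive scheme in general.
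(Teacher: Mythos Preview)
The statement you are attempting to prove is labeled a \emph{conjecture} in the paper, and the paper provides no proof; it is stated as an open problem and used only heuristically (e.g., to motivate the curvilinear component as the ``right'' locus and to formulate the analogous Conjecture for geometric subsets). So there is no paper proof to compare your proposal against.

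Your proposal is not a proof either, and you are candid about this: the entire content is reduced to the ``matching of staircases'' step, and you explicitly say ``it is far from clear that every staircase is reachable'' and that ``the principal obstacle is unchanged'' in the alternative approach. What you have written is a sensible research plan --- correct reduction to an initial-degeneration statement, correct identification of the $n=2$ base case, correct observation that monomial curves alone are insufficient --- but the key step (exhibiting, for every colength-$k$ monomial ideal in $n\ge 3$ variables, a curvilinear ideal degenerating to it) remains entirely open in your write-up, exactly as it does in the paper. Until that step is supplied, the conjecture stands.
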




\begin{remark}\label{remark:embed}
Fix coordinates $x_1,\ldots, x_n$ on $\CC^n$. Recall that the defining ideal $I_\xi$ of any subscheme $\xi \in \Hilb^{k+1}_0(\CC^n)$ is a codimension $k$ subspace in the maximal ideal $\mathfrak{m}=(x_1,\ldots, x_n)$. The dual of this is a $k$-dimensional subspace $S_\xi$ in $\mathfrak{m}^*\simeq \symdot$ giving us a natural embedding $\varphi: \Hilb^{k+1}(X)_p \hookrightarrow \grass_k(\symdot)$. In what follows, we give an explicit  parametrization of this embedding using an algebraic model coming from global singularity theory.  
\end{remark}

\section{Geometric subsets and their curvilinear components}\label{sec:geomsubsets}

Following Rennemo \cite{rennemo} we define punctual geometric subsets to be the constructible subsets of the punctual Hilbert scheme containing all $0$-dimensional schemes of given isomorphism types.
\begin{definition} A type is a constructible subset $Q\subseteq \Hilb^k_0(\CC^n)$ which is the union of isomorphism classes of subschemes, that is, if $\xi \in Q$ and $\xi' \in \Hilb^k_0(\CC^n)$ are isomorphic schemes then $\xi' \in Q$. 
\end{definition}

\begin{definition} For an $s$-tuple $\mathbf{Q}=(Q_1,\ldots, Q_s)$ of types such that $Q_i\subseteq \Hilb^{k_i}_0(\CC^n)$ and $k=\sum k_i$ define the subset of type $\mathbf{Q}$ as 
\[\mathcal{H}^\mathbf{Q}(X)=\{\xi \in \Hilb^k(X):\xi=\xi_1 \sqcup  \ldots \sqcup \xi_s \text{ where } \xi_i\in X^{[k_i]}_{p_i}\cap Q_i \text{  for distinct } p_1,\ldots, p_s\}\subseteq \Hilb^k(X).\]
A subset of $\Hilb^k(X)$ is geometric if it can be expressed as finite union, intersection and complement of sets of the form $\mathcal{H}^\mathbf{Q}$. We let 
\[\Hilb^{\mathbf{Q}}(X)=\overline{\mathcal{H}^\mathbf{Q}(X)}\]
denote the closure in $\Hilb^k(X)$.
\end{definition}

A straightforward way to produce types is by taking a complex algebra $A$ of complex dimension $k$ on $n$ generators and define the corresponding punctual set
\[Q_A=\{\xi \in \Hilb^k_0(\CC^n): \calo_\xi \simeq A\}.\]  
In this paper we will focus on monomial types and geometric subsets where the algebra $A$ is defined by a monomial ideal. 

\begin{definition} Let $A_1,\ldots, A_s$ be finite dimensional algebras on $n$ generators. The locus of type $\bA=(A_1,\ldots, A_s)$ is 
\[\mathcal{H}^{A_1,\ldots, A_s}(X)=\{\xi\in \Hilb^k(X)|\xi=\xi_1\sqcup \ldots \sqcup \xi_s, \calo_{\xi_i} \simeq A_i\}\]
We call its closure 
\[\Hilb^{A_1,\ldots, A_s}(X)=\overline{\mathcal{H}^{A_1,\ldots, A_s}(X)}\]
the $\bA$-Hilbert scheme. If for $1\le i\le s$ $A_i=\CC[x_1,\ldots, x_n]/I_{\lambda_i}$ for some monomial ideal $I_{\lambda_i}$ indexed by the $n$-diimensional Young tableau $\lambda_i$, then we call this a monomial geometric subset, and also use the notation $\Hilb^{\l_1,\ldots, \l_s}(X)$. 
\end{definition}

We follow the French convention in drawing Young-diagrams, e.g in dimension $2$ for $\lambda=(\lambda_1 \ge \ldots \ge \l_n)$ we stack each row on top of the previous one,  e.g.  $\mu=(2,1)$ corresponds to $\ytableausetup{smalltableaux} \ydiagram{1,2}$.

\begin{example}\label{example:monsubsets} \begin{enumerate} 
\item The geometric component $\GHilb^k(X)$ is itself a monomial geometric subset; indeed, it corresponds to the trivial partitions $\l_1=\ldots =\l_k=(1)$ and the corresponding monomial ideal is the maximal ideal $(x_1,\ldots, x_n)$:
\[\GHilb^k(X)=\GHilb^{(1),\ldots, (1)}(X).\]
\item The Xorin algebra $A_k=\CC[z]/z^{k}$ is monomial corresponding to $\l=(k)=\underbrace{\ydiagram{4}}_k$. The punctual set $Q_A=\Curv^k_0(\CC^n)$ is the curvilinear locus in $\Hilb^k_0(\CC^n)$ and the corresponding closed set $\Hilb^{A_k}(X)=\CHilb^k(X)$ is the curvilinear component. 
\item The G\"ottsche geometric subset corresponds to the square of the maximal ideal $\mathfrak{m}^2 \subset \CC[x_1,\ldots, x_n]$ at each point of the support.  On a surface $S$ this means that $\lambda_1=\ldots =\lambda_s=\ytableausetup{smalltableaux} \ydiagram{1,2}$ and $A=\CC[x,y]/\mathfrak{m}^2=\CC[x,y]/(x^2,xy,y^2)$. Then $\Hilb^{s \cdot A}(S) \subset \Hilb^{3s}(S)$ is a $2s$-dimensional closed variety and the count of $s$-nodal curves in the $5s$-ample linear system $\PP(|L|)$ is 
$\int_{P(s\cdot \lambda)} c_{2s}(L^{[3s]})$, see \cite{gottsche}.
\end{enumerate}
\end{example} 

\subsection{Curvilinear components of geometric subsets}
 
Let $\Hilb^{\l_1,\ldots, \l_s}(X)$ be the monomial geometric subset defined by the partitions $\l_1,\ldots, \l_s$. In this section we study the the punctual geometric subset 
 \[\Hilb^{\l_1,\ldots, \l_s}_0(X)=\Hilb^{\l_1,\ldots, \l_s}(X) \cap \Hilb^{k}_0(X)\]
which consists of subschemes supported at some point of $X$. When we fix the support to be a specific point $p \in X$, we write 
\[\Hilb^{\l_1,\ldots, \l_s}_p(X) \simeq \Hilb^{\l_1,\ldots, \l_s}_{\{0\}}(\CC^n)\]
The components of $\Hilb^{\l_1,\ldots, \l_s}_{\{0\}}(\CC^n)$ are often called smoothable components. Their description is far beyond of our current knowledge on Hilbert schemes. 

The goal of this section is to define a distinguished component of $\Hilb^{\l_1,\ldots, \l_s}(X)$ which contains all subschemes which arise when the support of a generic point collides along a smooth curve. It is a multisingularity version of the curvilinear component $\CHilb^{k}(X)$ and hence we will call this the curvilinear geometric subset. 

Let 
\[\mathcal{G}(\l_1,\ldots, \l_s) \subset  \Hilb^{\l_1,\ldots, \l_s}(X) \times \Hilb^{s}(X)\]
denote the closure of the graph of the support map 
\[\mathrm{supp}: \mathcal{H}^{\l_1,\ldots, \l_s}(X) \to \Hilb^{s}(X),  \xi \mapsto \mathrm{supp}(\xi).\]
We denote by $\pi_{\l_1.\ldots, \l_s}$ and $\pi_{[s]}$ the projections of $\mathcal{G}(\l_1,\ldots, \l_s)$ to the first and second factor. Note that 
\begin{itemize}
\item The image of $\pi_{[s]}$ is the main component $\GHilb^s(X)$, and the fibre $\pi_{[s]}^{-1}(\eta)$ over a point $\eta \in \GHilb^s(X)$ consists of all subschemes $\xi \in \Hilb^{\l_1,\ldots, \l_s}(X)$ which arises as a limit $\xi=\lim_{i\to \infty} \xi_i$ for some sequence $\xi_i \in \mathcal{H}^{\l_1,\ldots, \l_s}(X)$ satisfying $\lim_{i\to \infty}\mathrm{supp}(\xi_i)=\eta$. 
\item $\pi_{\l_1.\ldots, \l_s}$ is surjective and the fibre $\pi_{\l_1.\ldots, \l_s}^{-1}(\xi)$ over a point $\xi \in R_p$ consists of all subschemes $\eta \in \Hilb^{s}(X)$ which arises as a limit $\eta=\lim_{i\to \infty} \mathrm{supp}(\xi_i)$ for some sequence $\xi_i \in \mathcal{H}^{\l_1,\ldots, \l_s}(X)$ whose limit is $\xi$.  
\end{itemize}

The following conjecture is in fact a corollary of Conjecture \ref{curvfixedpoint} and the fact that $\pi_{[s]}$ is torus-equivariant and therefore the image of a fixed-point is fixed in $\CHilb^{[k]}_0(\CC^n)$. 

\begin{conjecture}
The torus fixed points of $\Hilb^{\l_1,\ldots, \l_s}(\CC^n) \subset \Hilb^{[k]}(\CC^n)$ sit in the set 
\[\pi_{[s]}^{-1}(\CHilb_0^{[s]}(\CC^n)).\]  
\end{conjecture}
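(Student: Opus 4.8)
The plan is to deduce this statement directly from Conjecture \ref{curvfixedpoint} by exploiting the torus-equivariance of the correspondence $\mathcal{G}(\l_1,\ldots,\l_s)$ together with a fixed-point lifting argument. The claim should be read in the correspondence $\mathcal{G}(\l_1,\ldots,\l_s)$: for every torus fixed point $\xi$ of $\Hilb^{\l_1,\ldots,\l_s}(\CC^n)$ one wants to produce a lift $(\xi,\eta)\in \mathcal{G}(\l_1,\ldots,\l_s)$ with $\eta\in\CHilb_0^{s}(\CC^n)$, i.e. $(\xi,\eta)\in\pi_{[s]}^{-1}(\CHilb_0^{s}(\CC^n))$.

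First I would record that every torus fixed point $\xi$ of $\Hilb^{\l_1,\ldots,\l_s}(\CC^n)$ is a monomial subscheme supported at the origin, and hence lies in $\Hilb^k_0(\CC^n)$. Indeed, the diagonal $(\CC^*)^n$-action on $\CC^n$ fixes only the origin, so a $0$-dimensional subscheme is torus-fixed precisely when its defining ideal is monomial. Next I would lift $\xi$ into the correspondence. Since $\pi_{\l_1,\ldots,\l_s}$ is surjective, the fibre $\pi_{\l_1,\ldots,\l_s}^{-1}(\xi)$ is nonempty; because $\mathcal{G}(\l_1,\ldots,\l_s)$ is the closure of a torus-invariant graph it is torus-invariant, and as $\xi$ is fixed the torus acts on this fibre through the second coordinate $\eta$. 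Any such $\eta$ has $\supp(\eta)\subseteq\supp(\xi)=\{0\}$, so $\eta\in\Hilb^s_0(\CC^n)$, which is projective; thus the fibre is a nonempty projective torus-variety and, by the Borel fixed-point theorem, contains a torus fixed point $(\xi,\eta_0)$.

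Finally I would invoke the conjecture. The second coordinate $\eta_0$ is a torus fixed point of $\Hilb^s(\CC^n)$ (in fact of $\GHilb^s_0(\CC^n)$, since the image of $\pi_{[s]}$ is $\GHilb^s$). Applying Conjecture \ref{curvfixedpoint} with $k$ replaced by $s$, every such fixed point lies in the punctual curvilinear component $\CHilb_0^{s}(\CC^n)$, so $\eta_0\in\CHilb_0^{s}(\CC^n)$ and therefore $(\xi,\eta_0)\in\pi_{[s]}^{-1}(\CHilb_0^{s}(\CC^n))$, which is the assertion. This reproduces the author's remark that $\pi_{[s]}$ is equivariant and sends a fixed point to a fixed support lying in the curvilinear locus.

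The main obstacle is that the statement is genuinely conditional: all the geometric input above — surjectivity of $\pi_{\l_1,\ldots,\l_s}$, equivariance of $\mathcal{G}(\l_1,\ldots,\l_s)$, projectivity of the punctual fibre, and existence of a torus-fixed lift — is elementary, while the substance is entirely absorbed into Conjecture \ref{curvfixedpoint}. An unconditional proof would require classifying exactly which monomial ideals in $\Hilb^s(\CC^n)$ belong to $\CHilb_0^{s}(\CC^n)$, which is precisely the open problem left above; so I would expect the most one can honestly present here is this reduction, unless the underlying conjecture is established in low $s$ or small $n$ where the torus fixed points can be enumerated directly.
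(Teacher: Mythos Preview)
Your proposal is correct and follows essentially the same approach as the paper, which states only that the conjecture ``is in fact a corollary of Conjecture~\ref{curvfixedpoint} and the fact that $\pi_{[s]}$ is torus-equivariant and therefore the image of a fixed-point is fixed.'' Your write-up is in fact more careful than the paper's one-line remark: you make explicit the Borel fixed-point lifting step needed to pass from a fixed point $\xi\in\Hilb^{\l_1,\ldots,\l_s}(\CC^n)$ to a fixed point of the correspondence $\mathcal{G}(\l_1,\ldots,\l_s)$, which the paper leaves implicit.
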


Our next goal is to 
identify a distinguished irreducible component of $\pi_{[s]}^{-1}(\CHilb^{[s]}(X)$ as a mono-geometric subset $\Hilb^{\l}(X)$ for some partition $\l$ cooked up from $\l_1,\ldots, \l_s$ such that $|\l|=|\l_1|+\ldots +|\l_s|$.  

\begin{definition}[Well-oriented partitions]\label{partialorder}
Fix a basis of $\CC^n$. We associate to a partition $\l$ consisting of $k$ boxes in $\CC^n$ a sequence $r(\l)=(r_1,\ldots, r_n)$ of integers defined as 
\[r_j(\l)=\max_{(i_1,\ldots, i_n) \in \l} i_j =\text{Length of $\l$ in the } j \text{th coordinate direction}\]
We call $\l$ well-oriented if $r_1\ge \ldots \ge r_n$ holds and in this case we call $r_1$ the length of $\l$, denoted by $l(\l)$. Any partition can be moved into a well-oriented position using reflections.  
\end{definition}




\begin{definition}[Sum of partitions along the first axis]\label{addition} Let $\l$ be $n$-dimensional partition. For fixed $(i_2,\ldots, i_n)$ let 
\[\l^{(i_2,\ldots, i_n)}=\max\{m: (m,i_2,\ldots, i_n) \in \l\}\]
denote its length along the slice $(i_2,\ldots, i_n)$. We define the sum of the $n$-dimensional partitions $\l_1,\ldots, \l_s$ along the first axis by setting 
\[(\l_1+_1 \ldots +_1 \l_s)^{(i_2,\ldots, i_n)}=\l_1^{(i_2,\ldots, i_n)}+\ldots +\l_s^{(i_2,\ldots, i_n)}.\]
\end{definition}

\begin{definition}[Curvilinear sum of partitions] Let $\l_1,\ldots, \l_s$ be $n$-dimensional partitions. Their curvilinear sum, denoted by simply $\l_1+\ldots +\l_s$ is their sum along the first axis after moving all of them into well-oriented position. This addition is clearly commutative and associative.
\end{definition}

\begin{example} The curvilinear sum of $\l_1=\ldots =\l_s=(1)$ is $(1)+\ldots +(1)=s\cdot (1)=(s)$. Schematically:
\[s \cdot \ydiagram{1} = (s)= \underbrace{\ydiagram{4}}_{s}.\]
The curvilinear sum of the G\"ottsche partitions $\l_1=\ldots =\l_s=(2,1)$ is $(2,1)+\ldots +(2,1)=s\cdot (2,1)=(2s,s)$:
\[s \cdot \ydiagram{1,2} = (2s,s)= \underbrace{\ydiagram{3, 6}}_{2s}.\]
\end{example}

Because of the analogy with curvilinear components, we introduce the following terminology.

\begin{definition}[Curvilinear geometric subsets]\label{def:curvgeometricsubset}
We call the component $\Hilb^{\l_1+\ldots +\l_s}(X)$ the curvilinear component of the geometric subset $\Hilb^{\l_1,\ldots, \l_s}(X)$. With fixed support $p\in X$ we call $\Hilb^{\l_1+\ldots +\l_s}_p(X)$ the curvilinear component supported at $p$.
\end{definition}

An example where we can prove this conjecture is the case when the types $Q_{\l_i}=\mathcal{H}^{\l_i}(\CC^n)$ are zero dimensional for $i=1,\ldots, s$. 
Lemma \ref{curvfixedpoint} gives a natural analog of Conjecture \ref{curvfixedpoint} for regular geometric subsets. Namely, for regular geometric subsets all fixed points sit in $\Hilb^{\lambda_1+ \ldots + \lambda_s}(\CC^n)$.


The following characterisation of the curvilinear locus plays a crucial role in our argument. In fact, this is one of the main geometric features of the curvilinear locus why our construction works.
\begin{lemma}\label{characterisation} Let $\calo_p$ denote the local ring at $p\in X$ with maximal ideal $\mathfrak{m}_p \subset \calo_p$. Let $\xi \in \Hilb^{\l_1,\ldots, \l_s}_p(X)$ be a point in the geometric subset such that $\mathrm{supp}(\xi)=p$. Then 
\[\xi \in \calH^{\l_1+\ldots +\l_s}_p(X) \Leftrightarrow \calo_\xi/\mathfrak{m}_p^{l(\l_1)+\ldots +l(\l_s)} \nsubset \calo_\xi\]
In other words, if $I_\xi \subset \mathfrak{m}_p$ denotes the ideal of $\xi$ in some local coordinates at $p$ then 
\[\xi \in \calH^{\l_1+\ldots +\l_s}_p(X) \Leftrightarrow \mathfrak{m}_p^{l(\l_1)+\ldots +l(\a_s)}\nsubset I_\xi,\]
that is, there is a homogeneous degree $l(\l_1)+\ldots +l(\a_s)$ polynomial which is not in $I_{\xi}$. 
\end{lemma}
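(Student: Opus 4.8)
The plan is to work locally and reduce the statement to one about the ideal $I_\xi \subset \CC[x_1,\ldots,x_n]$ of a point $\xi$ of the punctual geometric subset $\Hilb^{\l_1,\ldots,\l_s}_{\{0\}}(\CC^n)$. Writing $L=l(\l_1)+\ldots+l(\l_s)$, the first thing I would record is the additivity $L=l(\l_1+\ldots+\l_s)$: the curvilinear sum is formed along the first axis, which by well-orientation is the longest direction, so lengths along that axis add. Thus $L$ is the length of the sum partition along its first coordinate direction, the diagram of $A_{\l_1+\ldots+\l_s}$ contains the full first-axis segment $1,x_1,x_1^2,\ldots$, and the condition $\frakm_p^{L}\nsubset I_\xi$ reads as the open condition that $\calo_\xi$ carries a nonzero element of top first-axis degree.

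For the forward implication I would simply inspect the model. If $\calo_\xi\simeq A_{\l_1+\ldots+\l_s}$, then because the sum partition is well-oriented its diagram contains the box realizing the top first-axis monomial, which is therefore a nonzero top-degree element of $\calo_\xi$ and witnesses $\frakm_p^{L}\nsubset I_\xi$. This direction also uses that $\calH^{\l_1+\ldots+\l_s}_0(\CC^n)$ is genuinely contained in $\Hilb^{\l_1,\ldots,\l_s}_0(\CC^n)$, which is exactly the defining property of the curvilinear sum in Definition \ref{def:curvgeometricsubset}: a generic point of $\Hilb^{\l_1+\ldots+\l_s}$ arises as a limit of configurations in $\calH^{\l_1,\ldots,\l_s}$ colliding along a smooth curve.

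The substance is the reverse implication: given $\xi\in\Hilb^{\l_1,\ldots,\l_s}_0(\CC^n)$ with $\frakm_p^L\nsubset I_\xi$, force $\calo_\xi\simeq A_{\l_1+\ldots+\l_s}$. I would split this into an upper bound and a rigidity step. For the bound, every such $\xi$ is a flat limit of disjoint unions $\xi_1\sqcup\ldots\sqcup\xi_s$ with $\calo_{\xi_j}\simeq A_{\l_j}$, and each well-oriented piece $\xi_j$ admits a curvilinear quotient of length at most $l(\l_j)$ in any direction; I would argue that as the supports collide, alignment of the pieces along a common smooth curve germ can contribute at most $\sum l(\l_j)=L$ to the first-axis filtration, so the top first-axis degree of any limit is bounded by the sum-partition value and $\frakm_p^L\nsubset I_\xi$ forces this extremal value to be attained. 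For rigidity, attaining the extremal first-axis length means the $s$ pieces are maximally spread along one smooth curve, each contributing its full length $l(\l_j)$; the transverse profile of the flat limit is then the first-axis superposition of the $\l_j$, i.e.\ precisely the diagram $\l_1+\ldots+\l_s$, and matching monomial bases yields $\calo_\xi\simeq A_{\l_1+\ldots+\l_s}$.

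The main obstacle is the upper bound, because the relevant invariant (top first-axis degree, equivalently the socle degree) is only lower semicontinuous in flat families and can jump up exactly at the collision limit, so it cannot be controlled by its value on the generic disjoint-union locus. The route I would adopt is to replace the abstract degeneration by the explicit curvilinear model foreshadowed in Remark \ref{remark:embed} and built in the next subsection as a non-reductive quotient: there a colliding configuration is parametrized by a smooth curve germ together with the placement of the $\l_j$-pieces along it, the limit ideal can be written down directly, and one reads off that an aligned collision realizes first-axis length exactly $L$ while every non-aligned specialization is strictly shorter. Applying semicontinuity only inside this explicit family then delivers both the bound and the uniqueness of the extremal isomorphism type, completing the reverse implication.
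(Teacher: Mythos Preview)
The paper's own proof is literally one sentence: ``This automatically follows from the definition. This is why we add the partitions along their longest direction, i.e in well-oriented position.'' The author treats the equivalence as built into the construction of the curvilinear sum and gives no further argument for either direction.

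Your proposal is considerably more careful. You correctly separate the two implications, and your forward direction is clean and matches the spirit of the paper: the sum partition is designed so that its monomial algebra contains an element of degree $L$, and any algebra isomorphism carries the maximal-ideal filtration to the maximal-ideal filtration. You also correctly flag that the reverse implication is the substantive one, and that the nilpotency order is only lower semicontinuous in flat families, so it cannot be bounded from above by its generic value. This is a genuine issue the paper glosses over. However, your proposed remedy --- passing to the explicit test-jet model of \S\ref{subsec:testjetmodel} --- does not close the gap as stated. That model parametrizes the locus $\calH^{\l_1+\ldots+\l_s}$ itself (subschemes arising from a single smooth $d$-germ), whereas the reverse implication hands you an \emph{arbitrary} punctual $\xi$ in the closure $\Hilb^{\l_1,\ldots,\l_s}_p$; such a $\xi$ need not a priori lie in the image of any curve-collision family, so arguing ``inside this explicit family'' and invoking semicontinuity there does not control it. In effect, your final paragraph assumes exactly what needs to be shown --- that every punctual limit with $\mathfrak m^L\nsubset I_\xi$ comes from an aligned-curve degeneration. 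Both your argument and the paper's one-line proof therefore leave the reverse implication as an assertion; the difference is that you have at least isolated precisely where the difficulty lies.
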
 
\begin{proof}
This automatically follows from the definition. This is why we add the partitions along their longest direction, i.e in well-oriented position.
\end{proof}

\begin{corollary}\label{prop:components} For arbitrary $n$-dimensional partitions $\lambda_1,\ldots, \lambda_s$ of size $k_1,\ldots, k_s$ with $k=k_1+\ldots +k_s$ and $p\in X$ we have
\begin{enumerate} 
\item  The punctual geometric set $\Hilb^{\lambda_1+\ldots +\lambda_s}_{p}{X}$ is an irreducible component of $\Hilb^{\l_1,\ldots, \l_s}_p(X)$. 
\item The codimension of $\Hilb^{\l_1+\ldots +\l_k}(X)$ in $\Hilb^{\l_1,\ldots, \l_k}(X)$ is equal to $s-1$, the size of the support minus $1$. 
\end{enumerate}
\end{corollary}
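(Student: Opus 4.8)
The plan is to prove the two assertions separately, using Lemma \ref{characterisation} for the component claim (1) and the support fibration for the codimension claim (2). Throughout write $\lambda=\lambda_1+\ldots+\lambda_s$, $A_\lambda=\CC[x_1,\ldots,x_n]/I_\lambda$, and $L=l(\lambda_1)+\ldots+l(\lambda_s)=l(\lambda)$.

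\textbf{Part (1).} First observe that $\calH^{\lambda}_p(X)$, the locus of $\xi$ supported at $p$ with $\calo_\xi\simeq A_\lambda$, is a single orbit $Q_{A_\lambda}$ of the group $\mathrm{Diff}_n$ of local coordinate changes fixing $p$: any algebra isomorphism $\calo_\xi\to\calo_{\xi'}$ lifts to an automorphism of the complete local ring carrying $I_\xi$ to $I_{\xi'}$. Since $\mathrm{Diff}_n$ is connected (its linear part $\GL_n$ and its pro-unipotent part are both connected), $Q_{A_\lambda}$ is irreducible, hence so is $\Hilb^{\lambda}_p(X)=\overline{Q_{A_\lambda}}$. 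That $\calH^{\lambda}_p(X)\subseteq\Hilb^{\lambda_1,\ldots,\lambda_s}_p(X)$ is exactly the motivation for the curvilinear sum: placing subschemes of types $\lambda_1,\ldots,\lambda_s$ along a smooth curve and letting them collide produces, in the flat limit, a subscheme of type $\lambda$, because lengths add slice-by-slice along the collision axis (Definition \ref{addition} together with well-orientation).

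\textbf{Componenthood.} To upgrade containment to componenthood, combine this with Lemma \ref{characterisation}, which identifies
\[\calH^{\lambda}_p(X)\cap\Hilb^{\lambda_1,\ldots,\lambda_s}_p(X)=\{\xi\in\Hilb^{\lambda_1,\ldots,\lambda_s}_p(X):\mathfrak{m}_p^{L}\nsubset I_\xi\}.\]
The right-hand side is the complement inside $\Hilb^{\lambda_1,\ldots,\lambda_s}_p(X)$ of the closed condition $\mathfrak{m}_p^{L}\subseteq I_\xi$, hence it is open in $\Hilb^{\lambda_1,\ldots,\lambda_s}_p(X)$; by the containment above it equals the irreducible set $Q_{A_\lambda}$. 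Thus $Q_{A_\lambda}$ is a nonempty irreducible open subset of $\Hilb^{\lambda_1,\ldots,\lambda_s}_p(X)$, and a nonempty irreducible open subset is dense in a unique irreducible component. Its closure $\Hilb^{\lambda}_p(X)$ is therefore an irreducible component, proving (1).

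\textbf{Part (2).} For the codimension I would read off both dimensions from the graph $\mathcal{G}(\lambda_1,\ldots,\lambda_s)$ via its support projection $\pi_{[s]}$ onto $\GHilb^s(X)$; since $\pi_{\lambda_1,\ldots,\lambda_s}$ is birational onto $\Hilb^{\lambda_1,\ldots,\lambda_s}(X)$ (the support is recovered from a subscheme with distinct support points), dimensions may be computed on $\mathcal{G}$. Over the open locus of $s$ distinct points the fibre of $\pi_{[s]}$ is $\prod_i\Hilb^{\lambda_i}_{p_i}(X)$, of dimension $\sum_i(\dim\Hilb^{\lambda_i}(X)-n)$; adding $\dim\GHilb^s(X)=sn$ gives $\dim\Hilb^{\lambda_1,\ldots,\lambda_s}(X)=\sum_i\dim\Hilb^{\lambda_i}(X)$. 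The curvilinear component lies over the curvilinear locus $\CHilb^s(X)$, which by the dimension of the curvilinear Hilbert scheme (the preceding Lemma) has $\dim\CHilb^s(X)=n+(n-1)(s-1)=sn-(s-1)$, i.e. codimension $s-1$ in $\GHilb^s(X)$. Granting that $\pi_{[s]}$ remains equidimensional over $\CHilb^s(X)$, with the same fibre dimension $\sum_i(\dim\Hilb^{\lambda_i}(X)-n)$, we obtain
\[\dim\Hilb^{\lambda}(X)=\dim\CHilb^s(X)+\sum_i(\dim\Hilb^{\lambda_i}(X)-n)=\sum_i\dim\Hilb^{\lambda_i}(X)-(s-1),\]
and subtracting yields codimension $s-1$. (As a check, $\lambda_i=(1)$ recovers $\mathrm{codim}(\CHilb^k\subset\GHilb^k)=k-1$.)

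\textbf{Main obstacle.} The genuine content of (2) is the equidimensionality of $\pi_{[s]}$ over the curvilinear locus: a priori the fibre over a maximally degenerate single-point support could jump, which would make the curvilinear component larger or smaller than expected and spoil the count. What saves it is that collision is prescribed along a \emph{smooth} one-dimensional curve, so the limiting subscheme is governed by the curve germ together with the unchanged internal $\lambda_i$-data, matching the generic fibre dimension; equivalently, one computes $\dim\overline{Q_{A_\lambda}}$ as the dimension of the $\mathrm{Diff}_n$-orbit of $I_\lambda$ through the non-reductive quotient/test-curve model of the following subsection. Establishing this no-jump statement — rather than the elementary dimension arithmetic — is the technical heart of the corollary, and is exactly where Part (1) alone is insufficient, since componenthood does not by itself pin down the dimension.
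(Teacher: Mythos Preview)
Your Part (1) is essentially the paper's argument: irreducibility via the orbit description (the paper phrases it as closure of a $\GL(N)$-orbit from the test-jet model of \S\ref{subsec:testjetmodel}, you use the equivalent $\mathrm{Diff}_n$-orbit), and componenthood via Lemma~\ref{characterisation}, which shows the curvilinear locus is open in $\Hilb^{\lambda_1,\ldots,\lambda_s}_p(X)$ because its complement is the closed condition $\mathfrak{m}_p^L\subseteq I_\xi$. Your write-up is slightly more explicit than the paper's but follows the same line.

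For Part (2) the paper takes a different route: it does not go through the support fibration $\pi_{[s]}$ at all, but defers directly to \S\ref{subsec:dim}, where $\dim\Hilb^{\lambda}(\CC^n)$ is computed intrinsically from the test-jet model as $\dim J^{\reg}_\lambda(d,n)-\dim\mathrm{Stab}(\phi)$, and the identity $\dim\Hilb^{\lambda_1+\ldots+\lambda_s}=\sum_i\dim\Hilb^{\lambda_i}-(s-1)$ is then a combinatorial consequence of how the stabiliser behaves under the curvilinear sum of partitions. Your fibration heuristic recovers the same numerics, but the equidimensionality of $\pi_{[s]}$ over $\CHilb^s(X)$ that you flag as the ``main obstacle'' is genuinely extra input: the identification of $\Hilb^{\lambda}(X)$ with $\pi_{[s]}^{-1}(\CHilb^s(X))$ is only posed as a question in the paper (the Remark after Definition~\ref{def:regular}), so your route demands something the paper's direct computation avoids. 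Since you correctly point to the non-reductive quotient/test-curve model as the place where the dimension is actually pinned down, your sketch is compatible with the paper's, just organized around a more ambitious (and partly conjectural) picture of the support map.
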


\begin{proof} For (1) we will see in \S \ref{subsec:testjetmodel} that $\Hilb^{\l_1+\ldots +\l_s}_p(X)$ is the closure of a $\GL(N)$-orbit for some $N$ and therefore irreducible. On the other hand, by Lemma \ref{characterisation}, the set of non-curvilinear subschemes in $\Hilb^{\l_1,\ldots ,\l_s}_p(X)$ is closed.
We will prove the codimension part (2) in \S\ref{subsec:dim} after introducing a suitable algebraic model for monomial geometric subsets. This model will provide a combinatorial characterisation of the dimension of $\Hilb^{\l}(\CC^n)$ for any partition $\l$. 
\end{proof}

We finish this section with a structural theorem on the curvilinear locus.

\begin{proposition} The generalised curvilinear locus 
\[\{\xi=\xi_1 \sqcup \ldots \sqcup \xi_m \in \Hilb^{\l_1,\ldots, \l_s}(X) :\xi_i \text{ is curvilinear for } 1\le i \le m\} \]
which consist of subschemes which is curvilinear at each point, is locally closed in $\Hilb^{\l_1,\ldots, \l_s}(X)$. 
\end{proposition}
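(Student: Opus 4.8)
The plan is to prove the stronger assertion that this locus is \emph{open} in $\Hilb^{\l_1,\ldots,\l_s}(X)$, which of course implies it is locally closed. The first step is to replace the condition ``curvilinear at each point'' by a pointwise numerical invariant. By definition the component $\xi_i$ supported at $p_i$ is curvilinear exactly when $\calo_{\xi_i}\simeq \CC[z]/z^{\ell_i}$ with $\ell_i=\mathrm{length}(\xi_i)$; for an Artinian local $\CC$-algebra this holds if and only if its maximal ideal is principal, i.e. $\dim_\CC(\mathfrak{m}_x/\mathfrak{m}_x^2)\le 1$ for the local ring $\calo_{\xi,x}$ at the support point $x=p_i$. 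Hence $\xi$ lies in the generalised curvilinear locus precisely when the Zariski cotangent space of $\xi$ has dimension at most $1$ at every point of $\supp(\xi)$, and $\xi$ fails to lie in it exactly when some support point has embedding dimension $\ge 2$.

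Next I would encode this pointwise condition globally as the fibre dimension of a coherent sheaf on the universal family. Write $H=\Hilb^{\l_1,\ldots,\l_s}(X)$ and let $\pi\colon \Xi\to H$ be the restriction to $H$ of the universal length-$k$ subscheme over $\Hilb^k(X)$; since that universal family is finite and flat of degree $k$, so is $\pi$, and in particular $\pi$ is proper. Consider the relative cotangent sheaf $\Omega_{\Xi/H}$, a coherent sheaf on $\Xi$. By base change for Kähler differentials, for a closed point of $H$ corresponding to a subscheme $\xi$ the restriction of $\Omega_{\Xi/H}$ to the fibre $\pi^{-1}(\xi)=\xi$ equals $\Omega_{\xi/\CC}$; consequently, for a closed point $x\in\Xi$ lying over $\xi$, the fibre $\Omega_{\Xi/H}\otimes\kappa(x)\simeq \mathfrak{m}_x/\mathfrak{m}_x^2$ is the Zariski cotangent space of the $0$-dimensional scheme $\xi$ at $x$. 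Thus the function $x\mapsto \dim_\CC\big(\Omega_{\Xi/H}\otimes\kappa(x)\big)$ on $\Xi$ is exactly the embedding dimension of the fibre at $x$.

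The conclusion then follows from upper semicontinuity together with a proper pushforward. The set
\[
Z=\{\,x\in\Xi:\ \dim_\CC\big(\Omega_{\Xi/H}\otimes\kappa(x)\big)\ge 2\,\}
\]
is closed in $\Xi$ by upper semicontinuity of the fibre dimension of a coherent sheaf, and since $\pi$ is proper its image $\pi(Z)\subset H$ is closed. By the translation of the first paragraph $\pi(Z)$ is exactly the set of subschemes carrying a support point of embedding dimension $\ge 2$, i.e. the complement of the generalised curvilinear locus; hence that locus is open, in particular locally closed. Conceptually this is the non-punctual upgrade of the Lemma of \S\ref{sec:curvilinear}: there one checks that with fixed support non-curvilinearity is a closed condition, and here the proper support map $\pi$ transports that closedness across collisions of support points.

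I expect the only genuine care to be needed in the two inputs to the final step. First, the base-change identity $\Omega_{\Xi/H}\otimes\kappa(x)\simeq \mathfrak{m}_x/\mathfrak{m}_x^2$ must be verified to compute the embedding dimension of the \emph{fibre} $\xi$ at $x$ rather than of the total space $\Xi$; this is where the relative, as opposed to absolute, differentials are essential. Second, one must confirm finiteness (hence properness) of $\pi$ after restricting the universal family to the closed subset $H$, which is immediate from base change of finite morphisms. It is worth noting that the naive alternative---stratifying $H$ by the number and type of support points and invoking the punctual statement on each stratum---only exhibits the locus as a finite union of sets that are locally closed within their strata, hence as a constructible set; promoting this to genuine local closedness (indeed openness) is precisely what the global cotangent-sheaf argument achieves at one stroke.
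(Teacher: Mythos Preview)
Your proof is correct and in fact proves the stronger statement that the generalised curvilinear locus is open in $\Hilb^{\l_1,\ldots,\l_s}(X)$. The paper also establishes openness, but by an entirely different route: it works with sequences and limits, showing directly that if $\xi_i\to\xi$ with $\xi$ curvilinear then all but finitely many $\xi_i$ are curvilinear. The key input there is Lemma~\ref{characterisation}, which characterises curvilinearity of a punctual limit in $\Hilb^{\l_1,\ldots,\l_s}$ via containment of a specific power $\mathfrak{m}^{l(\l_1)+\ldots+l(\l_s)}$ in the ideal; the paper then tracks how ideal-power containments behave under disjoint union and passage to the limit.

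The difference is worth noting. Your argument is intrinsic to the scheme $\xi$ itself: curvilinearity is detected by the embedding dimension $\dim\mathfrak{m}_x/\mathfrak{m}_x^2$, packaged globally as the fibre rank of $\Omega_{\Xi/H}$, and openness then drops out of semicontinuity plus properness of the universal family. This works uniformly and requires no information about the ambient geometric subset or the partitions $\l_i$. The paper's argument, by contrast, is tailored to the specific stratification by $\l_1,\ldots,\l_s$: it exploits the explicit bound on nilpotency order coming from the lengths $l(\l_i)$ and the combinatorics of how these add under collision, which is exactly the content of Lemma~\ref{characterisation}. Your approach is shorter and more robust; the paper's approach has the virtue of making visible the connection to the curvilinear sum $\l_1+\ldots+\l_s$, which is the structure the rest of the paper actually uses.
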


\begin{proof} Let $\xi_i \in \Hilb_{p_i}^{k}(\CC^n)$ and $\eta_i \in \Hilb_{q_i}^{m}(\CC^n)$ ($i=1,\ldots$) be two sequences supported at $p_i,q_i \in \CC^n$ such that  
\begin{enumerate}
\item $\lim_{i\to \infty} p_i=\lim_{i\to \infty} q_i=0 \in \CC^n$ and
\item $\mathfrak{m}_{p_i}^{N}\subset I_{\xi_i}$ for $i=1,\ldots $, $\mathfrak{m}_{q_i}^{X}\subset I_{\eta_i}$ for $i=1,\ldots $ for some  
fixed $N,X$. 
\end{enumerate}
Then $\mathfrak{m}_a^N \cdot \mathfrak{m}_b^X \subset I_{\xi_i} I_{\eta_i} \subset I_{\xi_i \sqcup \eta_i}$
holds for any $i$ and therefore the same is true for the limit:
\begin{equation}\label{limitscheme}
\mathfrak{m}_0^{N+X} \subset I_{\lim_{i\to \infty} \xi_i \sqcup \eta_i}.
\end{equation}
In particular, assume that $\xi_i \in \Hilb^{\l_1,\ldots, \l_s}(X)$ is a sequence whose limit is curvilinear:
\[\xi:=\lim_{m\to \infty} \xi \in \calH^{\l_1+\ldots +\l_s}(X).\]
Let $\xi=\xi_i^1 \sqcup \ldots \sqcup \xi_i^{s_i}$ be supported at $s_i$ points. We claim that all (but finite) $\xi_i^j$ is curvilinear. Indeed, if say, $\xi_i^1$ was not curvilinear for all $i$ then $\mathfrak{m}_{p_i}^{l(\xi_i^1)-1} \subset I_{\xi_i^1}$ would hold where $l(\xi_i^1)$ is the sum of the lengths of those $\l_j$'s whose points correspond to $\xi_i^1$. Then by \eqref{limitscheme}
\[\mathfrak{m}_0^{l(\xi_i^1)+ \ldots +l(\xi_i^{s_i})-1} = \mathfrak{m}_0^{\l_1+\ldots +\l_s-1} \subset I_{\xi_i}\]
would hold for all $i$, and therefore  Therefore the same is true for the limit point:
\[\mathfrak{m}_0^{\l_1+\ldots +\l_s-1} \subset I_{\xi}\]
By Lemma \ref{characterisation} $\xi$ would not be curvilinear. This proves the following key property of curvilinear subschemes.
\end{proof}

Finally, we fix some terminology regarding the relation of $\Hilb^{\l_1+ \ldots +\l_s}(X)$ and $\CHilb^k(X)$. 
\begin{definition}\label{def:curvilinear} We call the geometric subset $\Hilb^{A_1,\ldots, A_s}(X)$ curvilinear, if $\Hilb^{A_i}(X) \subset \CHilb^{\dim(A_i)}(X)$ for $1\le i \le s$. That is, the algebra at each point sits in the curvilinear component.
\end{definition}
By Conjecture \ref{curvfixedpoint} all monomial algebras are curvilinear.  For curvilinear tuples $(A_1,\ldots ,A_s)$ with $k=\dim(A_1)+\ldots +\dim(A_s)$ 
\[\Hilb^{A_1+\ldots +A_s}(X) \subset \CHilb^k(X)\] 
holds. 
\begin{definition}\label{def:regular} We call $(A_1,\ldots, A_s)$ regular if 
\[\Hilb^{A_1+\ldots +A_s}(X) = \CHilb^k(X) \cap \Hilb^{A_1,\ldots, A_s}(X).\] 
\end{definition}

We conjecture that being regular is not restrictive.

\begin{conjecture}
All monomial geometric subsets are regular. 
\end{conjecture}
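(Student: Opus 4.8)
The plan is to establish the two inclusions in Definition \ref{def:regular} separately. Write $\l:=\l_1+\ldots+\l_s$ for the curvilinear sum, $k=|\l_1|+\ldots+|\l_s|=|\l|$, and let $A=\CC[x_1,\ldots,x_n]/I_\l$ be the associated (monomial) sum algebra. The inclusion $\Hilb^{\l}(X)\subseteq \CHilb^k(X)\cap \Hilb^{\l_1,\ldots,\l_s}(X)$ requires nothing new: Corollary \ref{prop:components}(1) identifies $\Hilb^\l_p(X)$ as an irreducible component of the punctual geometric subset $\Hilb^{\l_1,\ldots,\l_s}_p(X)$, and sweeping over $p\in X$ gives $\Hilb^\l(X)\subseteq \Hilb^{\l_1,\ldots,\l_s}(X)$; moreover $A$ is monomial, hence curvilinear in the sense of Definition \ref{def:curvilinear} (granting Conjecture \ref{curvfixedpoint}), so $\Hilb^\l(X)\subseteq \CHilb^k(X)$ by the containment displayed after that definition.

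The substance is the reverse inclusion $\CHilb^k(X)\cap \Hilb^{\l_1,\ldots,\l_s}(X)\subseteq \Hilb^\l(X)$. My first reduction would be to single-point support: since $\CHilb^k(X)=\bigcup_{p\in X}\CHilb^k_p(X)$ and each fibre consists of subschemes supported at $p$, every $\xi$ in the left-hand side is punctual, supported at some $p$, and lies in $\Hilb^{\l_1,\ldots,\l_s}_p(X)$. It therefore suffices to show that the curvilinear component $\Hilb^\l_p(X)$ is the \emph{unique} irreducible component of $\Hilb^{\l_1,\ldots,\l_s}_p(X)$ meeting the closure $\CHilb^k_p(X)=\overline{\Curv^k_p(X)}$; equivalently, that no non-curvilinear-sum component of the punctual geometric subset consists entirely of limits of honestly curvilinear length-$k$ schemes.

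I would attack this through the characterisation of Lemma \ref{characterisation}, which says $\xi\in\calH^{\l}_p(X)$ if and only if $\mathfrak{m}_p^{l(\l_1)+\ldots+l(\l_s)}\not\subset I_\xi$, together with the algebraic (test-jet) model of \S\ref{subsec:testjetmodel} used to prove Corollary \ref{prop:components}. The aim is to show that any length-$k$ curvilinear scheme lying in $\Hilb^{\l_1,\ldots,\l_s}_p(X)$ degenerates inside the single $\GL(N)$-orbit whose closure is $\Hilb^\l_p(X)$; since the non-curvilinear locus is closed (the closedness argument of \S\ref{sec:curvilinear}), curvilinear subschemes of the geometric subset cannot leak into the other, non-smoothable or Iarrobino-type, components, so their closure is exactly $\Hilb^\l_p(X)$. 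A clean way to package this is a dimension count: $\Hilb^\l(X)$ is irreducible of codimension $s-1$ in $\Hilb^{\l_1,\ldots,\l_s}(X)$ by Corollary \ref{prop:components}(2), so it is enough to bound $\dim\big(\CHilb^k(X)\cap \Hilb^{\l_1,\ldots,\l_s}(X)\big)$ above by $\dim\Hilb^\l(X)$ and then invoke irreducibility of the latter to force equality of the two closed sets.

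The main obstacle is precisely the part of the Hilbert scheme the text repeatedly flags as terra incognita: the smoothable (and Iarrobino-type) components of $\Hilb^{\l_1,\ldots,\l_s}_p(\CC^n)$ admit no general classification, so ruling out that some such component is entirely curvilinear — equivalently, bounding the dimension of its intersection with $\CHilb^k_p(X)$ — cannot be done by a soft argument, and as noted in the text it is only achieved unconditionally when the types $Q_{\l_i}$ are zero-dimensional. In the general monomial case the argument seems to genuinely require Conjecture \ref{curvfixedpoint} (all torus fixed points lie in the curvilinear component), via its torus-equivariant consequence that the support map $\pi_{[s]}$ carries fixed points into $\CHilb_0^{[s]}(\CC^n)$; this dependence is why the statement is recorded as a conjecture rather than a theorem.
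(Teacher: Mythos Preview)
The paper does not prove this statement: it is labeled \emph{Conjecture}, introduced by the sentence ``We conjecture that being regular is not restrictive.'' There is no proof in the paper to compare your proposal against.

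Your proposal is not a proof either, and to your credit you essentially acknowledge this in the final paragraph. But the earlier paragraphs are written as though a proof is underway, so let me flag where the argument actually breaks down rather than merely becoming hard.

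For the ``easy'' inclusion $\Hilb^{\l}(X)\subseteq \CHilb^k(X)$ you invoke Definition~\ref{def:curvilinear} together with Conjecture~\ref{curvfixedpoint}. That conjecture is open in the paper, so already this direction is not established; the sentence ``all monomial algebras are curvilinear'' in the text is explicitly predicated on Conjecture~\ref{curvfixedpoint}. So neither inclusion is unconditional.

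For the reverse inclusion your proposed dimension count does not close the gap even formally. Knowing that $\Hilb^{\l}(X)$ is irreducible of a certain dimension and that $\CHilb^k(X)\cap \Hilb^{\l_1,\ldots,\l_s}(X)$ has at most that dimension would not force the two sets to coincide unless you already knew one is contained in the other \emph{and} the larger one is irreducible. You have neither: the intersection on the left need not be irreducible, and you have not shown it is contained in $\Hilb^{\l}(X)$ (that is exactly what is to be proved). The sentence about curvilinear schemes ``not leaking'' into Iarrobino-type components conflates two different closures: the closedness argument in \S\ref{sec:curvilinear} shows non-curvilinear schemes form a closed set inside the \emph{punctual} Hilbert scheme, but here you need control over limits inside $\CHilb^k_p(X)$, which includes boundary points that are not in the open curvilinear locus $\Curv^k_p(X)$.

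In short: the paper offers no proof, and your sketch correctly isolates the obstruction (unknown structure of smoothable components, dependence on Conjecture~\ref{curvfixedpoint}) but does not overcome it. The honest summary is your last sentence; the preceding paragraphs should be read as heuristics rather than steps of an argument.
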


\begin{remark}
An other natural question about the geometric subset $\Hilb^{A_1,\ldots, A_s}(X)$ is how deformations of the support determine the limit points. Namely, is it true that 
\[\Hilb^{\lambda_1+ \ldots + \lambda_s}(X)=\pi_{[s]}^{-1}(\CHilb^s(X))\]
holds for regular geometric subsets?
\end{remark}

\section{Two models for punctual geometric subsets}
Let $I \subset \mathfrak{m} \subset \CC[x_1,\ldots, x_n]$ be an ideal which defines the nilpotent complex algebra $A=\CC[x_1,\ldots, x_n]/I$ of dimension $k$. 
In this section we develop a model for the punctual geometric locus (we called this a type in the introduction)
\[\calH^{A}(\CC^n)=\{\xi \in \Hilb^k(\CC^n): \calo_\xi \simeq A\}\]
and the correponding punctual geometric subset $\Hilb^A(\CC^n)=\overline{\calH^{A}(\CC^n)}$. For historical accuracy we start in \S \ref{subsec:testjetmodel} with a model developed in \cite{bsz} for monomial geometric subsets based on ideas of Damon \cite{damon} and then we describe the work of Kazarian \cite{kazarian} on the refinement of this model for any nilpotent algebra $A$. For the sake of the vanishing theorem described in the strategy Kazarian's model turns out to be more efficient and we use this model in the rest of the paper. 

In the first approach the main geometric idea is the description of $Q(A)$ as certain moduli of map germs from $\CC^d$ to $\CC^n$ where $d$ is the minimal number of generators is a presentations of $A$. Its closure, $P(A)$ is a projective completion of a quotient by a non-reductive reparametrisation group. This way we construct a parametrisation of the embedding $P(A) \hookrightarrow \grass_k(\mathfrak{m}/\mathfrak{m}^{K+1})$ into the traditional embedding space for the Hilbert scheme, which is the Grassmannian (or flag) variety of subspaces of fixed dimension in the vector space $\mathfrak{m}/\mathfrak{m}^{K+1}$ of jets of functions on $\CC^n$. Equivariant localisation on $P(A)$ can be shifted to the smooth ambient space using equivariant duals and this way we can transform the Atiyah-Bott localisation into a residue and prove the vanishing property directly.

In Kazarian's approach $\grass_k(\mathfrak{m}/\mathfrak{m}^{K+1})$ is replaced by a new embedding space which he calls the nonassociative Hilbert scheme. 

\subsection{The test jet model for $\Hilb^{\lambda}(\CC^n)$}\label{subsec:testjetmodel}

\begin{definition}[$\lambda$-jets]
Let $\lambda$ be a $d$-dimensional partition whose boxes are numbered with $d$-tuples $(a_1,\ldots, a_d)\in \ZZ_{\ge 0}^d$ indicating the integer coordinates of the corresponding box. E.g for $\lambda=(\delta,2\delta)$ we have
\[\ytableausetup{centertableaux,boxsize=normal}
\mathcal{B}=\begin{ytableau}
 {\scriptscriptstyle 0,1} & {\scriptscriptstyle 1,1}  &  & {\scriptscriptstyle \delta-1,1} \\
 {\scriptscriptstyle 0,0}        & {\scriptscriptstyle 1,0} &  & {\scriptscriptstyle \delta,0} & & & & {\scriptscriptstyle 2\delta-1,0} 
\end{ytableau}\]
If $u,v$ are positive integers let $J(u,v)$ denote the (infinite dimensional ) vector space of holomorphic map germs $(\CC^u,0)\to (\CC^v,0)$ at the origin. Let $J_\lambda(u,v)$ denote the vector space of $\lambda$-jets of holomorphic maps $(\CC^u,0)\to (\CC^v, 0)$ at the origin, that is, the set of equivalence classes of maps $f:(\CC^u,0)\to (\CC^v,0)$ with nonzero first partial derivatives, where $f\sim g$ if and only if $f^{(\bj)}(0)=g^{(\bj)}(0)$ for all $\bj\in \lambda$.
\end{definition}

One can compose map-jets via substitution:
\[J(u,v) \times J(v,w) \to J(u,w).\]
However, the elimination of terms of degree outside $\lambda$ does not commute with this map, that is, the diagram 
\[\xymatrix{J(u,v) \times J(v,w) \ar[r] \ar[d]^{(f,g)\mapsto (f_\l,g_\l)} & J(u,w) \ar[d]^{f \mapsto f_\l} \\
J_\l(u,v) \times J_\l(v,w) \ar[r] & J_\l(u,w)
}\]
does not commute in general: $(f \circ g)_\l \neq (f_\l \circ g_\l)_\l$. 

Let $\tilde{J}_(u,v) \subset J(u,v)$ denote the subset whose elements result a commutative diagram
\[\xymatrix{\tilde{J}(u,v) \times J(v,w) \ar[r] \ar[d]^{(f,g)\mapsto (f_\l,g_\l)} & J(u,w) \ar[d]^{f \mapsto f_\l} \\
\tilde{J}_\l(u,v) \times J_\l(v,w) \ar[r] & J_\l(u,w)
}\]
that is, 
\[(f \circ g)_\l = (f_\l \circ g_\l)_\l \text{ for all } f\in \tilde{J}_\l(u,v), g\in J_\l(v,w)\]
This leads to the composition map
\[\tilde{J}_\lambda(u,v) \times J_\lambda(v,w) \to J_\lambda(u,w).\]

Let $\xi \in \Xi(\lambda)$ be a subscheme. Then by definition $\calo_\xi \simeq \CC[x_1,\ldots, x_n]/I_\lambda$ where $I_\lambda$ is the monomial ideal corresponding to $\lambda$ and $\xi$ is contained in the $\lambda$-jet of a nonsingular $\CC^d$-germ at the origin in $\CC^n$: 
\[\xi \subset \mathcal{S}^\lambda_0 \subset X.\]
Let $f:(\CC^d,0)\to (\CC^n,0)$ be a holomorphic germ whose $\lambda$-jet $f_\l$ parametrises $\mathcal{S}_0$ in some local coordinates on $\CC^n$ at $\{0\}$.  
Then $f_{\l}\in J_\lambda(d,n)$ is determined only up to polynomial reparametrisation germs $\phi: (\CC^d,0)\to (\CC^d,0)$. More precisely, let $\tilde{J}_\l(d,d) \subset J_\l(d,d)$ denote the subset as above such that the diagram 
\[\xymatrix{\tilde{J}(d,d) \times J(d,n) \ar[r] \ar[d]^{(f,g)\mapsto (f_\l,g_\l)} & J(d,n) \ar[d]^{f \mapsto f_\l} \\
\tilde{J}_\l(d,d) \times J_\l(d,n) \ar[r] & J_\l(d,n)
}\] 
The subset $\tilde{J}^\reg_\l(d,d) \subset \tilde{J}_\l(d,d)$ consisting of jets with regular linear part form a subgroup acting on $J_\l(d,n)$.
\begin{lemma}\label{plquotient} $\calH^{\l}(\CC^n)$ is equal to the set of regular $\lambda$-jets of germs modulo reparametrisation: 
\[\calH^{\lambda}(\CC^n)=J^\reg_\l(d,n)/\tilde{J}^\reg_\l(d,d)\]

\end{lemma}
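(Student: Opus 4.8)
The plan is to establish the claimed identification as a set-theoretic equality by producing inverse correspondences between $\calH^\lambda(\CC^n)$ and the quotient $J^\reg_\l(d,n)/\tilde{J}^\reg_\l(d,d)$, using the geometric picture already set up in Remark~\ref{remark:embed} and the definition of $\Xi(\lambda)$. First I would unpack both sides. On the left, a point $\xi\in\calH^\lambda(\CC^n)$ is a subscheme with $\calo_\xi\simeq\CC[x_1,\ldots,x_n]/I_\lambda$ contained in the $\lambda$-jet $\mathcal{S}_0^\lambda$ of a nonsingular $\CC^d$-germ. By the remark preceding the statement, choosing a holomorphic germ $f\colon(\CC^d,0)\to(\CC^n,0)$ with regular linear part and parametrising $\mathcal{S}_0$ yields a $\lambda$-jet $f_\lambda\in J_\lambda(d,n)$, so there is a well-defined assignment $\xi\mapsto f_\lambda$ once we fix such an $f$; the content is that this assignment descends to the quotient and is a bijection.

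The key steps, in order, are the following. \emph{Well-definedness on the target.} I would first verify that every regular jet $f_\lambda\in J_\lambda^\reg(d,n)$ actually produces a genuine point of $\calH^\lambda(\CC^n)$: the image $\xi=f(\Xi(\lambda))$ carries the algebra structure $\calo_\xi\simeq\CC[x_1,\ldots,x_n]/I_\lambda$ precisely because $f$ has regular linear part, so $f^*$ identifies the coordinate ring of $\xi$ with the quotient by the monomial ideal $I_\lambda$. \emph{Reparametrisation-invariance.} Two germs $f,g$ parametrise the same subscheme $\xi$ if and only if they differ by a reparametrisation $\phi\colon(\CC^d,0)\to(\CC^d,0)$, i.e. $g=f\circ\phi$. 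The crucial point is that on $\lambda$-jets this amounts to $g_\lambda=(f_\lambda\circ\phi_\lambda)_\lambda$, and for this composition formula to hold one must be inside the subset $\tilde J_\lambda^\reg(d,d)$ where the diagram commutes; this is exactly why that subset, rather than all of $J_\lambda^\reg(d,d)$, appears in the quotient. So the fibres of $\xi\mapsto f_\lambda$ are precisely the $\tilde J_\lambda^\reg(d,d)$-orbits. \emph{Group structure and action.} I would then cite that $\tilde J_\lambda^\reg(d,d)$ is a group under the $\lambda$-truncated composition (established just above the statement) acting on $J_\lambda^\reg(d,n)$, so the orbit space is well-defined, and assemble injectivity and surjectivity of the induced map into the asserted bijection.

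The main obstacle will be the reparametrisation-invariance step, specifically controlling the failure of truncation to commute with composition: in general $(f\circ g)_\lambda\neq(f_\lambda\circ g_\lambda)_\lambda$, so the naive claim ``same subscheme $\Leftrightarrow$ same jet up to reparametrisation'' is false unless one restricts to $\tilde J_\lambda^\reg(d,d)$. I expect the delicate part is showing that this restricted subset is large enough to capture \emph{all} reparametrisations relating two germs with the same $\lambda$-jet image — i.e. that whenever $f_\lambda$ and $g_\lambda$ define the same $\xi$, the connecting reparametrisation can be taken (on jets) to lie in $\tilde J_\lambda^\reg(d,d)$. This hinges on the combinatorics of which monomials lie in $\lambda$ versus $I_\lambda$, and is where the well-oriented/partition structure of $\lambda$ enters; the argument should reduce to checking that the terms by which composition and truncation disagree all land in the ideal $I_\lambda$ and hence do not affect $\xi$.
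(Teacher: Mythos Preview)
Your proposal is correct and follows essentially the same approach as the paper. In fact the paper does not give a separate proof of this lemma: the paragraph immediately preceding the statement (explaining that a point of $\calH^\lambda(\CC^n)$ is contained in the $\lambda$-jet of a nonsingular $\CC^d$-germ, that such a germ is parametrised by some $f_\lambda\in J_\lambda^\reg(d,n)$, and that $f_\lambda$ is determined only up to reparametrisations in $\tilde J_\lambda^\reg(d,d)$) \emph{is} the argument, and your write-up simply expands those steps. The ``main obstacle'' you flag---that the reparametrisation relating two jets with the same image can always be taken in $\tilde J_\lambda^\reg(d,d)$---is exactly the injectivity content; the paper defers the closely related statement to the proof of Theorem~\ref{embedgrass}(1).
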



Let $l=\omega(\lambda)$ denote the width of $\l \subset \mathbb{Z}^d$, that is, the maximal integer coordinate appearing in any of the boxes belonging to $\l$. Fix an integer $N\ge 1$ and define
\[\Theta_\lambda=\left\{\Psi\in J_\omega(n,N):\exists \g \in J_\lambda^\reg(d,n) \text{ such that }  (\Psi \circ \g)_\lambda=0
\right\},\]
that is, $\Theta_\lambda$ is the set of those $\lambda$-jets of germs on $\CC^n$ at the origin which vanish on some regular map. By definition, $\Theta_\lambda$ is the image
of the closed subvariety of $J_\omega(n,N) \times J_\lambda^\reg(d,n)$ defined by
the algebraic equations $(\Psi \circ \g)_\lambda=0$, under the projection to
the first factor. If $\Psi \circ \gamma=0$, we call $\g$ a {\em test
map} of $\Theta$. 

Test maps of germs are generally not unique. A basic but crucial observation is the following. If $\g$ is a test
map of $\Psi \in \Theta_\lambda$, and $\vp \in \tilde{J}_\lambda^\reg(d,d)$ is a 
holomorphic reparametrisation of $\CC^d$, then $\g \circ \vp$ is,
again, a test map of $\Psi$:
\begin{displaymath}
\label{basicidea}
\xymatrix{
  \CC \ar[r]^\vp & \CC \ar[r]^\g & \CC^n \ar[r]^{\Psi} & \CC^N}
\end{displaymath}
\[\Psi \circ \g=0\ \ \Rightarrow \ \ \ \Psi \circ (\g \circ \vp)=0\]

In fact, we get all test maps of $\Psi$ in this way if the
following property (open and dense in $\theta_\lambda$) holds: the linear part of $\Psi$ has
$d$-dimensional kernel. Before stating this in Theorem 
\ref{embedgrass} below, let us write down the equation $\Psi \circ
\g=0$ in coordinates in an illustrative example. 

\begin{example} Let $\lambda=(\delta,2\delta)$ which is a $d=2$-dimensional partition of width $\omega=2\delta$. Let $\xi \in \calH^{(\delta,2\delta)}(\CC^n)=\calH(\underbrace{\ydiagram{3, 6}}_{2\delta})$ be a generic subscheme supported at $p\in X$. Then by definition $\calo_\xi\simeq \CC[u,v]/(u^{2\delta},u^{\delta}v,v^2)$.
 Let 
\[\g=\{\frac{\partial \g}{\partial^i u\partial^j v}  : (i,j)\in (\delta,2\delta)\} \in J_{(\delta,\delta)}^\reg(d,n)\]
be the $\lambda$-jet of the test map $\g$ and 
\[\Psi=(\Psi',\Psi'',\ldots ,\Psi^{(2\delta)})\]
the $2\delta$-jet of $\Psi$. Using the chain rule and the shorthand notation $v_{ij}=\frac{\partial \g}{\partial^i u\partial^j v}$ for $(i,j)\in (\delta,2\delta)$ 
the equation $(\Psi \circ \g)_{(\delta,2\delta)}=0$ reads as follows for $\delta=2$:
\begin{eqnarray}\label{nodalequations} \nonumber
& \Psi'(v_{10})=0    \\ \nonumber
& \Psi'(v_{20})+\Psi''(v_{10},v_{10})=0 \\ \nonumber
& \Psi'(v_{30})+2\Psi''(v_{10},v_{20})+\Psi'''(v_{10},v_{10},v_{10})=0 \\ \nonumber
& \Psi'(v_{01})=0 \\ \nonumber
& \Psi'(v_{11})+\Psi''(v_{10},v_{01})=0 \\ \nonumber
\end{eqnarray}

\end{example}

\begin{lemma}\label{lambdaequations}  Let 
\[\g=\{\frac{\partial \g}{\partial^i u\partial^j v}  : (i,j)\in \lambda\} \in J_\lambda^\reg(d,n)\]
be the $\lambda$-jet of a test map $\g$ and 
\[\Psi=(\Psi',\Psi'',\ldots ,\Psi^{\omega(\lambda)})\]
the $\omega(\lambda)$-jet of $\Psi$.
Then substituting $v_{ij}=\frac{\partial \g}{\partial^i u\partial^j v}$, the equation $(\Psi\circ \g)_{\lambda}=0$ is equivalent to the following system of $|\lambda|-1$ linear equations with values in
  $\CC^N$:
\begin{equation}
  \label{modeleqlambda}
\sum_{\tau \in \mathcal{P}(\mu)} \Psi(\bv_\tau)=0,\quad \mu \in \lambda, \mu \neq (0,0)
\end{equation}
Here $\mathcal{P}(\mu)$ denotes the set of partitions $\tau=\tau_1+\ldots +\tau_s$ of $\mu$ into elements in $\lambda$ and $\bv_\tau=v_{\tau_1}\cdots v_{\tau_s}$. 
\end{lemma}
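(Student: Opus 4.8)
The plan is to recognise the displayed system as the multivariate higher-order chain rule (the Fa\`a di Bruno formula) for the composite germ $\Psi\circ\g$, read off box by box. First I would unwind what $(\Psi\circ\g)_\lambda=0$ means: the $\lambda$-jet of a germ $(\CC^d,0)\to(\CC^N,0)$ is exactly the collection of its Taylor coefficients, equivalently of its partial derivatives $\partial^\mu$ at $0$, indexed by the boxes $\mu\in\lambda$. Hence $(\Psi\circ\g)_\lambda=0$ is equivalent to the vanishing of $\partial^\mu(\Psi\circ\g)(0)$ for every $\mu\in\lambda$. The box $\mu=(0,\ldots,0)$ contributes $\Psi(\g(0))=\Psi(0)=0$ automatically (both germs fix the origin), so exactly the $|\lambda|-1$ nonzero boxes survive, which already matches the claimed count.

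The core computation is to expand each of these derivatives. Writing $\g(u)=\sum_{\bj}\tfrac{1}{\bj!}v_\bj u^\bj$ with $v_\bj=\partial^\bj\g(0)$ (and $v_{(0,\ldots,0)}=0$ since $\g(0)=0$), and $\Psi(x)=\sum_{m\ge1}\tfrac{1}{m!}\Psi^{(m)}(x,\ldots,x)$ with $\Psi^{(m)}$ the symmetric $m$-linear derivative, I would substitute the first series into the second and collect the coefficient of $u^\mu$. Each monomial $u^\mu$ is produced by choosing an unordered decomposition $\mu=\tau_1+\ldots+\tau_s$ into nonzero multi-indices and feeding $v_{\tau_1},\ldots,v_{\tau_s}$ into $\Psi^{(s)}$; collecting the ordered choices inducing a given decomposition $\tau$ produces a combinatorial multiplicity $w(\tau)$ equal to the number of set partitions of the multiset underlying $\mu$ whose blocks have profile $\tau$. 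Thus $\partial^\mu(\Psi\circ\g)(0)$ (equal to the coefficient of $u^\mu$ up to the global scalar $\mu!$) equals $\sum_\tau w(\tau)\,\Psi^{(|\tau|)}(v_{\tau_1},\ldots,v_{\tau_{|\tau|}})$. Reading $\mathcal{P}(\mu)$ as this weighted set of decompositions (equivalently, summing over set partitions) turns this into the asserted identity $\sum_{\tau\in\mathcal{P}(\mu)}\Psi(\bv_\tau)=0$; I would check $\mu=(2,0),(3,0),(1,1)$ against the worked example to pin down conventions.

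Two structural points then finish the argument. First, the restriction to parts $\tau_i\in\lambda$ is automatic rather than an extra hypothesis: if $\mu\in\lambda$ and $\tau_1+\ldots+\tau_s=\mu$ with $\tau_i\ge 0$, then each $\tau_i\le\mu$ componentwise, and since $\lambda$ is a Young diagram, i.e.\ an order ideal in $\ZZ_{\ge0}^d$ closed downward, we get $\tau_i\in\lambda$. Hence every $v_{\tau_i}$ that occurs is part of the recorded $\lambda$-jet of $\g$, and the system uses no information beyond $\g\in J_\lambda^\reg(d,n)$. Second, the orders of $\Psi$ that appear are $|\tau|\le|\mu|$, bounded by $\omega(\lambda)$ over all boxes $\mu\in\lambda$ in well-oriented position, so only the retained components $\Psi',\ldots,\Psi^{\omega(\lambda)}$ enter and truncating $\Psi$ at order $\omega(\lambda)$ loses nothing. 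Finally, for $\g$ (hence all $v_\bj$) fixed, each equation is manifestly linear in the components $\Psi^{(1)},\ldots,\Psi^{(\omega(\lambda))}$ and takes values in $\CC^N$, which yields the remaining assertions.

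I expect the main obstacle to be the bookkeeping of the Fa\`a di Bruno multiplicities: making precise in what sense $\mathcal{P}(\mu)$ is counted so that the coefficient-free notation $\sum_{\tau}\Psi(\bv_\tau)$ reproduces the genuine set-partition weights $w(\tau)$ (and reconciling this with the coefficients in the worked example), together with the verification that the truncation order $\omega(\lambda)$ really dominates $|\mu|$ for every box of $\lambda$, so that no higher derivative of $\Psi$ is silently discarded.
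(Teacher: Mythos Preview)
Your approach is correct and matches the paper's: the lemma is stated without proof immediately after a worked example, and the intended argument is precisely the multivariate Fa\`a di Bruno expansion you outline, together with the observation that $\lambda$ is an order ideal so all parts $\tau_i$ of $\mu\in\lambda$ automatically lie in $\lambda$. The obstacle you flag concerning whether $\omega(\lambda)$ really bounds the maximal number of parts $|\mu|$ is a genuine imprecision in the paper's conventions (it fails e.g.\ for a square Young diagram) rather than a gap in your reasoning; for the shapes actually used in the paper the bound holds.
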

\begin{remark}

\end{remark}
For a given $\g \in \jetregl dn$ let $\mathcal{S}^{N}_{\g}$ denote the set of 
solutions of the equations in \eqref{modeleqlambda}, that is,
\begin{equation}\label{solutionspacelambda}
\mathcal{S}^{N}_\g=\left\{\Psi \in J_{\omega(\lambda)}(n,N):\ (\Psi \circ \g)_\lambda=0\right\}
\end{equation}
The equations \eqref{modeleqlambda} are linear in $\Psi$, hence
\[\mathcal{S}^{N}_\g \subset J_{\omega(\l)}(n,N)\]
is a linear subspace of codimension $(|\l|-1)N$, i.e a point of $\grass_{\mathrm{codim}=(|\l|-1)N}(J_{\omega(\l)}(n,N))$, whose orthogonal, $(\mathcal{S}^{N}_{\g})^\perp$, is an $(|\l|-1)N$-dimensional subspace of $J_{\omega(\l)}(n,N)^*$. These subspaces are invariant under the reparametrization of $\g$ and again 
\[(\mathcal{S}^{N}_{\g})^\perp=(\mathcal{S}^{1}_{\g})^\perp \otimes \CC^N\]
holds. 

For $\Psi \in J_{\omega(\l)}(n,N)$ let $\Psi^1 \in \Hom(\CC^n,\CC^N)$ denote the linear part. When $N\ge n$ then the subset 
\[\tilde{\mathcal{S}}^{N}_{\g}=\{\Psi \in \mathcal{S}^{N}_{\g}: \dim \ker \Psi^1=d\}\]
is an open dense subset of the subspace $\mathcal{S}^{N}_{\g}$. In fact it is not hard to see that the complement $\tilde{\mathcal{S}}^{N}_{\g}\setminus \mathcal{S}^{N}_{\g}$ where the kernel of $\Psi^1$ has dimension at least two is a closed subvariety of codimension $>1$.


\begin{theorem}\label{embedgrass} \begin{enumerate}
\item The map
\[\phi: \jetregl dn \rightarrow \grass_{(|\l|-1)}(J_{\omega(\l)}(n,1)^*)\]
defined as  $\gamma  \mapsto (\mathcal{S}^{1}_\g)^\perp$
is $\diff_{\l}=\tilde{J}^\reg_\l(d,d)$-invariant and induces an injective map on the $\diff_{\l}$-orbits into the Grassmannian 
\[\phi^\grass: \jetregl dn /\diff_{\l} \hookrightarrow \grass_{(|\l|-1)}(J_{\omega(\l)}(n,1)^*).\]
Moreover, $\phi$ and $\phi^\grass$ are $\GL(n)$-equivariant with respect to the standard action of $\GL(n)$ on $\jetregl dn \subset \Hom(\CC^{|\l|-1},\CC^n)$ and the induced action on $\grass_{(|\l|-1)}(J_{\omega(\l)}(n,1)^*)$.
\item Recall that $J_{\omega(\l)}(n,1)^*=\symdotl$. The image of $\phi$ and the image of $\varphi$ defined in Remark \ref{remark:embed} coincide in $\grass_{(|\l|-1)}(\symdotl)$.
\[\mathrm{im}(\phi)=\mathrm{im}(\varphi)\subset \grass_{(|\l|-1)}(\symdotl).\]
\item The closure of the image is the punctual geometric subset $\Hilb_0^{\lambda}(\CC^n)$:
\[\Hilb_0^{\lambda}(\CC^n)=\overline{\phi^\grass(\jetregl dn)}\]

\end{enumerate}
\end{theorem}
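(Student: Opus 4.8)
The plan is to reduce all three assertions to a single identification: for every $\gamma \in \jetregl dn$ the orthogonal complement $(\mathcal{S}^1_\gamma)^\perp$ is exactly the $(|\l|-1)$-dimensional dual subspace $S_{\xi_\gamma} \subset \symdotl$ that $\varphi$ attaches to the subscheme $\xi_\gamma \in \calH^\l(\CC^n)$ parametrised by $\gamma$. Once this is in hand, part (2) is immediate, the injectivity half of part (1) follows by combining it with the parametrisation Lemma \ref{plquotient}, and part (3) follows by taking closures and invoking properness of the Hilbert scheme embedding. I would therefore organise the argument around first establishing this identification and then harvesting the three statements, treating the invariance and equivariance in (1) as an independent, purely formal chain-rule check.

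For the $\diff_\l$-invariance in (1) I would argue directly from the defining equations. The space $\mathcal{S}^1_\gamma$ consists of those $\Psi$ admitting $\gamma$ as a test map, i.e. $(\Psi \circ \gamma)_\l = 0$. The basic observation preceding Theorem \ref{embedgrass} shows that $\gamma \circ \vp$ is then again a test map of $\Psi$ for every $\vp \in \diff_\l = \tilde J^{\reg}_\l(d,d)$, so $\mathcal{S}^1_\gamma \subseteq \mathcal{S}^1_{\gamma\circ\vp}$; applying the same to the inverse reparametrisation $\vp^{-1} \in \diff_\l$ gives the reverse inclusion, whence $\mathcal{S}^1_{\gamma\circ\vp} = \mathcal{S}^1_\gamma$ and $\phi(\gamma\circ\vp) = \phi(\gamma)$. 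The $\GL(n)$-equivariance is the analogous, simpler computation: postcomposing $\gamma$ with $g \in \GL(n)$ replaces the pullback $\Psi \circ \gamma$ by $(\Psi \circ g) \circ \gamma$, so $\mathcal{S}^1_{g\gamma} = g\cdot \mathcal{S}^1_\gamma$ for the standard $\GL(n)$-action on functions, and dualising yields $\phi(g\gamma) = g\cdot\phi(\gamma)$, which manifestly descends to $\phi^\grass$.

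The technical core is the identification underlying (2). Expanding $(\Psi \circ \gamma)_\l = 0$ by the chain rule is precisely the content of Lemma \ref{lambdaequations}: it rewrites the vanishing condition as the linear system \eqref{modeleqlambda}, and $\Psi$ solves this system if and only if the truncated pullback of $\Psi$ vanishes on the $\l$-jet scheme cut out by $\gamma$, that is, if and only if $\Psi$ lies in the ideal $I_{\xi_\gamma}$ read inside $\mathfrak{m}/\mathfrak{m}^{\omega(\l)+1}$. Thus $\mathcal{S}^1_\gamma$ is the image of $I_{\xi_\gamma}\cap\mathfrak{m}$, of the codimension $|\l|-1$ already recorded, and dualising gives $(\mathcal{S}^1_\gamma)^\perp = (\mathfrak{m}/I_{\xi_\gamma})^* = S_{\xi_\gamma} = \varphi(\xi_\gamma)$. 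Letting $\gamma$ range over $\jetregl dn$, and using that every subscheme of $\calH^\l(\CC^n)$ is parametrised by some regular test map (Lemma \ref{plquotient}), yields $\im(\phi) = \varphi(\calH^\l(\CC^n))$, which is (2). Injectivity of $\phi^\grass$ is then formal: if $\phi(\gamma) = \phi(\gamma')$ then $\varphi(\xi_\gamma) = \varphi(\xi_{\gamma'})$, hence $\xi_\gamma = \xi_{\gamma'}$ since $\varphi$ is injective, and therefore $\gamma,\gamma'$ lie in a single $\diff_\l$-orbit by Lemma \ref{plquotient}.

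Finally, (3) follows by taking Zariski closures in the Grassmannian. Part (2) gives $\phi^\grass(\jetregl dn) = \varphi(\calH^\l(\CC^n))$, and since the punctual Hilbert scheme is projective the embedding $\varphi$ is a closed immersion, so closure commutes with $\varphi$ and $\overline{\phi^\grass(\jetregl dn)} = \varphi(\overline{\calH^\l(\CC^n)}) = \varphi(\Hilb_0^\l(\CC^n))$, which is $\Hilb_0^\l(\CC^n)$ under the identification by $\varphi$. The main obstacle I anticipate is the identification in (2): one must verify carefully that the chain-rule equations \eqref{modeleqlambda} cut out exactly the vanishing ideal and nothing more, and in particular that the open-dense regularity hypothesis $\dim\ker\Psi^1 = d$ noted after \eqref{solutionspacelambda} is what forces the codimension to be exactly $|\l|-1$ and makes the linear span of the equations independent of the representative $\gamma$ in its orbit. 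Everything else is bookkeeping with the chain rule together with the formal closure and properness arguments.
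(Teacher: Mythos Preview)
Your argument is correct and complete, but it is organised differently from the paper's own proof. You deduce everything from the single identification $(\mathcal{S}^1_\gamma)^\perp = S_{\xi_\gamma}$, and in particular you obtain the injectivity of $\phi^\grass$ as a formal consequence of part~(2) together with Lemma~\ref{plquotient} and the injectivity of $\varphi$. The paper instead proves injectivity \emph{directly}, without passing through (2): it shows by induction on the jet order that if $\gamma$ and $\delta$ are both test maps for a $\Psi$ with $\dim\ker\Psi^1 = d$, then $\gamma = \delta\circ\Delta$ for some $\Delta\in\diff_\l$, essentially reproving the hard direction of Lemma~\ref{plquotient} in situ. Your route is cleaner and more modular, since it isolates the geometric content in the identification and then harvests the rest; the paper's route is more self-contained, since it does not defer to Lemma~\ref{plquotient} (which is stated there without proof), but as actually written the inductive step is phrased for the $d=1$ curvilinear case and would need adaptation for general $\l$. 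For parts~(2) and~(3) the paper simply asserts that they follow from the definitions, so your explicit unpacking via the chain rule and the closed-immersion argument is a genuine improvement in detail.
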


\begin{proof}
For the first part it is enough to prove that for $\Psi \in \Theta_\l$ with $\dim \ker \Psi^1=d$ and $\gamma,\delta \in J_\l^\reg(d,n)$
\[(\Psi \circ \gamma)_\l=(\Psi \circ \delta)_\l=0 \Leftrightarrow \exists
\Delta \in \diff_{\l}\text{ such that } \gamma_\l=\delta_\l
\circ \Delta.\]
We prove this statement by induction on the length of $\l$. Let $\gamma=X_1t+\dots +t^k$ and
$\delta=w_1t+\dots+ w_kt^k$. Since $\dim \ker \Psi^1=1$, $v_1=\lambda w_1$, for some
$\lambda\neq0$. This proves the $k=1$ case. 

Suppose the statement is true for $k-1$. Then, using the appropriate
order-($k-1$) diffeomorphism, we can assume that $v_m=w_m$, $m=1\ldots
k-1.$ Then from the equation
$\Psi\circ\gamma=0$ we get that  $\Psi^1(v_k)=\Psi^1(w_k)$, and hence
$w_k=v_k-\lambda v_1$ for some $\lambda\in\CC$. Then
$\gamma=\Delta\circ\delta$ for $\Delta=t+\lambda t^k$, and the proof
is complete.

The second and third part immediately follow from the definition of $\varphi$ and $\phi$.
\end{proof}

\begin{remark}\label{remark:orbit} \begin{enumerate}
\item From Lemma \ref{lambdaequations} immediately follows that 
\begin{equation}\label{sgammal}
(\mathcal{S}^{1}_\g)^\perp=\Span_\CC \left(\sum_{\tau \in \mathcal{P}(\mu)} \bv_\tau:\  \mu \in \lambda \setminus \{(0,0)\}\right) \subset \symdotl.
\end{equation} 
\item Let $\{e_1,\ldots, e_n\}$ be a basis of $\CC^n$. Pick any $|\l|-1$ elements of this basis and index them with the boxes of $\l$. Since $\phi$ is $\GL(n)$-equivariant, for $|\l|-1 \le n$ the $\GL(n)$-orbit of 
\[p_{\l,n}=\phi(e_{\mu}: \mu \in \lambda)=\mathrm{Span}_\CC \left(\sum_{\tau \in \mathcal{P}(\mu)} e_\tau:\  \mu \in \lambda \setminus \{(0,0)\}\right),\] 
forms a dense subset of $\Hilb_0^{\lambda}(\CC^n)$ and therefore 
\[\overline{\phi(\jetregl dn)}=\overline{\mathrm{GL_n} \cdot p_{\l,n}}.\] 
\end{enumerate}
\end{remark}




\begin{example}\label{example:gottsche} For $\lambda=(\delta, 2\delta)$ the embedding $\phi^\grass$ has the following form: 
\[\phi^\grass: \Hilb^{(\delta,2\delta)}_0(\CC^n) \hookrightarrow \grass_{3\delta-1}(\symdotl)\]
\[(v_{i0},v_{j1}:1\le i \le 2\delta-1,0\le j \le \delta-1) \mapsto \mathrm{Span}\left(\begin{cases} v_{10} \\ v_{20}+v_{10}^2 \\ \ldots \\ v_{2\delta-1,0}+\ldots +v_{10}^{2\delta-1} \end{cases},\begin{cases} v_{01} \\ v_{11}+2v_{10}v_{01} \\\ldots \\ v_{\delta-1,1}+\ldots +v_{10}^{\delta-1}v_{01} \end{cases}\right)\]

In particular, for $\delta=2$ we get  $\lambda=(2,4)$ and the equations above provide this embedding immediately as 
\[\Sigma_{2}^1 \hookrightarrow \grass(5, \Sym)\]
\[(v_{10},v_{20},v_{30},v_{01},v_{11}) \mapsto \mathrm{Span}(v_{10},v_{20}+v_{10}^2,v_{30}+2v_{10}v_{20}+v_{10}^3,v_{01},v_{11}+2v_{10}v_{01})\]
Note that in this example the unipotent radical of the stabiliser $\Stab(p_{\l,n})=\diff_\l$ is generated by the actions 
\[v_{20}+tv_{10},\  v_{30}+tv_{10},\ v_{20}+tv_{01},\  v_{11}+tv_{01},\ v_{01}+tv_{10}\]
that is, 
\[\diff_\l=\]
Hence we actually have an embedding into a smaller Grassmannian: we can replace $\sym^{\le \omega(\l)}$ with 
\[\widetilde{\sym}=\{v{10}\wedge (v_{20}+v_{10}^2)\wedge (v_{30}+v_{10}v_{20}+v_{10}v_{01})\wedge v_{01} \wedge (v_{11}+v_{10}v_{01}+v_{10}^2): v_{ij} \in \CC^n\}\subset \symdotl\]
We will see in \S \ref{sec:severi} that this test curve model, and the localisation over that can give better formula for the integrals, because $\grass(\symdotl)$ is a smaller dimensional smooth ambient space of the geometric subset than the set of all filtered commutative algebra structures in the Kazarian model. 
\end{example}

\subsection{The Kazarian model of punctual geometric subsets}\label{subsec:kazarianmodel} This section is a short overview of the model introduced by Kazarian in \cite{kazarian3}. 
Let $V$ be an $n$-dimensional vector space, and let $\frakm$ denote the maximal ideal in the ring of function germs at the origin of $V$. 
Let $I \subset \frakm$ be an ideal of finite codimension $k$, and let $N=\frakm/I$ be the nilpotent quotient algebra. Then we have two natural linear maps:
\[\psi_1: V^* \to N\] 
is the restriction of the projection $\frakm \to \frakm/I=N$ to the dual $V^*$ consisting of linear functions. 
The second map is 
\[\psi_2: \Sym^2 N \to N.\]
given by the multiplication law in the algebra $N$. 
\begin{lemma} The induced linear map $\psi_1 \oplus \psi_2 : V^* \oplus \Sym^2 N \to N$ is surjective.
\end{lemma}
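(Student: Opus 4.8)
The plan is to compute the images of the two maps separately and then show their sum is all of $N$. First I would identify the image of $\psi_2$. Since multiplication in a commutative algebra factors through the symmetric square, $\im \psi_2$ is spanned by all products $\bar g \cdot \bar h$ with $g,h \in \frakm$; and because the quotient map $\frakm \to N = \frakm/I$ is an algebra homomorphism, each such product is the class of the element $gh \in \frakm^2$. As every element of $\frakm^2$ is a finite sum of such products, this gives $\im \psi_2 = (\frakm^2 + I)/I$, i.e. exactly the image $\overline{\frakm^2}$ of the square of the maximal ideal. Next I would note that $\im \psi_1$ is by definition the subspace of $N$ spanned by the classes of linear functions, namely the image of the composite $V^* \hookrightarrow \frakm \to N$.

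The key input is then the elementary decomposition $\frakm = V^* + \frakm^2$, which is nothing but the surjectivity of $V^* \to \frakm/\frakm^2$: after fixing linear coordinates, every germ $f$ vanishing at the origin splits as $f = \ell_f + r_f$, with $\ell_f \in V^*$ its linear part and $r_f \in \frakm^2$. Applying the quotient map $\frakm \to N$, which is surjective since $N = \frakm/I$, yields $N = \overline{V^*} + \overline{\frakm^2} = \im \psi_1 + \im \psi_2$, which is precisely the surjectivity of $\psi_1 \oplus \psi_2$. In other words, the statement is a repackaging of the fact that the local algebra is generated by $\frakm/\frakm^2$.

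I do not expect a genuine obstacle here, as the argument is a direct unwinding of definitions; the only step that warrants care is the first one, namely checking that the image of the multiplication map $\psi_2$ is the \emph{entire} space $\overline{\frakm^2}$ rather than some smaller subspace. This is where one must use that $\frakm^2$ is spanned as a vector space by products of two elements of $\frakm$, together with the multiplicativity of the projection $\frakm \to N$, so that the product structure of $N$ faithfully realizes the span of these products.
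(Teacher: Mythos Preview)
Your argument is correct. The paper does not actually supply a proof of this lemma: it states the result and immediately passes to Proposition~\ref{idealcharacterisation}, treating the surjectivity as evident. Your proposal therefore fills in exactly the routine verification the paper omits, and the steps you give --- identifying $\im\psi_2$ with $\overline{\frakm^2}=N^2$, identifying $\im\psi_1$ with the image of the linear forms, and using the decomposition $\frakm = V^* + \frakm^2$ --- constitute the standard and expected argument.
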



This lemma implies the following.

\begin{proposition}\label{idealcharacterisation}
Let $V$ be an $n$-dimensional vector space, $\frakm \subset \calo_{V,0}$ the maximal ideal at the origin, and let $N$ be a vector space of dimension $k$. There is a one-to-one correspondence 
 \begin{equation}\label{1-1}
 \left\{I \subset \frakm: \dim(\frakm/I)=k\right\} \longleftrightarrow \left\{\substack{(\psi_1,\psi_2)\ |\ \  \psi_1: V^* \to N \text{ linear }\\ \psi_2 : \Sym^2 N \to N \text{ is an associative commutative nilpotent algebra structure on $N$} \\
 \psi_1 \oplus \psi_2 \text{ is surjective}}\right\}
  \end{equation} 
 Let $Y_r \subset \Hom(V^* \oplus \Sym^2 N,N)$ denote the Zariski open set on the right hand side of \eqref{1-1}.
\end{proposition}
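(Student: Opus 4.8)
The plan is to exhibit two mutually inverse maps between the two sets in \eqref{1-1}. The forward map $\Theta$, sending an ideal $I$ to a pair $(\psi_1,\psi_2)$, is already described in the statement: $N=\frakm/I$ is a finite-dimensional nilpotent commutative associative algebra, $\psi_1$ is the restriction to $V^*$ of the projection $\frakm\to N$, $\psi_2$ is its multiplication, and $\psi_1\oplus\psi_2$ is surjective by the preceding lemma, so $\Theta(I)$ indeed lands in $Y_r$. To build the inverse $\Xi$, given $(\psi_1,\psi_2)\in Y_r$ I would first adjoin a unit, forming $\tilde N=\CC\oplus N$ with multiplication extending $\psi_2$ and $1$ as unit. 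Associativity and commutativity of $\psi_2$ (commutativity being automatic from the $\Sym^2$ source) make $\tilde N$ a unital commutative associative algebra in which $N$ is the maximal ideal. Since $\Sym(V^*)=\CC[x_1,\dots,x_n]$ is the free commutative associative algebra on $V^*$, the linear map $\psi_1\colon V^*\to N\subset\tilde N$ extends to a unique unital algebra homomorphism $\CC[x_1,\dots,x_n]\to\tilde N$; as $N$ is nilpotent this factors through a finite jet quotient and hence through $\calo_{V,0}$, yielding $\Phi\colon\calo_{V,0}\to\tilde N$, and I set $I=\Ker\Phi$.

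Next I would check that $I$ lies in the correct set. Because $N$ is nilpotent, say $N^{K+1}=0$, one has $\Phi(\frakm^{j})\subseteq N^{j}$, so $\frakm^{K+1}\subseteq I\subseteq\frakm$; in particular $I$ is an ideal of finite codimension and everything may be computed in the finite jet algebra $\calo_{V,0}/\frakm^{K+1}$. The crucial point is that $\Phi|_{\frakm}\colon\frakm\to N$ is surjective. Its image is the subalgebra of $N$ generated by $\psi_1(V^*)$ under $\psi_2$; surjectivity of $\psi_1\oplus\psi_2$ gives $N=\psi_1(V^*)+N^2$, and a Nakayama argument for the nilpotent algebra $N$ then forces this subalgebra to be all of $N$. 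Consequently $\Phi$ induces an isomorphism $\calo_{V,0}/I\xrightarrow{\ \sim\ }\tilde N$ restricting to $\frakm/I\xrightarrow{\ \sim\ }N$, so $\dim_\CC(\frakm/I)=k$ and $\Xi(\psi_1,\psi_2)=I$ is a legitimate element of the left-hand set.

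Finally I would verify that $\Theta$ and $\Xi$ are mutually inverse. For $\Xi\circ\Theta=\mathrm{id}$, starting from $I$ the algebra $\tilde N=\CC\oplus(\frakm/I)$ is canonically $\calo_{V,0}/I$, and the homomorphism $\Phi$ extending $\psi_1$ agrees with the quotient map $\calo_{V,0}\to\calo_{V,0}/I$ on constants and on the generators $V^*$; since both are unital algebra homomorphisms and $\calo_{V,0}/I$ is generated by the image of $V^*$, they coincide, whence $\Ker\Phi=I$. For $\Theta\circ\Xi=\mathrm{id}$, the isomorphism $\calo_{V,0}/I\cong\tilde N$ constructed above carries the projection of $V^*$ to $\psi_1$ and the induced multiplication on $\frakm/I$ to $\psi_2$, recovering the original pair. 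The step I expect to be the main obstacle is the surjectivity of $\Phi|_{\frakm}$: one must argue carefully that the relation $N=\psi_1(V^*)+N^2$ propagates through all powers of the maximal ideal to generate $N$, and that the universal property of $\Sym(V^*)$ together with nilpotency lets one pass freely between $\calo_{V,0}$, its completion, and finite jets without affecting the kernel.
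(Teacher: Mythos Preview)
Your argument is correct and in fact supplies considerably more detail than the paper does: the paper offers no proof at all for this proposition beyond the phrase ``This lemma implies the following,'' deferring the construction to Kazarian's original article. Your forward and backward maps, the use of the universal property of $\Sym(V^*)$ together with nilpotency to factor through $\calo_{V,0}/\frakm^{K+1}$, and the Nakayama-type inductive argument (from $N=\psi_1(V^*)+N^2$ one gets $N\subseteq S+N^j$ for all $j$, hence $N=S$ once $N^{K+1}=0$) are all sound and standard.

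One point worth flagging, which affects both your write-up and the paper's statement: as literally phrased the correspondence cannot be a bijection of sets. With $N$ a \emph{fixed} $k$-dimensional vector space, the forward map $\Theta$ requires a choice of linear isomorphism $\frakm/I\cong N$, and two such choices differ by an element of $\GL(N)$; conversely, pairs $(\psi_1,\psi_2)$ in the same $\GL(N)$-orbit yield the same ideal $I$ under your map $\Xi$. So the honest statement is that ideals of codimension $k$ are in bijection with $\GL(N)$-orbits in $Y_r$. The paper tacitly acknowledges this in the very next proposition (Proposition~\ref{freeaction}), where the Hilbert scheme is recovered as $Y_r/\!\!/\GL_d$, and your verification that $\Theta\circ\Xi=\mathrm{id}$ implicitly uses the specific identification $\frakm/I\cong N$ induced by $\Phi$. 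It would strengthen your write-up to make this $\GL(N)$-equivariance explicit.
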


Next we describe a filtered version of Proposition \ref{idealcharacterisation}. Assume that $N$ is endowed with the filtration
\[N=N_1 \supset N_2 \supset \ldots \supset N_{r+1}=0,\ \  \dim N_k/N_{k+1}=d_k.\]
We call a linear map $\psi: \Sym^2 N \to N$ satisfying $\psi(N_k \otimes N_m) \subset N_{k+m}$ a \textit{filtered commutative algebra structure} on $N$. We denote by $\mathrm{Alg}(d,N)=\mathrm{Alg}(d_1, \ldots , d_r)$ the vector space of filtered commutative algebra structures on $N$. Then $\dim N=k=\sum_{i=1}^r d_i$ and we call $d=(d_1,\ldots, d_r)$ the dimension vector of the filtration. Let $e_1,\ldots ,e_k$ be a basis of $N$ such that the vectors $e_{d_1+\ldots +d_{k-1}+1},\ldots ,e_{d_1+\ldots +d_{k-1}+d_k}$ span $N_k/N_{k+1}$. Then the multiplication is given by the structure equations  
\[e_i \cdot e_j=\psi(e_i,e_j)=\sum_{w(i)+w(j) \le w(\ell)} q_{i,j}^\ell e_\ell\]
where we define the weight $w(i)$ for an integer $i$ satisfying 
\[d_1+ \ldots +d_{s-1} < i \le  d_1+\ldots+d_s\]
to be equal to $w(i)=k$. The structure coefficients
\[q^k_{ij}=q^k_{ji} ,\ \  w(i)+w(j) \le w(k)\]
form coordinates in the vector space $\mathrm{Alg}(d)$.

Filtered commutative algebras are not necessarily associative: we denote by $\mathrm{Assoc}(d,N) \subset \mathrm{Alg}(d,N)$ the subvariety of \textit{filtered associative algebras} on the filtration $N_\bullet$. The ideal of $\mathrm{Assoc}(d,N)$ in $\mathrm{Alg}(d,N)$ is generated by the quadratic associativity equations: the identity which expresses the equality of the degree-$t$ components of the products $(e_i \cdot e_j) \cdot e_k$ and $e_i \cdot (e_j \cdot e_k)$ for a quadruple $(i,j,s,t)$ with $w(i)+w(j)+w(s) \le w(t)$ is 
\begin{equation}\label{assoceq}
\sum_m q_{i,j}^mq_{m,s}^t-\sum_mq_{i,m}^tq_{j,s}^m=0.
\end{equation}
 
Let $\GL(d)\subset \GL(N)$ denote the linear
transformations of $N$ preserving the flag $N_\bullet$. This subgroup is determined by the dimension vector $d=(d_1,\ldots, d_{r})$ and $\mathrm{Assoc}(d,N)$ is invariant under $\GL(d)$. For a fixed nilpotent associative algebra $A \in \mathrm{Assoc}(d,N)$ of dimension $k$ we let 
\[Q(A)=\mathrm{Assoc}_A(d,N)=\overline{\{\xi \in \mathrm{Assoc}(d,N): A_\xi \simeq A\}}\]
denote the closure of the subset formed by the algebras isomorphic to $A$. 

Now we discuss the connection of $\mathrm{Assoc}(d,N)$ and $\mathrm{Assoc}_A(d,N)$ with Hilbert schemes.  Let $V=\CC^n$ and denote by $\Hilb_d(V)$ the nested Hilbert scheme parametrising flags of ideals 
\begin{equation}\label{idealflag}
\mathfrak{m}=I_1 \supset I_2 \supset \ldots \supset I_{r+1}=I
\end{equation}
such that $\dim I_s/I_{s+1}=d_s$ for $s=1,\ldots ,r$ and $I_i \cdot I_j \subset I_{i+j}$ for all $i$ and $j$ with $i+j \le r+1$. Since $\frakm^{r+1} \subset I_{r+1}$, the variety $\Hilb_d(V)$ is equipped with the bundles 
\[0 \to I_{r+1}/\frakm^{K+1} \to (\frakm/\frakm^{K+1}) \times \Hilb_d(\CC^n) \to \frakm/I_{r+1} \to 0 \]
where $I_{r+1}/\frakm^{K+1}$ is the canonical subbundle of the trivial bundle $(\frakm/\frakm^{K+1}) \times \Hilb_d(V)$ and quotient bundle $N=\frakm/I_{r+1}$ of rank $\mu$. Let $D=N^*$ denote its dual. 

The algebra $A_i=\CC[x_1,\ldots, x_n]/I_{I}$ has dimension $1+d_1+\ldots +d_i$, and we denote by 
\[\Hilb^{A_\bullet}(\CC^n)=\{\mathfrak{m}=I_1 \supset I_2 \supset \ldots \supset I_{r+1}: \mathfrak{m}/I_i \simeq A_i\} \subset \Hilb_d(\CC^n)\] 
the closure of the subvariety formed by flags of ideals with prescribed isomorphism types of the quotient filtered algebras. $\Hilb^A(V)$ and $\Hilb_d(V)$ are not necessarily smooth spaces. However, using Proposition \ref{idealcharacterisation} we now construct an embedding of $\Hilb_d(V)$ to some compact nonsingular variety $X$ such that 1) the action of $\GL(n)$ and the bundle $D$ extend to $X$ and 2) Equivariant localisation on $X$ can be transformed into iterated residue and the vanishing theorem holds. 

The ideal flag \eqref{idealflag} defines a filtration
\[N=\frakm/I_{r+1}=N_1 \supset N_2 \supset \ldots \supset N_{r+1}=0, \ \ \dim N_k/N_{k+1}\]
which is compatible with the nilpotent algebra structure on $N=\frakm/I_{r+1}$.

\begin{definition}\label{def:nonasshilbert} Let $X_r \subset \Hom(V^* \oplus \Sym^2 N,N)$ the subset on the right hand side of \eqref{1-1} without the associativity condition: 
\[X_r=\left\{\substack{(\psi_1,\psi_2)\ |\ \  \psi_1: V^* \to N \text{ linear }\\ \psi_2 : \Sym^2 N \to N \text{ is commutative nilpotent algebra structure on $N$} \\
 \psi_1 \oplus \psi_2 \text{ is surjective}}\right\}\]
The surjectivity determines a Zariski open subset in the variety in $\Hom(V^* \oplus \Sym^2 N,N)$ satisfying the first two conditions. The group $\GL(d)\subset \GL(N_\bullet)$ acts naturally on $\tilde{X}_r$. 
\end{definition}

\begin{proposition}\label{freeaction} \begin{enumerate}
\item The action of $\GL(d)$ on $X_r$ is free, hence the orbit space 
\[\NAHilb^d(V)=X_r/\!/ \GL(d)\]
is smooth and compact. We call this the nonassociative Hilbert scheme, and $q:X_r \to \NAHilb^d(V)$ the quotient map. 
\item The Hilbert scheme $\Hilb_d(V)$ is isomorphic to the locus in $\NAHilb^d(V)$ corresponding to associative algebras:
\[\Hilb_d(V)=q(Y_r)=Y_r/\!/\GL_d \subset \NAHilb^d(V).\] 
\item  The geometric subset $\Hilb^A_0(V) \subset \Hilb_d(V)$ consists of the points of $\NAHilb^d(V)$ for which the canonical algebra on the corresponding fiber of $N_r$ is isomorphic to $A$:
\[\Hilb^{A_\bullet}_0(\CC^n)=\{\psi \in Y_r : \frakm/I_s \simeq A_s\}//\GL(d) \subset \Hilb_d(V) \subset \NAHilb^d(V).\]  
\end{enumerate}
\end{proposition}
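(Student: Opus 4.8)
The plan is to handle the three parts in order, with the freeness of the $\GL(d)$-action carrying part (1) and the correspondence of Proposition \ref{idealcharacterisation} driving parts (2) and (3). For part (1) I would first prove freeness. The group acts on a pair $(\psi_1,\psi_2)\in X_r$ by $g\cdot(\psi_1,\psi_2)=(g\circ\psi_1,\,g\circ\psi_2\circ\Sym^2(g^{-1}))$. Suppose $g$ fixes $(\psi_1,\psi_2)$. From $g\circ\psi_1=\psi_1$ we get that $g$ restricts to the identity on $\im(\psi_1)$, and from $g\circ\psi_2=\psi_2\circ\Sym^2(g)$ we get that $g$ is an automorphism of the commutative nilpotent algebra $(N,\psi_2)$. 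The decisive point is that $\im(\psi_1)$ generates $N$ as an algebra: since $\psi_2$ strictly raises filtration degree we have $\psi_2(\Sym^2 N)\subset N_2$, so the surjectivity $\im(\psi_1)+\psi_2(\Sym^2 N)=N$ forces $\psi_1$ to surject onto $N_1/N_2$; substituting the identity $N=\im(\psi_1)+\psi_2(N\otimes N)$ into itself and discarding the terms that climb the filtration (which terminates at $N_{r+1}=0$) expresses every element of $N$ as a polynomial in elements of $\im(\psi_1)$. An algebra automorphism fixing a generating set pointwise is the identity, so $g=\mathrm{id}$.

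Given freeness, smoothness of $\NAHilb^d(V)=X_r/\!/\GL(d)$ follows because $X_r$ is an open subset of the vector space of filtered pairs and a geometric quotient of a smooth variety by a free action of a smooth algebraic group is smooth, the orbits all having dimension $\dim\GL(d)$. For compactness I would realise $\NAHilb^d(V)$ as an iterated bundle built along the filtration $N_\bullet$: reconstructing $\psi_1$ and the structure constants $q_{ij}^\ell$ one graded level at a time, modulo the corresponding block of $\GL(d)$, exhibits each stage as a Grassmannian- or flag-type bundle with projective fibre over the previous stage, and a finite tower of projective bundles is compact. \textbf{I expect this compactness step to be the main obstacle.} Because $\GL(d)$ is a parabolic, hence non-reductive, subgroup of $\GL(N)$, compactness (and even the existence of the quotient as a separated scheme) cannot be read off from reductive GIT; one must verify that the surjectivity condition is precisely what makes the quotient map proper and separated, for instance by running the valuative criterion on a one-parameter family of surjective pairs equipped with a compatible $\GL(d)$-frame.

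For part (2), once $N$ is fixed as a $k$-dimensional model space, Proposition \ref{idealcharacterisation} gives a bijection between ideals $I\subset\frakm$ of colength $k$ and associative pairs, i.e. points of $Y_r$. Two pairs determine the same ideal exactly when they differ by a filtration-preserving change of basis of $N$, which is an element of $\GL(d)$, and the correspondence \eqref{1-1} is a morphism in both directions and $\GL(d)$-equivariant; hence $q$ restricts to a bijection $Y_r/\!/\GL(d)\to\Hilb_d(V)$ that upgrades to an isomorphism of varieties, identifying $\Hilb_d(V)$ with the associative locus $q(Y_r)\subset\NAHilb^d(V)$. For part (3) I would then cut out the isomorphism-type condition inside this identification. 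The locus $\{\psi\in Y_r:\frakm/I_s\simeq A_s\}$ is $\GL(d)$-invariant, since acting by $g\in\GL(d)$ only changes the chosen basis of $N$ and therefore preserves the isomorphism classes of the quotient algebras; under the isomorphism of (2) it corresponds to the flags of ideals with $\frakm/I_s\simeq A_s$, whose closure is by definition $\Hilb^{A_\bullet}_0(\CC^n)$, and since (2) is an isomorphism of varieties, taking closures on both sides yields the stated description.
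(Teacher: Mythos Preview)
Your approach is correct and essentially matches the paper's, which attributes the result to Kazarian \cite{kazarian3} and then presents the tower-of-Grassmannians construction as the argument. Your freeness proof is in fact more detailed than anything the paper spells out: the key observation that $\im(\psi_1)$ generates $N$ as a $\psi_2$-algebra (via the filtration-raising property of $\psi_2$ and the surjectivity of $\psi_1\oplus\psi_2$) is exactly right, and the paper takes this for granted.

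The one place where the paper is sharper than your sketch is precisely the step you flagged as the main obstacle. You worry that, because $\GL(d)$ is only parabolic, the existence, separatedness, and compactness of the quotient might require a delicate non-reductive GIT argument or a valuative-criterion check. The paper sidesteps this entirely by passing to the dual picture: set $D=N^*$ with the dual filtration $0=D_0\subset D_1\subset\cdots\subset D_r=D$, and dualise $(\psi_1,\psi_2)$ to an injection $D\hookrightarrow V\oplus\Sym^2 D$ that sends $D_m$ into $V\oplus S_m$ where $S_m$ depends only on $D_1,\ldots,D_{m-1}$. Then choosing $D_m$ given $D_1,\ldots,D_{m-1}$ is literally choosing a $d_m$-dimensional subspace of $(V\oplus S_m)/D_{m-1}$, so $\NAHilb^{d_1,\ldots,d_r}(V)\to\NAHilb^{d_1,\ldots,d_{r-1}}(V)$ is the Grassmannian bundle $\grass_{d_r}((V\oplus S_r)/D_{r-1})$. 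This \emph{defines} the quotient as an iterated Grassmannian bundle; smoothness and compactness are then immediate, and no GIT is invoked at all. Your inductive sketch ``reconstructing $\psi_1$ and the structure constants one graded level at a time'' is pointing at exactly this, but the dual coalgebra formulation makes the fibre at each stage a genuine Grassmannian rather than something that merely looks flag-like, and that is what dissolves your stated obstacle.

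Parts (2) and (3) are handled in the paper exactly as you propose, by reading them off Proposition~\ref{idealcharacterisation}.
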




We finish this overview of \cite{kazarian3} by presenting a construction of the non-associative Hilbert scheme as a tower of Grassmannian bundles. We use the dual commutative coalgebra structure on $D=N^*$. For a filtration
\[0=D_0 \subset D_1 \subset \ldots \subset D_r=D\]
with $\dim(D_m/D_{m-1})=d_s$ and $d_1+\ldots +d_r=k$ a filtered commutative coalgebra structure is a linear map $\psi: D \to \sym^2 D$ mapping $D_m$ to the subspace 
\[S_m \subset D \otimes D \text{ generated by } D_i \otimes D_j \text{ with } i + j \le m.\]
Then naturally sits in $S_m \subset \sym^2 D_{m-1}$. We define 
\[X^*_r=\left\{\substack{(\psi_1,\psi_2)\ |\ \  \psi_1: D \to V \text{ linear }\\ \psi_2 : D \to \Sym^2 D \text{ is commutative nilpotent algebra structure on $D$} \\
 \psi_1 \oplus \psi_2 \text{ is injective}}\right\}\]
and the non-associative Hilbert scheme is again the orbit space
\[\NAHilb^d(V)=X_r/\!/ \GL(d)=X^*_r/\!/ \GL(d)\]
for the $\GL(d)$ action. 

Forming the quotient can be done iteratively, by induction on $r$. Let $\psi=\psi_1\oplus \psi_2: D \hookrightarrow V \oplus \sym^2 D$ denote the embedding. For $r=1$ we have $S_1 = 0$, therefore $\psi(D_1) \subset V$ and $\NAHilb^{d_1}(V)=\grass_{d_1}(V)$ is the Grassmannian manifold with the tautological rank $d_1$ bundle $D_1$ over it.

Assume that we already contructed the tower $\NAHilb^{d_1,\ldots, d_{r-1}}(V)$ with the corresponding tautological flag of bundles $D_1 \subset  \ldots  \subset D_{r-1}$ over it and with the embedding of subbundles $\psi(D_{r-1}) \subset V \oplus S_{r-1}$. Since $S_r$ is determined by $D_1, \ldots , D_{r-1}$ the bundle $S_r$ sits over $X_{r-1}$. The manifold $X^*_r$ parametrise subspaces $D_r$ in $V \oplus S_r$ containing the subspaces $D_{r-1}$ constructed on the previous step. Hence 
\[X^*_r=\grass_{d_r}((V \oplus S_r)/D_{r-1})\]
is the Grassmannian bundle over $X^*_{r-1}$.
This way we constructed $\NAHilb^d(V)$ as the total space of a tower of Grassmannian fibrations with smooth compact fibers:
\begin{equation}\label{mrtower}
\xymatrixcolsep{5pc} \xymatrix{\NAHilb^{d_1,\ldots ,d_r}(V) \ar[r]^{\grass_{d_r}(V_r/D_{r-1})} & \NAHilb^{d_1,\ldots ,d_{r-1}}(V) \ar[r]^-{\grass_{d_{r-1}}(V_{r-1})/D_{r-2})} \ar[r] & \ldots  \ar[r]^{\grass_{d_1}(V)} & pt  
}
\end{equation}
where $V_m=V\oplus S_m$. The non-associative Hilbert scheme is endowed with a diagram of bundles and subbundles
\begin{equation}\label{mrbundles} 
\xymatrix{D_1 \ar@{^{(}->}[r] \ar@{^{(}->}[d] & D_2 \ar@{^{(}->}[r] \ar@{^{(}->}[d]  & \ldots \ar@{^{(}->}[r] & D_r \ar@{^{(}->}[d] \\
V=V_1 \ar@{^{(}->}[r] & V_2 \ar@{^{(}->}[r] & \ldots \ar@{^{(}->}[r] & V_r 
}
\end{equation}

Proposition \ref{idealcharacterisation} can be reformulated as 
\begin{proposition} The Hilbert scheme $\Hilb_d(V)$ is isomorphic to the sublocus in $X_r$ consisting of points for which the canonical commutative filtered algebra on the corresponding fiber of $N_r$ is associative:
\[\Hilb_d(V)=\mathrm{Assoc}(d,N)/\!/\GL_d \subset X_r.\] 
 The geometric subset $\Hilb^A_0(\CC^n) \subset \Hilb_d(V)$ consists of the points of $X_r$ for which the canonical algebra on the corresponding fiber of $N_r$ is isomorphic to $A$:
\[\Hilb^A_0(\CC^n)=\mathrm{Assoc}_A(d,N)/\!/\GL_d=\{\xi \in \Hilb_d(V): A_\xi \simeq A\} \subset X_r.\] 
\end{proposition}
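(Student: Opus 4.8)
The plan is to derive the statement as a direct reformulation of Proposition \ref{idealcharacterisation} combined with the quotient description already established in Proposition \ref{freeaction}, the only genuine work being to reconcile the presence of the generator data $\psi_1$ in $X_r$ with the fact that $\mathrm{Assoc}(d,N)$ was defined using the multiplication $\psi_2$ alone. First I would recall from Proposition \ref{freeaction}(2) that the nested Hilbert scheme is already realised as the geometric quotient $\Hilb_d(V)=Y_r/\!/\GL(d)$ inside $\NAHilb^d(V)=X_r/\!/\GL(d)$, where by construction $Y_r\subset X_r$ is the locus of those pairs $(\psi_1,\psi_2)$ whose commutative multiplication $\psi_2$ is associative.

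The key point is that associativity is imposed on $\psi_2$ only: the quadratic relations \eqref{assoceq} involve the structure constants $q^\ell_{ij}$ and not the linear map $\psi_1$. Consequently $Y_r$ is exactly the preimage of $\mathrm{Assoc}(d,N)\subset\mathrm{Alg}(d,N)$ under the $\GL(d)$-equivariant forgetful projection $X_r\to\mathrm{Alg}(d,N)$, $(\psi_1,\psi_2)\mapsto\psi_2$, cut out inside the open surjectivity locus $X_r$. Writing $\mathrm{Assoc}(d,N)$ for this cut-out sublocus of $X_r$ (suppressing the $\psi_1$-factor from the notation, as in the statement), we obtain $\mathrm{Assoc}(d,N)/\!/\GL(d)=Y_r/\!/\GL(d)=\Hilb_d(V)$, which is the first assertion.

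For the second assertion I would restrict to the $\GL(d)$-invariant sublocus where the canonical fibre algebra is isomorphic to the fixed algebra $A$. By the one-to-one correspondence of Proposition \ref{idealcharacterisation}, a pair $(\psi_1,\psi_2)\in Y_r$ corresponds to an ideal flag with $\frakm/I_s\simeq A_s$ precisely when $\psi_2$ represents the algebra $A$, i.e. when $\psi_2\in Q(A)=\mathrm{Assoc}_A(d,N)$; hence the defining locus $\{\psi\in Y_r:\frakm/I_s\simeq A_s\}$ of Proposition \ref{freeaction}(3) is the preimage of $\mathrm{Assoc}_A(d,N)$. Because $\GL(d)$ acts freely (Proposition \ref{freeaction}(1)), the quotient map $q$ is a principal-bundle (hence geometric) quotient, so it is compatible with intersecting by invariant subvarieties and with taking closures, $q(\overline{W})=\overline{q(W)}$. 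Applying this to the invariant set $W=\{\,A_\xi\simeq A\,\}$ identifies $\mathrm{Assoc}_A(d,N)/\!/\GL(d)$ with $\{\xi\in\Hilb_d(V):A_\xi\simeq A\}$, the closure already being part of the definition of $\Hilb^A_0(\CC^n)$; this is exactly Proposition \ref{freeaction}(3) rewritten.

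The main obstacle --- and really the only substantive point --- is this bookkeeping caused by the asymmetry between $X_r$, which records both the generators $\psi_1$ and the multiplication $\psi_2$, and $\mathrm{Alg}(d,N)$, which records $\psi_2$ alone. One must verify that both the associativity condition and the $A$-isomorphism condition depend only on $\psi_2$, that the forgetful projection $X_r\to\mathrm{Alg}(d,N)$ is $\GL(d)$-equivariant, and that passing to the quotient commutes with the relevant preimages and closures. Once these are in place every remaining step is formal, so no further computation is needed.
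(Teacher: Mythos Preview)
Your proposal is correct and matches the paper's approach: the paper gives no proof at all, simply introducing the proposition with the sentence ``Proposition \ref{idealcharacterisation} can be reformulated as'' and stating it, so it treats the result as a direct restatement of Proposition \ref{idealcharacterisation} together with Proposition \ref{freeaction}. Your additional care about the bookkeeping between the pair $(\psi_1,\psi_2)$ and the $\psi_2$-only data in $\mathrm{Alg}(d,N)$ is more than the paper provides, but it is the right point to make given the paper's own notational looseness (it silently uses $X_r$ both for the open set in $\Hom(V^*\oplus\Sym^2N,N)$ and for its $\GL(d)$-quotient).
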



\subsection{The dimension of punctual geometric subsets}\label{subsec:dim} In this section we use the test-jet model of punctual monomial geometric subsets to describe a combinatorial formula for the dimension of $\Hilb^{\l}(\CC^n)\subset \Hilb^k(X)$ for any monomial ideal. Using this we prove Proposition \ref{prop:components} on curvilinear subsets. 

Recall from Theorem \ref{embedgrass} that $\Hilb^{\l}(\CC^n)$ can be identified with the closure of $\jetregl dn /\diff_{\l}$ in $\grass_{(|\l|-1)}(J_{\omega(\l)}(n,1)^*)$. The embedding is given as (see Remark \ref{remark:orbit}) 
\[\phi: \jetregl dn \to \mathrm{Hom}^{\reg}(\CC^{(|\l|-1)},\symdotl)\]
\[(\bv_\tau:\tau \in \l) \mapsto \left(\sum_{\tau \in \mathcal{P}(\mu)} \bv_\tau:\  \mu \in \lambda \setminus \{(0,0)\}\right)\]
where the right hand side is a vector of dimension $|\l|-1$ with entries in $\symdotl$ and $\phi$ induces an embedding
\[\jetregl dn/\diff_{\l} \hookrightarrow \grass_{(|\l|-1)}(\symdotl).\]
Therefore  
\[\dim(\Hilb^{\l}(\CC^n))=\dim(\jetregl dn)-\dim \mathrm{Stab}(\phi)\]
where $\mathrm{Stab}\phi \subset \GL((|\l|-1))$ 
is the stabiliser of the map $\phi$.

\subsection{Fibration over the flags in $TX$.}
Let $A=\CC[x_1,\ldots, x_n]/I$ be a local nilpotent algebra on $n$ generators and $\dim(A)=k$. In this section we define a partial resolution of $\Hilb^A(X)$ which fibers over the tangent flag manifold $\flag_{k-1}(TX)$. 
Let $k-1\le n$ and let $P_{n,k-1} \subset \GL_n$ denote the parabolic subgroup which preserves the flag 
\[\mathbf{f}=(\mathrm{Span}(e_1)   \subset \mathrm{Span}(e_1,e_2) \subset \ldots \subset \mathrm{Span}(e_1,\ldots, e_{k-1})=\CC_{[k-1]} \subset \CC^n).\] 
\begin{definition}\label{def:xktilde}
Define the partial desingularization 
\[\widehat{\Hilb}^{A}_0(\CC^n)=\GL_n \times_{P_{n,k-1}} \overline{P_{n,k-1} \cdot p_{n,k-1}}\]
with the resolution map $\rho: \widehat{\Hilb}^{A}_0(\CC^n) \to \Hilb^{A}_0(\CC^n)$ given by $\rho(g,x)=g\cdot x$. The test curve model defines the embedding
\[\xymatrix{\widehat{\Hilb}^{A}_0(\CC^n) \ar@{^{(}->}[r]^-\rho \ar[d]^\rho & \widehat{\grass}_{k-1}(\sym^{\le k-1} \CC^n) \ar[ld] \\ \flag_{k-1}(\CC^n) & }\]
where $\widehat{\grass}_k(\sym^{\le k} \CC^n)=\GL_n \times_{P_{n,k}} \grass_k(\sym^{\le k}\CC_{[k]})$ and $\flag_k(\CC^n)=\GL(n)/P_{n,k}$ is the flag variety.
\end{definition} 
\begin{remark} The fiber $\rho^{-1}(\bff)$ consists of limit subschemes $\lim_{l \to \infty} \xi^l$ where $\xi^l=\{p_1^l \sqcup \ldots \sqcup p_{k+1}^l\}$ 
is non-reduced such that $\lim_{l \to \infty} \Span(p_1,\ldots, p_i)=\Span(e_1,\ldots, e_{i-1})$ for $2\le i \le k+1$. 
\end{remark}

The fiberwise resolution results in a fibration over the flag tangent bundle:

\begin{definition}\label{def:widetilde} Define the partial desingularisation 
\[\widehat{\CHilb}^{k+1}(X)=\GL_X \times_{P_{n,k}}  \overline{P_{n,k} \cdot p_{k,m}}\]
and 
\[\widehat{\grass}_k(\sym^{\le k} TX)=\GL_X \times_{P_{n,k}} \grass_k(\sym^{\le k}\CC_{[k]})\]
with the partial resolution map 
\[\rho: \widehat{\CHilb}^{k+1}(X)=\GL_X \times_{\GL(n)} \left(\GL(n) \times_{P_{n,k}} \overline{P_{n,k}\cdot \mathfrak{p}_{m,k}}\right) \to \GL_X \times_{\GL(n)} \left(\overline{\GL(n)\cdot \mathfrak{p}_{m,k}}\right)=\CHilb^{k+1}(X)^{\GL}.\]
This results the diagram 
\[\xymatrix{\widehat{\CHilb}^{k+1}(X) \ar@{^{(}->}[r]^-\rho \ar[d]^\rho & \widehat{\grass}_k(\sym^{\le k} TX) \ar[ld] \\ \flag_k(TX) \ar[d]^\pi & \\ X & }\]
\end{definition}

\subsection{Neighborhood of $\Hilb^A(X)$ in $\GHilb^k(X)$}

Finally, the same argument works over a smooth manifold $X$: we let $\bU \subset TX$ be a small $\GL_X$-invariant tubular neighborhood of the zero section, with the exponential map $\exp: \bU \to X$, which identifies $U_x$ with $\exp(\bU_x) \subset X$. We may assume that $\bU=\GL_X \times_{\GL(n)} U$ for some $U \subset \CC^n$. We define 
\[\widehat{\BHilb}^{k+1}(\bU)=\GL_X \times_{P_{n,k}} \BHilb^{k+1}_\bff(U)\]
We get the diagram
\begin{equation}\label{diagramfour}
\xymatrix{
\widehat{\Hilb}^{A}(X) \ar@{^{(}->}[r] \ar[d]^\rho &  \widehat{\BHilb}^{A}(\bU) \ar[ld]^{\rho_{\BHilb}} \ar@{^{(}->}[r]^-{\iota} & \grass_{k+1}(S^\bullet TX) \ar[lld]^{\rho_\grass}\\
\flag_{k-1}(TX) \ar[d]^\mu & &\\
X &&} 
\end{equation}
with a birational morphism $\widehat{\BHilb}^{k+1}(\bU) \to \BHilb^{k+1}(\bU)=\cup_{p\in X} \BHilb^{k+1}(U_p)$. 
\begin{remark} The fiber $\rho^{-1}(f)$ over a flag 
\[f=(F_1 \subset F_2 \subset \ldots \subset F_k \subset T_pX)\]
sitting over the point $p\in X$ consists of limit subschemes $\lim_{l \to \infty} \xi^l$ where $\xi^l=\{p_1^l \sqcup \ldots \sqcup p_{k+1}^l\}$ is non-reduced such that $p_i \in U_p$ and $\lim_{l \to \infty} \Span(p_1,\ldots, p_i)=F_{i-1}$ for $2\le i \le k+1$. 
\end{remark}

\section{Approximating geometric subsets}

In this section we define approximating sets of geometric subsets following Rennemo \cite{rennemo} and Li \cite{junli}, and formulate a motivic localisation formula which presents integrals over geometric subsets as a certain sum of integrals over boundary components supported on diagonals via the Hilbert-Chow morphism. 

\subsection{Tautological bundles and integrals}\label{subsec:tautologicalbundles} Let $X$ be a smooth projective variety of dimension $n$ and let $F$ be a rank $r$ bundle (loc. free sheaf) on $X$.  
Let $\Hilb^k(X)$ denote the Hilbert scheme of $k$ points on $X$ parametrising length $k$ subschemes of $X$ and $F^{[k]}$ the corresponding rank $rk$ bundle on $\Hilb^k(X)$ whose fibre over $\xi \in \Hilb^k(X)$ is $F \otimes \calo_{\xi}=H^0(\xi,F|_\xi)$. 
  
Equivalently, $F^{[k]}=q_*p^*(F)$ where $p,q$ denote the projections from the universal family of subschemes $\mathcal{U}_k$ to $X$ and $\Hilb^k(X)$ respectively: 
 \[\Hilb^k(X)\times X \supset \xymatrix{\mathcal{U} \ar[r]^-q \ar[d]^-p & \Hilb^k(X) \\ X & }.\]
We call $F^{[k]}$ the tautological bundle corresponding to $F$. 

In this paper we work with singular homology and cohomology with rational coefficients. For a smooth manifold $X$ the degree of a class $\eta \in H_*(X)$ means its pushforward to $H_*(pt)=\QQ$. By choosing  $\alpha_\eta \in \Omega^{top}(X)$,  a closed compactly supported differential form representing the cohomology class $\eta$ this degree is equal to the integral  
\[\eta \cap [X]=\int_{X} \alpha_\eta.\]

Let $\Hilb^{A_1,\ldots, A_s}(X) \subset \Hilb^k(X)$ be a geometric subset and $\Phi(F^{[k]})$ be a monomial in the Chern classes $c_i=c_i(F^{[k]})$ of weighted degree equal to $\dim(\Hilb^{A_1,\ldots, A_s}(X))$. Then 
\begin{equation}\label{integral}
[\Hilb^{A_1,\ldots, A_s}(X)]\cap \Phi(F^{[k]})=\int_{\Hilb^{A_1,\ldots, A_s}(X)} \a_{\Phi}
\end{equation}
is called a tautological integral of $F^{[k]}$, where $\a_{\Phi}$ is a closed compactly supported differential form representing $\Phi$.
\begin{remark}\label{remark:pullback}
\begin{enumerate}
\item In \eqref{integral} the integral of $\alpha_X$ on the smooth part of $]\Hilb^{A_1,\ldots, A_s}(X)$ is absolutely convergent and by definition we denote this by $\int_{\Hilb^{A_1,\ldots, A_s}(X)} \alpha_X$.
\item Recall that if $f:X\to Y$ is a smooth proper
map between connected oriented manifolds such that $f$ restricted to
some open subset of $X$ is a diffeomorphism, then for a compactly
supported form $\mu$ on $Y$, we have $\int_X   f^*\mu= \int_Y\mu$.
The analogous statement for singular varieties is the following. Let $f:X\to N$ be a 
smooth proper map between smooth quasi-projective
varieties and assume that $X\subset X$ and $Y\subset N$ are possibly
singular closed subvarieties, such that $f$ restricted
to $X$ is a birational map from $X$ to $Y$. If $\mu$ is a closed differential form on $N$ then the integral of $\mu$ on the smooth part of
$Y$ is absolutely convergent; we denote this by $\int_Y\mu$. With this
convention we again have
$\int_X   f^*\mu= \int_Y\mu$.

In particular this means means that the integral $\int_Y \mu$ of the compactly supported form $\mu$ on $N$ is the same as the integral $\int_{\tilde{Y}}f^*\mu$ of the pull-back form $f^*\mu$ over any (partial) resolution $f:(\tilde{Y},\tilde{X}) \to (Y,X)$.
\end{enumerate}
\end{remark}


\subsection{Ordered Hilbert schemes and geometric subsets}

The Hilbert scheme of ordered points $\Hilb^{[k]}(X)$ is defined by the commutative diagram
\begin{equation*}
\xymatrix{\Hilb^{[k]}(X) \ar[r] \ar[d] &  \Hilb^k(X) \ar[d] \\
 X^k \ar[r]  & \Sym^k(X) } 
\end{equation*}
where the right hand arrow is the Hilbert-Chow morphism taking a subscheme $Z$ to its support cycle. We denote by $(Z,(x_1,\ldots, x_k))$ the point in $\Hilb^{[k]}(X)$ mapping to $Z\in \Hilb^k(X)$ and $(x_i) \subset X^k$. Define the bundle $F^{[[k]]}$ on $\Hilb^{[k]}(X)$ as the pullback of $F^{[k]}$ along $\Hilb^{[k]}(X) \to \Hilb^k(X)$. 

\begin{definition}[Geometric subsets in ordered Hilbert schemes]
Let $\mathbf{Q}=(Q_1,\ldots, Q_s)$ be a set of types such that $Q_i \subset \Hilb^{k_i}_0(\CC^n)$ and $k=k_1+\ldots +k_s$. Let $\mu=(\mu_1,\ldots, \mu_s)$ be a partition of $\{1,\ldots, k\}$ such that $|\mu_i|=k_i$. We define 
\[\calH^{\mathbf{Q},\mu}(X)=\{(Z,(x_i)): Z=Z_1 \sqcup \ldots \sqcup Z_s \text{ where }  Z_j \in Q_j, x_i=\mathrm{supp}(Z_j) \text{ if } i\in \mu_j\}.\]
A subset of $\Hilb^{[k]}(X)$ is geometric if it can be expressed as finite union, intersection and comple-
ment of sets of the form $\calH^{\mathrm{Q},\mu}(X)$. The closure in $\Hilb^{[k]}(X)$ is
\[\Hilb^{\mathbf{Q},\mu}(X)=\overline{\calH^{\mathbf{Q},\mu}(X)}\] 
Note that we will often drop $\mu$ from the notation, and since we will always work with ordered Hilbert schemes, we hope this will not cause any confusion. 

\end{definition}

Note that $\Hilb^{[k]}(X) \to \Hilb^k(X)$ is a branched cover of degree $k!$, and it restricts to a branched cover $\Hilb^{\mathbf{Q},\mu}(X) \to \Hilb^{\mathbf{Q}}(X)$ of degree ${k \choose k_1 \ldots k_s}$. The projection formula then gives 
\[{k \choose k_1 \ldots k_s} \int_{\Hilb^{\mathbf{Q}}(X)} \Phi(F^{[k]})=\int_{\Hilb^{\mathbf{Q},\mu}(X)} \Phi(F^{[[k]]})\]
for any Chern polynomial $\Phi$.

\begin{definition}\label{def:curvordgeometricsubset}
Let $A_1,\ldots, A_s$ be nilpotent algebras with $n$ generators, and let $\Hilb^{A_1,\ldots, A_s ;\mu}(X) \subset \Hilb^{[k]}(X)$ denote the geometric subset of the ordered Hilbert scheme corresponding to a fixed partition $\mu=(\mu_1,\ldots, \mu_s)$ of the points $\{1,\ldots, k\}$ into groups of size $|\mu_i|=\dim(A_i)$:
\[\{1,\ldots, k\}=\sqcup_{i=1}^s \mu_i\]

We call the component $\Hilb^{\lambda_1+ \ldots + \lambda_s,\L}(X)=\Hilb^{\lambda_1+ \ldots + \lambda_s}(X)$ the curvilinear component of the geometric subset $\Hilb^{\l_1,\ldots, \l_s;\mu}(X)$. When we fix the support to be at $p\in X$ we get the punctual curvilinear component $\Hilb^{\lambda_1+ \ldots + \lambda_s,\L}_p(X)$.
\end{definition}

\begin{example}[The G\"ottsche geometric subset] The G\"ottsche geometric subset corresps to the ordered geometric subset $\Hilb^{\mathfrak{m}/\mathfrak{m}^2,\ldots , \mathfrak{m}/\mathfrak{m}^2; \mu}(X) \subset \Hilb^{[3\delta]}(X)$ with the partition $\mu=\{\mu_1,\ldots, \mu_\delta\}$ where $\mu_i=\{3i-2,3i-1,3i\}$. The curvilinear component of the G\"ottsche subset is $\Hilb^{\mathfrak{m}/\mathfrak{m}^2 + \ldots +\mathfrak{m}/\mathfrak{m}^2}(X)=\Hilb^{(2\s,\s)}(X)$.
\end{example}

\subsection{Approximating sets} 

In this section we slightly modify the construction of Li \cite{junli} and Rennemo \cite{rennemo}, and define approximating sets of the geometric subset $\Hilb^{A_1,\ldots, A_s}(X)$.We will use the rational support map $\supp: \Hilb^{A_1,\ldots, A_s}(X) \to \GHilb^{[s]}(X)$ to the ordered main (geometric) component, and define a universal blow-up of $\Hilb^{A_1,\ldots, A_s}(X)$ with a rational morphism to the fully nested Hilbert scheme $\N^s(X)$ defined in \cite{berczitau2}. 

Hence we start with recalling from \cite{berczitau2} the construction of the fully nested Hilbert scheme. There we defined for any partition $\{1,\ldots, s\}=\a_1 \sqcup \a_2  \sqcup \ldots  \sqcup \a_t$ the scheme $\Hilb^{[\a_1,\ldots, \a_t]}(X)$ which is a certain approximation of  $\Hilb^{[s]}(X)$. We fix some notation and conventions first. For simplicity, let $\RHilb^{[s]}(X) \subset \GHilb^{[s]}(X)$ denote the open set of reduced subschemes of the form $\xi=x_1 \sqcup \ldots \sqcup x_s$, formed by $s$ different points on $X$.

\begin{definition}[Approximating sets of $\GHilb^{[s]}(X)$]\label{def:approximatingsets} Let $\a=(\a_1,\ldots, \a_t) \in \Pi(s)$ be a partition of $\{1,\ldots, s\}$. 
\begin{enumerate}
\item We let $\sim_\alpha$ be the equivalence relation on $\{1,\ldots, s\}$ given by letting the
elements of $\alpha$ form the equivalence classes.
We introduce the partial order by setting $\alpha \le \beta$ if $\sim_\a$ is a refinement of $\sim_\beta$.
Let 
\[\Delta_\alpha=\{(x_1,\ldots, x_s) \in X^s | x_i=x_j \text{ if } i\sim_\a j\}.\]
denote the closed diagonal, then $\Delta_\b \subseteq \Delta_\a$ whenever $\a \le \b$.
\item Let 
\[\Hilb^{[\a]}(X)=\prod_{i=1}^s \Hilb^{[\a_i]}(X),\]
where for a subset $S \subset \{1,\ldots, s\}$, $\Hilb^{[S]}(X)$ denotes the ordered Hilbert scheme of $|S|$ points labeled by $S$. The punctual part sits over the corresponding diagonal:
\[\Hilb^{[\a]}_0(X)=\prod_{i=1}^s \Hilb^{[\a_i]}_0(X)=\HC^{-1}(\Delta_\a).\]
\item We define
\[\GHilb^{[\a]}(X) = \prod_{i=1}^s \GHilb^{[\a_i]}(X)\]
which is the the closure of $\RHilb^{[s]}(X)$ in $\Hilb^{[\a]}(X)$. Like above, the punctual part is $\GHilb^{[\a]}_0(X)=\HC^{-1}(\Delta_\a)$. We will often use the simpler notation $\GHilb^{s}(X)=\GHilb^{[\Lambda]}(X)$ where $\Lambda=\{1,\ldots, s\}$ is the trivial partition, that is, the main component of the ordered Hilbert scheme on $s$ points. 
\end{enumerate}
\end{definition}
The fully nested Hilbert scheme $\N^s(X)$ parametrises ordered collections of $s$ points in $X$, with the additional data that when $l$ points with labels in the same set in the partition $\a$ come together at $x$, one must specify a length $l$ subscheme supported at $x$. The master blow-up space encodes information on how different subsets of points collide. 

\begin{definition}[Fully nested Hilbert scheme of $\GHilb^{[s]}(X)$]  Let $X$ be a complex nonsingular manifold and $s\ge 1$. \begin{enumerate} 
\item The fully nested Hilbert scheme is $\N^s(X)=\overline{im(h)}$, the closure of the image of the natural map
\[h: \RHilb^{[s]}(X) \to \prod_{\a \in \Pi(s)} \GHilb^{[\a]}(X).\]
The Hilbert-Chow morphism extends to $\N^s(X)$ and gives a morphism $\HC: \N^s(X) \to X^s$, hence obtain the $\a$-punctual locus $\N^\a_0(X)=\HC^{-1}(\Delta_\a)$. The projection $\pi_\a:\N^s(X) \to \GHilb^{[\a]}(X)$ fits into the diagram
\begin{equation}\label{commdiagram}
\xymatrix{\N^s(X) \ar[r]^-{\pi_\a} \ar[d]^{\HC} & \GHilb^{[\a]}(X) \ar[dl]^{\HC_\a} \\
X^s 
}
\end{equation}
\item Let $\a=(\a_1,\ldots, \a_s) \in \Pi(s)$ be a partition. Pull-back along the projection map $\pi_\a: \N^s(X)  \to \GHilb^{[\a]}(X)$ defines an approximating bundle $\pi_\a^*(F^{[\a]})$ over $\N^s(X)$, which satisfies 
 \[\pi_\a^*F^{[\a]}=\pi_{\a_1}^*F^{[[\a_1]]} \oplus \ldots \oplus \pi_{\a_s}^*F^{[[\a_s]]}.\]
We will loosely use the shorthand notation $F^{[\a]}$ for the bundle $\pi_\a^*(F^{[\a]})$. 
\item The punctual part, which sits over the $\a$-diagonal, fits into the diagram 
\begin{equation}\label{commdiagram2}
\xymatrix{\N^\a_0(X) \ar[r]^{\pi_\a} \ar[d]^{\HC} & \GHilb^{[\a]}_0(X) \ar[dl]^{\HC_\a} \\
\Delta_\a 
}
\end{equation}
and  $F^{[\a]}_0=\pi_\a^*(F^{[\a]}|_{\GHilb^{[\a]}_0(X)})$ its restriction to the punctual part $\N^\a_0(X)$. 
\end{enumerate}
\end{definition}

The terminology is self-explanatory: $\N^s(X)$ can be considered as a nested Hilbert scheme, but nested with respect to the full partially ordered net of subsets of $\{1,\ldots, s\}$. Note that $\N^s(X)$ is irreducible, being the closure of the image under $h$ of an irreducible variety. It is a blow-up of $\GHilb^s(X)$ via the natural projection map 
\[\pi_\L: \N^s(X) \to \GHilb^s(X)\] 
where $\L=\{1,\ldots, s\}$ is the trivial partition. However, the geometry of the restriction $\pi_\L: \N^s_0(X) \to \GHilb^s_0(X)$
to the punctual part is more subtle. In particular, the preimage of the curvilinear component $\CHilb^s(X)$ is not necessarily irreducible which is a delicate part of our integration argument, addressed in the next section.  

\begin{definition}\label{def:curvgeometricsubsetQ} The curvilinear part of $\N^s(X)$ is $\CN^s(X)=\pi_\L^{-1}(\CHilb^{[s]}(X))$ where $\L=\{1,\ldots, s\}$ is the trivial partition. The punctual curvilinear component supported at $p\in X$ is 
\[\CN^s_p(X)=\pi_\L^{-1}(\CHilb^{[s]}_p(X))=\CN^s(X) \cap \HC^{-1}(\{p,\ldots, p\}).\]
\end{definition}

\begin{example} We have seen in \cite{berczitau2} that $CN^3_0(\CC^2)$ is not irreducible, and  it has $2$ components: the curvilinear component $\CN^3_{main}(\CC^2)$ of dimension $1$, and an other component $CN^3_{Por}=\PP^1 \times \PP^1$ sitting over the Porteous point $I=\mathfrak{m}^2 \in \CHilb^3_0(\CC^2)$. 
\end{example}

Despite these extra components, the projection $\pi^\L$ is isomorphism over the curvilinear locus in $\CHilb^{s}(X)$. Recall this is defined as
\[\mathrm{Curv}^s(X)=\{\xi \in \Hilb^s_0(X): \calo_\xi \simeq \CC[t]/t^s\},\]
and it is a dense open subset of $\CHilb^s(X)$. 

Next, we define approximating sets and fully nested Hilbert scheme for geometric subsets. These spaces will all admit a map to the corresponding approximating sets of $\Hilb^{[s]}(X)$.

\begin{definition} For a partition $\mu=(\mu_1,\ldots, \mu_s)$ of $\{1,\ldots, k\}$ and a partition $\a=(\a_1,\ldots, \a_t)$ of $\{1,\ldots, s\}$ let $\mu|\a=(\beta_1,\ldots, \beta_t)$ denote the partition of $\{1,\ldots, k\}$ which we get by merging elements of $\mu$ if they are in the same element of $\a$:
\[\beta_i=\cup_{j \in \a_i} \mu_j \text{ for } 1\le i \le t\]
\end{definition}

\begin{definition}[Approximating sets of geometric subsets] Let $\Hilb^{A_1,\ldots, A_s; \mu}(X) \subset \GHilb^{[k]}(X)$ be a geometric subset with a fixed partition $\mu=(\mu_1,\ldots, \mu_s)$ of the labeled points $\{1,\ldots, k\}$ into the points of the support. 
\begin{enumerate}
\item For any $\a=(\a_1,\ldots \a_t) \in \Pi(s)$ let $f_\a:\Hilb^{[k]}(X) \dasharrow \Hilb^{[\mu|\a]}(X)$ be the natural birational map whose domain consists of points $(Z,(x_i))$ where $x_i \neq x_j$ if $i \nsim_{\mu|\a} j$. We define 
\[\Hilb^{A_1,\ldots, A_s; \mu|\a}(X)=\overline{f_\a(\Hilb^{A_1,\ldots, A_s; \mu}(X))}\]
to be the closure in $\Hilb^{[\mu|\a]}(X)=\prod_{i=1}^t \Hilb^{[\beta_i]}(X)$ where $(\b_1,\ldots, \b_t )=\mu|\a$. 
In short, $\Hilb^{A_1,\ldots, A_s; \mu|\a}(X)$ parametrises tuples $(\xi_1,\ldots, \xi_t)$ where $\xi_i$ is a length $\sum_{j\in \a_i}|\mu_j|$ which records the collision of the subschemes supported at points labeled by $\a_i$. 
\item The punctual part sits over the corresponding diagonal:
\[\Hilb^{A_1,\ldots, A_s;\mu|\a}_0(X)=\HC^{-1}(\Delta_{\mu|\a}).\]
\end{enumerate}
\end{definition}

\begin{definition}[Fully nested Hilbert schemes of geometric subsets] The fully nested geometric subset $N^{A_1,\ldots, A_s; \mu}(X)$ is the closure of the graph of the rational map 
\[(\supp,(f_\a)_{\a \in \Pi(s)}): \Hilb^{A_1,\ldots, A_s; \mu}(X) \dashrightarrow \Pi_{\a \in \Pi(s)} \GHilb^{[\a]}(X) \times \Pi_{\a\in \Pi(s)} \Hilb^{[\mu|\a]}(X)\]
endowed with the following natural projections:
\begin{itemize}
\item support map:  $\supp: \N^{A_1,\ldots, A_s; \mu}(X) \to \N^{s}(X)$. 
\item projections to the approximation sets: $\pi_\a: \N^{A_1,\ldots, A_s;\mu}(X) \to \Hilb^{A_1,\ldots, A_s;\mu|\a}(X)$.
\end{itemize}
We will work with a fixed $\mu$, and drop it from the upper index, simply writing $\N^{A_1,\ldots, A_s}(X)$. The $\a$-punctual part sits over the corresponding diagonal:
\[\N^{A_1,\ldots, A_s;\mu|\a}_0(X)=\HC^{-1}(\Delta_{\mu|\a}).\]
\end{definition}

\begin{definition}[Approximating bundles] Let $\a=(\a_1,\ldots, \a_s) \in \Pi(s)$ be a partition. Pull-back along the projection map $\pi_\a: \N^{A_1,\ldots, A_s; \mu}(X)  \to \GHilb^{[\mu|\a]}(X)$ defines an approximating bundle $\pi_\a^*(F^{[\mu|\a]})$ over $\N^{A_1,\ldots, A_s; \mu}(X)$, which satisfies 
 \[\pi_\a^*F^{[\mu|\a]}=\pi_{\mu|\a_1}^*F^{[[\mu|\a_1]]} \oplus \ldots \oplus \pi_{\mu|\a_s}^*F^{[[\mu|\a_s]]}.\]
We will loosely use the shorthand notation $F^{[\mu|\a]}$ for the bundle $\pi_\a^*(F^{[\mu|\a]})$. 
\end{definition}


\subsection{Fibration of the fully nested Hilbert scheme over the flag manifold}

Recall the curvilinear part of $\N^{k}(X)$, defined as $\CN^{k}(X)=\pi_\L^{-1}(\CHilb^{k}(X))$ where $\L=\{0,1,\ldots, k\}$ is the trivial partition. The punctual curvilinear component supported at $p\in X$ is 
\[\CN^{k}_p(X)=\pi_\L^{-1}(\CHilb^{k}_p(X))=\CN^{k}(X) \cap \HC^{-1}(\{p,\ldots, p\}).\]

We follow the same argument which resulted in diagram \eqref{diagramfour}. Let $\bU \subset TX$ be a small $\GL_X$-invariant tubular neighborhood of the zero section, with the exponential map $\exp: \bU \to X$, which identifies $U_x$ with $\exp(\bU_x) \subset X$. We may assume that $\bU=\GL_X \times_{\GL(n)} U$ for some $U \subset \CC^n$. We define 
\[\widehat{\BN}^{k}(\bU)=\pi_\Lambda^{-1}(\widehat{\BHilb}^{k}(\bU)).\]
We get the following extension of diagram \eqref{diagramfour}:
\begin{equation}\label{diagramfive}
\xymatrix{
\widehat{\CN}^{k}(X) \ar@{^{(}->}[r]  \ar[d]^{\pi_\Lambda} & \widehat{\BN}^{k}(\bU) \ar[d]^{\pi_\Lambda}  \ar@{^{(}->}[r]^-{\iota} & \prod\limits_{(\a_1,\ldots, \a_s) \in \Pi(k)} \prod\limits_{i=1}^s \widetilde{\grass}_{|\a_i|}(S^\bullet TX) \ar[d]^{\pi_\Lambda}\\ 
\widehat{\CHilb}^{k}(X) \ar@{^{(}->}[r] \ar[d]^\rho &  \widehat{\BHilb}^{k}(\bU) \ar[ld]  \ar@{^{(}->}[r]^-{\iota} & \widehat{\grass}_{k}(S^\bullet TX) \ar[lld]^{\rho_{\grass}} \\
\flag_{k-1}(TX) \ar[d]^\mu & \\
X &} 
\end{equation}
and $\widehat{\BN}^{k}(\bU) \to \widehat{\BHilb}^{k}(\bU)$ is a birational morphism. When $X=\CC^n$, all vertical maps in diagram \eqref{diagramfive} are $\GL(n)$-equivariant, and we can take $\bU=T\CC^n$. 

Finally, we extend this diagram with the nested geometric subsets. Let $(A_1,\ldots, A_s)$ be a curvilinear regular tuple as in Definition \ref{def:curvilinear} and Definition \ref{def:regular}, hence 
\[\Hilb^{A_1+\ldots +A_s}(X) = \CHilb^k(X) \cap \Hilb^{A_1,\ldots, A_s}(X).\]
Then we can define the partial blow-up $\widehat{\CHilb}^{A_1+\ldots +A_s}(X) \subset \widehat{\CHilb}^{k}(X)$ by sweeping out the fiber over the distinguished flag $\ff$ as before:
\[\rho^{-1}(\ff)=\widehat{\Hilb}^{A_1+\ldots +A_s}(X)_\ff=\widehat{\CHilb}^{k}(X)_\ff \cap \Hilb^{A_1+\ldots +A_s}(X)\]
which parametrises subschemes compatible with the distinguished flag $\ff$. We define 
\[\widehat{\CN}^{A_1+\ldots +A_s}(X)=\pi_{\Lambda}^{-1}(\widehat{\Hilb}^{A_1+\ldots +A_s}(X))\]
and these spaces fit into the diagram

\begin{equation}\label{diagramsix}
\xymatrix{
\widehat{\CN}^{A_1+\ldots +A_s}(X) \ar@{^{(}->}[r]  \ar[d]^{\pi_\Lambda} & \widehat{\CN}^{k}(X) \ar@{^{(}->}[r] \ar[d]^{\pi_{\Lambda}}  & \widehat{\BN}^{k}(\bU) \ar[d]^{\pi_\Lambda}  \\
\widehat{\Hilb}^{A_1+\ldots +A_s}(X) \ar@{^{(}->}[r] \ar[dr]^\rho & \widehat{\CHilb}^{k}(X) \ar@{^{(}->}[r] \ar[d]^\rho & \widehat{\BHilb}^{k}(\bU) \ar[ld]  \\
& \flag_{k-1}(TX) \ar[d]^\mu & \\
& X &} 
\end{equation}

\section{Equivariant localisation and multidegrees}\label{sec:equiv}

This section is a brief of equivariant cohomology and localisation. For
more details, we refer the reader to Berline--Getzler--Vergne \cite{bgv} and B\'erczi--Szenes \cite{bsz}. 

Let $\kt\cong U(1)^m$ be the maximal compact subgroup of
$T\cong(\CC^*)^m$, and denote by $\mathfrak{t}$ the Lie algebra of $\kt$.  
Identifying $T$ with the group $\CC^n$, we obtain a canonical basis of the weights of $T$:
$\lambda_1,\ldots ,\l_n\in\mathfrak{t}^*$. 

For a manifold $X$ endowed with the action of $\kt$, one can define a
differential $d_\kt$ on the space $S^\bullet \mathfrak{t}^*\otimes
\Omega^\bullet(X)^\kt$ of polynomial functions on $\mathfrak{t}$ with values
in $\kt$-invariant differential forms by the formula:
\[   
[d_\kt\alpha](X) = d(\alpha(X))-\iota(X_X)[\alpha(X)],
\]
where $X\in\mathfrak{t}$, and $\iota(X_X)$ is contraction by the corresponding
vector field on $X$. A homogeneous polynomial of degree $d$ with
values in $r$-forms is placed in degree $2d+r$, and then $d_\kt$ is an
operator of degree 1.  The cohomology of this complex--the so-called equivariant de Rham complex, denoted by $H^\bullet_T(X)$, is called the $T$-equivariant cohomology of $X$. Elements of $H_T^\bullet (X)$ are therefore polynomial functions $\mathfrak{t} \to \Omega^\bullet(X)^K$ and there is an integration (or push-forward map) $\int: H_T^\bullet(X) \to H_T^\bullet(\mathrm{point})=S^\bullet \mathfrak{t}^*$ defined as  
\[(\int_X \alpha)(X)=\int_X \alpha^{[\mathrm{dim}(X)]}(X) \text{ for all } X\in \mathfrak{t}\]
where $\alpha^{[\mathrm{dim}(X)]}$ is the differential-form-top-degree part of $\alpha$. The following proposition is the Atiyah-Bott-Berline-Vergne localisation theorem in the form of \cite{bgv}, Theorem 7.11. 
\begin{theorem}[(Atiyah-Bott \cite{atiyahbott}, Berline-Vergne \cite{berlinevergne})]\label{abbv} Suppose that $X$ is a compact complex manifold and $T$ is a complex torus acting smoothly on $X$, and the fixed point set $X^T$ of the $T$-action on X is finite. Then for any cohomology class $\a \in H_T^\bullet(X)$
\[\int_X \alpha=\sum_{f\in X^T}\frac{\a^{[0]}(f)}{\mathrm{Euler}^T(T_fX)}.\]
Here $\mathrm{Euler}^T(T_fX)$ is the $T$-equivariant Euler class of the tangent space $T_fX$, and $\alpha^{[0]}$ is the differential-form-degree-0 part of $\alpha$. 
\end{theorem}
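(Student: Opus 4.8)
The plan is to follow the Berline--Getzler--Vergne strategy \cite{bgv}: first reduce to the compact torus, then show that $\alpha$ becomes equivariantly exact away from the fixed locus so that $\int_X\alpha$ collapses to a sum of local contributions, and finally evaluate each contribution by linearising the action near the corresponding fixed point. Since $H_T^\bullet(X)$ for the complex torus $T$ agrees with $H_\kt^\bullet(X)$ for its maximal compact $\kt\cong U(1)^m$, I would work throughout with $\kt$-invariant differential forms and the Cartan differential $d_\kt$ described in the text. Both sides of the desired identity are polynomial functions of the argument $\xi\in\liet$, so it suffices to verify the equality after evaluating at a \emph{generic} $\xi$, ranging over a dense open subset of $\liet$; fix such a $\xi$ from now on.

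Choose $\xi\in\liet$ so that its fundamental vector field $\xi_X$ on $X$ vanishes exactly on $X^T$. Picking a $\kt$-invariant Riemannian metric, set
\[\theta=\frac{\langle \xi_X,-\rangle}{|\xi_X|^2}\quad\text{on } X\setminus X^T,\]
a $\kt$-invariant $1$-form with $\iota(\xi_X)\theta=1$, so that $(d_\kt\theta)(\xi)=d\theta-1$. Its form-degree-$0$ part is the nonzero constant $-1$, hence $(d_\kt\theta)(\xi)$ is invertible with inverse $-\sum_{j\ge 0}(d\theta)^j$, a finite sum because $(d\theta)^j=0$ once $2j>\dim_\RR X$. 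As $d_\kt^2$ annihilates invariant forms, $d_\kt(d_\kt\theta)=0$, and therefore $d_\kt\big(d_\kt\theta\big)^{-1}=0$ as well. Using $d_\kt\alpha=0$, a short computation (the inverse factor is even, hence central and $d_\kt$-closed) then gives, on $X\setminus X^T$,
\[\alpha=d_\kt\!\Big(\theta\,\alpha\,\big(d_\kt\theta\big)^{-1}\Big),\]
so that $\alpha$ is equivariantly exact there.

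Next I would excise small $\kt$-invariant geodesic balls $B_\epsilon(f)$ about each $f\in X^T$ and apply Stokes' theorem to this primitive on $X\setminus\bigcup_f B_\epsilon(f)$. The interior integral vanishes, leaving
\[\int_X\alpha=-\lim_{\epsilon\to 0}\sum_{f\in X^T}\int_{\partial B_\epsilon(f)}\theta\,\alpha\,\big(d_\kt\theta\big)^{-1}.\]
It then remains only to identify each boundary term with $\alpha^{[0]}(f)/\Euler^T(T_fX)$.

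This final evaluation is the genuinely local step and the one I expect to be the main obstacle. Near $f$ I would linearise the action through an equivariant exponential chart identifying a neighbourhood of $f$ with a neighbourhood of $0$ in $T_fX\simeq\CC^n$, on which $\kt$ acts with weights $\beta_1,\ldots,\beta_n$ whose product is $\Euler^T(T_fX)=\prod_i\beta_i$. On this linear model $\xi_X$ becomes the standard rotation vector field, $\theta$ and $d\theta$ can be written down explicitly, and one evaluates the integral of the top-degree component of $\theta\,\alpha\,\big(d_\kt\theta\big)^{-1}$ over the shrinking sphere $\partial B_\epsilon(f)$. A direct calculation — equivalently a Gaussian integral over $\CC^n$ — shows that as $\epsilon\to 0$ only the constant term $\alpha^{[0]}(f)$ survives, divided by $\prod_i\beta_i(\xi)$. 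Summing over all $f\in X^T$ and invoking the polynomial-dependence argument to pass from generic $\xi$ to all of $\liet$ yields the stated localisation formula.
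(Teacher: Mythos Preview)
The paper does not give its own proof of this statement: it is quoted as a known result, with an explicit pointer to \cite[Theorem~7.11]{bgv}. Your proposal is precisely the standard Berline--Vergne argument recorded there (construct an equivariant primitive $\theta\,\alpha\,(d_\kt\theta)^{-1}$ on the complement of the fixed locus, apply Stokes on $X$ minus small invariant balls, and compute the local contributions by linearising at each fixed point), and it is correct as written.
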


The right hand side in the localisation formula considered in the fraction field of the polynomial ring of $H_T^\bullet (\mathrm{point})=H^\bullet(BT)=S^\bullet \mathfrak{t}^*$ (see more on details in Atiyah--Bott \cite{atiyahbott} and \cite{bgv}). Part of the statement is that the denominators cancel when the sum is simplified.

\subsection{Equivariant Poincar\'e duals and multidegrees}
\label{subsec:epdmult} 

Restricting the equivariant de Rham complex to compactly supported (or quickly
decreasing at infinity) differential forms, one obtains the compactly
supported equivariant cohomology groups $ H^\bullet_{\kt,\mathrm{cpt}}(X)
$. Clearly $H^\bullet_{\kt,\mathrm{cpt}}(X) $ is a module over
$H^\bullet_\kt(X)$. For the case when $X=W$ is an $N$-dimensional
complex vector space, and the action is linear, one has
$H^\bullet_\kt(W)= S^\bullet\mathfrak{t}^*$ and $ H^\bullet_{\kt,\mathrm{cpt}}(W) $ is
a free module over $H^\bullet_\kt(W)$ generated by a single element of
degree $2N$:
\begin{equation}
  \label{thomg}
   H^\bullet_{\kt,\mathrm{cpt}}(W) = H^\bullet_{\kt}(W)\cdot\Thom_{\kt}(W)
\end{equation}

Fixing coordinates $y_1,\dots,y_N$ on $W$, in which the $T$-action is
diagonal with weights $\eta_1,\ldots,  \eta_N$, one can write an explicit
representative of  $\Thom_{\kt}(W)$ as follows:
\[   \Thom_{\kt}(W) = 
e^{-\sum_{i=1}^N|y_i|^2}\sum_{\sigma\subset\{1,\ldots , N\}}
\prod_{i\in\sigma}\eta_i/2\cdot\prod_{i\notin \sigma}dy_i\,d\bar y_i
\]

We will say that an algebraic variety has dimension $d$ if its
maximal-dimensional irreducible components are of dimension $d$.  A
$T$-invariant algebraic subvariety $\Sigma$ of dimension $d$ in $W$
represents $\kt$-equivariant $2d$-cycle in the sense that
\begin{itemize}
\item a compactly-supported equivariant form $\mu$ of degree $2d$ is
  absolutely integrable over the components of maximal dimension of
  $\Sigma$, and $\int_\Sigma\mu\in S^\bullet \mathfrak{t}$;
\item if $d_\kt\mu=0$, then $\int_\Sigma\mu$ depends only on the class
  of $\mu$ in $ H^\bullet_{\kt,\mathrm{cpt}}(W) $,
\item and $\int_\Sigma\mu=0$  if $\mu=d_\kt\nu$ for a
  compactly-supported equivariant form $\nu$.
\end{itemize}

\begin{definition} \label{defepd} Let $\Sigma$ be an $T$-invariant algebraic
  subvariety of dimension $d$ in the vector space $W$. Then the
  equivariant Poincar\'e dual of $\Sigma$ is the polynomial on $\mathfrak{t}$
  defined by the integral
\begin{equation}
 \label{vergneepd}
 \epd\Sigma = \frac1{(2\pi)^d}\int_\Sigma\Thom_{\kt}(W).
\end{equation}  
\end{definition}
\begin{remark}
  \begin{enumerate}
  \item An immediate consequence of the definition is that for an equivariantly
closed differential form $\mu$ with compact support, we have
\[  \int_\Sigma\mu = \int_W \epd\Sigma\cdot\mu.
\]
This formula serves as the motivation for the term {\em equivariant
  Poincar\'e dual.}
\item This definition naturally extends to the case of an analytic
  subvariety of $\CC^n$  defined in the neighborhood of the origin, or
  more generally, to any $T$-invariant cycle in $\CC^n$.
  \end{enumerate}
\end{remark}

Another terminology for the equivariant Poincar\'e dual is {\em multidegree}, which is close in spirit to the original
construction of Joseph \cite{joseph}. Let  $\Sigma \subset W$ be a $T$-invariant
subvariety. Then we have
\[       \epd{\Sigma,W}_T=\mdeg{I(\Sigma),\CC[y_1,\ldots , y_N]}.
\] 

Some basic properties of the equivariant Poincar\'e dual are listed in \cite{bsz}, these are: Positivity, Additivity, Deformation invariance, Symmetry and a formula for complete intersections. Using these properties one can easily describe an algorithm for
computing $\mdeg{I,S}$ as follows (see Xiller--Sturmfels \cite[\S8.5]{milsturm}, Vergne \cite{voj} and \cite{bsz} for details). %

\subsection{The Rossman formula} \label{subsec:rossman} 

The Rossmann equivariant localisation formula is a variant of the Atiyah-Bott/Berline-Vergne localisation for singular varieties sitting in a smooth ambient space. 
Let $Z$ be a complex manifold with a holomorphic $T$-action, and let
$X\subset Z$ be a $T$-invariant analytic subvariety with an isolated
fixed point $p\in X^T$. Then one can find local analytic coordinates
near $p$, in which the action is linear and diagonal. Using these
coordinates, one can identify a neighborhood of the origin in $\TT_pZ$
with a neighborhood of $p$ in $Z$. We denote by $\tc_pX$ the part of
$\TT_pZ$ which corresponds to $X$ under this identification;
informally, we will call $\tc_pX$ the $T$-invariant {\em tangent cone}
of $X$ at $p$. This tangent cone is not quite canonical: it depends on
the choice of coordinates; the equivariant dual of
$\Sigma=\tc_pX$ in $W=\TT_pZ$, however, does not. Rossmann named this
 the {\em equivariant multiplicity of $X$ in $Z$ at $p$}:
\begin{equation}\label{emult}
   \emu_p[X,Z] \overset{\mathrm{def}}= \epd{\tc_pX,\TT_pZ}.
\end{equation}

\begin{remark}
In the algebraic framework one might need to pass to the {\em
tangent scheme} of $X$ at $p$ (cf. Fulton \cite{fulton}). This is canonically
defined, but we will not use this notion.
\end{remark}
The analog of the Atiyah-Bott formula for singular subvarieties of smooth ambient manifolds is the following statement.
\begin{proposition}[Rossmann's localisation formula \cite{rossmann}]\label{rossman} Let $\mu \in H_T^*(Z)$ be an equivariant class represented by a holomorphic equivariant map $\mathfrak{t} \to\Omega^\bullet(Z)$. Then 
\begin{equation}
  \label{rossform}
  \int_X\mu=\sum_{p\in X^T}\frac{\emu_p[X,Z]}{\mathrm{Euler}^T(\TT_pZ)}\cdot\mu^{[0]}(p),
\end{equation}
where $\mu^{[0]}(p)$ is the differential-form-degree-zero component
of $\mu$ evaluated at $p$.  
\end{proposition}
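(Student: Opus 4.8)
The plan is to reduce the integral over the singular variety $X$ to an integral over the smooth ambient space $Z$ by means of the equivariant Poincar\'e dual, and then to apply the Atiyah--Bott--Berline--Vergne formula (Theorem \ref{abbv}) on $Z$, whose fixed point set is finite by hypothesis. Concretely, let $[X]=\epd{X,Z}\in H_T^\bullet(Z)$ denote the equivariant Poincar\'e dual of the $T$-invariant cycle $X$, characterised by the defining property recorded in \S\ref{subsec:epdmult}, namely $\int_X\mu=\int_Z [X]\cdot\mu$ for every equivariantly closed $\mu$. (Here one works with compactly supported representatives, or restricts to a $T$-invariant neighborhood of $X^T$, so that all integrals converge and Theorem \ref{abbv} applies; we may thus assume $Z$ compact for the localisation step.) I would first invoke this identity to rewrite the left-hand side as $\int_Z [X]\cdot\mu$, an integral of a genuinely closed equivariant form over a smooth space.

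Next I would apply Theorem \ref{abbv} to the product class $[X]\cdot\mu$ on $Z$. Since evaluation at a fixed point kills every form of positive degree, the form-degree-zero part of $[X]\cdot\mu$ at $p$ factors as $[X]^{[0]}(p)\cdot\mu^{[0]}(p)$, so that
\[\int_Z [X]\cdot\mu=\sum_{p\in Z^T}\frac{[X]^{[0]}(p)}{\Euler^T(\TT_pZ)}\,\mu^{[0]}(p).\]
The problem then reduces to identifying the restricted class $[X]^{[0]}(p)$ for each $p\in Z^T$.

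The geometric heart of the argument is this local computation. For $p\notin X$ one chooses a $T$-invariant neighborhood of $p$ disjoint from $X$; there the dual is represented by zero, so such fixed points drop out and the sum collapses to $X^T$. For $p\in X$ I would use the equivariant linearisation at the fixed point (the analytic coordinates near $p$ in which the $T$-action is linear and diagonal, as fixed in the definition of $\emu_p[X,Z]$) to identify a neighborhood of $p$ in $Z$ with a neighborhood of the origin in $\TT_pZ$, under which $X$ becomes a germ of $T$-invariant analytic subvariety. The restriction to $p$ of the global dual then equals the local equivariant dual of this germ in $\TT_pZ$; invoking the deformation invariance of the equivariant Poincar\'e dual (\S\ref{subsec:epdmult}) I may degenerate the germ to its tangent cone $\tc_pX$ along a generic one-parameter subgroup of $T$ contracting towards $p$, whence
\[[X]^{[0]}(p)=\epd{\tc_pX,\TT_pZ}=\emu_p[X,Z].\]
Substituting this into the sum yields the claimed formula.

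The main obstacle I anticipate is precisely this last identification: showing that the restriction to $p$ of the globally defined equivariant dual class coincides with the equivariant multiplicity, which is a priori a coordinate-dependent local invariant of the tangent cone. Making this rigorous requires both the equivariant linearisation at $p$ and a careful application of deformation invariance to pass from the analytic germ of $X$ to $\tc_pX$; one must verify that the contracting one-parameter subgroup realises the tangent cone as a genuine $T$-equivariant (flat) limit, and that the support condition underlying the deformation-invariance property is met in the possibly non-compact neighborhood. Once this is in place, the remaining steps --- equivariant Poincar\'e duality and the ABBV localisation --- are formal consequences of the framework of \S\ref{subsec:epdmult} and Theorem \ref{abbv}.
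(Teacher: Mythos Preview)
The paper does not give a proof of this proposition: it is stated as a citation from Rossmann \cite{rossmann}, with the surrounding text only setting up the notion of equivariant multiplicity. There is therefore no ``paper's own proof'' to compare against.

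Your argument is the standard derivation and is essentially correct: rewrite $\int_X\mu$ as $\int_Z\epd{X,Z}\cdot\mu$ using the defining property of the equivariant Poincar\'e dual, apply Atiyah--Bott--Berline--Vergne on the smooth $Z$, and then identify the restriction of $\epd{X,Z}$ at each fixed point with the equivariant multiplicity via local linearisation and deformation to the tangent cone. One point to be aware of is that the paper's Definition~\ref{defepd} of the equivariant Poincar\'e dual is stated only for $T$-invariant subvarieties of a linear representation $W$, not for a general ambient manifold $Z$; your use of a global class $\epd{X,Z}\in H_T^\bullet(Z)$ therefore tacitly extends the notion beyond what the paper writes down. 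In practice this is harmless precisely because of the linearisation step you invoke: one works in a $T$-invariant coordinate chart identifying a neighborhood of $p$ in $Z$ with a neighborhood of $0$ in $\TT_pZ$, and the equivariant dual there is exactly the object defined in \S\ref{subsec:epdmult}. Your discussion of the main obstacle --- making the degeneration to the tangent cone $T$-equivariant and flat so that deformation invariance applies --- correctly identifies where the analytic work lies.
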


\section{Proof of Theorem \ref{main1}}

Integration on punctual geometric subsets is based on the Kazarian model, and we use equivariant localisation over the non-associate Hilbert scheme, which is a smooth ambient space for the punctual Hilbert scheme. 

\subsection{Localisation on nonassociate Hilbert schemes}

Let $\frakm \subset \CC^n$ be the maximal ideal at the origin and let $A=\frakm/I$ be a nilpotent algebra of dimension $k-1$. As a vector space, $N=\frakm/I$ can be endowed with several filtrations $N_\bullet$; we fix one of length $r$ and dimension vector $d=(d_1,\ldots, d_r)$. We will work only with natural filtrations of $N$ where automorphisms of the algebra $A$ preserves also the filtration.

Recall form \S \ref{subsec:kazarianmodel} (see \eqref{mrtower}) the nonassociative Hilbert scheme $X_r$ constructed as a tower of Grassmannian bundles
\[X_r \to X_{r-1} \to \ldots \to X_1 \to pt\]
which forms a smooth ambient space of the punctual geometric subset $Q(A)=\Hilb^A(\CC^n)$:
\[Q(A) \subset \Hilb_d(\CC^n) \subset X_r=\tilde{X}_r/\GL_d=\mathrm{Alg}(d,N)/\!/ \GL_d\]
where $\tilde{X}_r$ is an open dense subset (stable set) of the vector space of filtered commutative algebra structures $\mathrm{Alg}(d,N)$ on $N$. This stability condition comes from the surjectivity of a certain map, see \S \ref{subsec:kazarianmodel}. The Hilbert scheme is cut out by the associativity equations:
\[\Hilb_d(\CC^n)=\mathrm{Assoc}(d,N)/\!/\GL_d \subset X_r\]
and $Q(A)$ is cut out by further equations ensuring that the corresponding filtered associative algebra is isomorphic to $A$. The tower $X_r$ comes with a flag of canonical bundles $(D_i \subset V_i)$, see \eqref{mrbundles}. 

Assume that $\a$ is a closed form in the Chern roots of the tautological bundles $D_i/D_{i-1}$ restricted to $Q(A) \subset X_r$. By definiton of the Poincar\'e dual  
\[\int_{Q(A)}\a=\int_{X_r}\a \cdot \epd{Q(A)\subset X_r}\]
holds. Since $X_r=\mathrm{Alg}(d,N)/\!/\GL_d$ is a tower of Grassmannian bundles, $H^*(X_r)$ is generated by the Chern roots of the tautological bundles $D_i/D_{i-1}$ and therefore 
\[\epd{Q(A)\subset X_r}=\mathrm{PD}(z_1,\ldots, z_{k-1})\]
is a polynomial in these. From the GIT description it also clear that in fact 
\[\mathrm{PD}(z_1,\ldots, z_{k-1})=\mathrm{ePD}[\mathrm{Assoc}_A(d,N) \subset \mathrm{Alg}(d,N)]\]
is the equivariant Poincar\'e dual with respect to the maximal torus $(\CC^*)^{k-1} \subset \GL(d)$. The weight of this torus action on the coordinate $q_{i,j}^\ell$ of $\mathrm{Alg}(d,N)$ is 
\[w(z_{i,j}^\ell)=z_i+z_j-z_\ell.\]
and therefore $\mathrm{PD}(z_1,\ldots, z_{k-1})$ is in fact a polynomial in the linear forms $z_i+z_j-z_\ell$ with $w(i)+w(j)\le w(\ell)$.

We use the residual $\GL_n$ action to develop equivariant localisation on $X_r$. The $\GL_n$ action on $X_1=\grass_{d_1}(\CC^n)$ induces a natural $\GL_n$ action on $X_r$ and the displayed embeddings of $Q(A)$ and $\Hilb_d(\CC^n)$ are equivariant with respect to this action. Various alternative methods are known for the computation of the Gysin homomorphism, in particular, Kazarian \cite{kazarian} uses the machinery developed in \cite{kazarian2}. Although the final residue formula is the same, it is crucial to us to apply equivariant localisation in order to study fixed point contributions and to derive the vanishing theorem.

\subsection{From Atiyah-Bott to residues on $X_1=\grass_{d_1}(\CC^n)$}\label{subsec:m1}

Let $\l_1,\ldots, \l_n \in \mathfrak{t}^*$ denote the weights of the diagonal torus $T^{n}\subset \GL_n$ action on $\CC^n$ with respect to the basis $\{e_1,\ldots, e_n\}$. This induces an action on $X_1=\grass_{d_1}(\CC^d)$. Fixed points are coordinate subspaces $\mathrm{Span}(e_{\s(1)},\ldots ,e_{\s(d_1)})$ indexed by pemutations $\s \in \sg n/(\sg {d_1} \times \sg {n-d_1})$. Let $\a$ be a Chern polynomial of the tautological rank $d_1$ bundle over $\grass_{d_1}(\CC^n)$ The Atiyah-Bott localisation gives us 
\begin{equation}
\int_{\grass_{d_1}(\CC^n)} \a=\sum_{\sigma\in\sg n/(\sg {d_1} \times \sg {n-d_1})}
\frac{Q(\lambda_{\sigma(1)},\ldots ,\lambda_{\sigma(d_1)})}
{\prod_{1\leq m\leq d_1}\prod_{i=d_1+1}^n(\lambda_{\sigma\cdot
    i}-\lambda_{\sigma\cdot m})}
\end{equation}

The right hand side can be transformed into an iterated residue. To describe this formula, we recall the notion of an {\em iterated
  residue} (cf. e.g. \cite{szenes}) at infinity.  Let
$\omega_1,\dots,\omega_N$ be affine linear forms on $\CC^k$; denoting
the coordinates by $z_1,\ldots, z_k$, this means that we can write
$\omega_i=a_i^0+a_i^1z_1+\ldots + a_i^kz_k$. We will use the shorthand
$h(\bz)$ for a function $h(z_1\ldots z_k)$, and $\dbz$ for the
holomorphic $n$-form $dz_1\wedge\dots\wedge dz_k$. Now, let $h(\bz)$
be an entire function, and define the {\em iterated residue at infinity}
as follows:
\begin{equation}
  \label{defresinf}
 \ires \frac{h(\bz)\,\dbz}{\prod_{i=1}^N\omega_i}
  \overset{\mathrm{def}}=\left(\frac1{2\pi i}\right)^k
\int_{|z_1|=R_1}\ldots
\int_{|z_k|=R_k}\frac{h(\bz)\,\dbz}{\prod_{i=1}^N\omega_i},
 \end{equation}
 where $1\ll R_1 \ll \ldots \ll R_k$. The torus $\{|z_m|=R_m;\;m=1 \ldots
 k\}$ is oriented in such a way that $\res_{z_1=\infty}\ldots
 \res_{z_k=\infty}\dbz/(z_1\cdots z_k)=(-1)^k$.
We will also use the following simplified notation: $\sires \overset{\mathrm{def}}=\ires.$

In practice, one way to compute the iterated residue \eqref{defresinf} is the following algorithm: for each $i$, use the expansion
 \begin{equation}
   \label{omegaexp}
 \frac1{\omega_i}=\sum_{j=0}^\infty(-1)^j\frac{(a^{0}_i+a^1_iz_1+\ldots
   +a_{i}^{q(i)-1}z_{q(i)-1})^j}{(a_i^{q(i)}z_{q(i)})^{j+1}},
   \end{equation}
   where $q(i)$ is the largest value of $m$ for which $a_i^m\neq0$,
   then multiply the product of these expressions with $(-1)^kh(z_1\ldots
   z_k)$, and then take the coefficient of $z_1^{-1} \ldots z_k^{-1}$
   in the resulting Laurent series.

\begin{proposition}[{\rm B\'erczi--Szenes \cite{bsz}, Proposition 5.4}]\label{prop:ABtoresidue} For any homogeneous polynomial $\a(\bz)$ on $\CC^d$ we have
\begin{equation}\label{flagres}
\sum_{\sigma\in\sg n/\sg{d}. \times \sg{n-d}}
\frac{\a(\lambda_{\sigma(1)},\ldots ,\lambda_{\sigma(d)})}
{\prod_{1\leq m\leq d}\prod_{i=d+1}^n(\lambda_{\sigma\cdot
    i}-\lambda_{\sigma\cdot m})}=\sires
\frac{\prod_{1\leq m, l\leq d}(z_m-z_l)\,\a(\bz)\dbz}
{\prod_{l=1}^d\prod_{i=1}^n(\lambda_i-z_l)}.
\end{equation}
Moreover, the terms on the left hand side correspond to torus fixed points which in turn correspond to poles of the rational expression on the right hand side. 
\end{proposition}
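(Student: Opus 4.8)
The plan is to evaluate the iterated residue on the right-hand side of \eqref{flagres} directly, by turning each residue at infinity into a sum of residues at the finite poles, and then to match the resulting expression with the Atiyah--Bott fixed-point sum on the left. First I would record the one-variable principle that for a rational $1$-form the residue at infinity equals minus the sum of the residues at the finite poles (the residues on $\PP^1$ sum to zero, and the convention $\res_{z=\infty}dz/z=-1$ fixes the sign). Viewed in the variable $z_l$, the integrand has poles only at $z_l=\lambda_i$, coming from the factor $\prod_{i=1}^n(\lambda_i-z_l)$; crucially these poles sit at constants and do not move with the other variables, so the nested contours $1\ll R_1\ll\cdots\ll R_d$ do not interact and the iterated residue factors into an iterated sum of finite residues, independently of the ordering. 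Carrying this out in each $z_l$ writes $\sires(\cdots)$ as a sum over tuples $(i_1,\dots,i_d)\in\{1,\dots,n\}^d$, the term indexed by $(i_1,\dots,i_d)$ being the numerator evaluated at $z_l=\lambda_{i_l}$ divided by $\prod_{l=1}^d\prod_{i\neq i_l}(\lambda_i-\lambda_{i_l})$, where the sign $(-1)^d$ coming from the $d$ residues at infinity is cancelled by the $(-1)^d$ produced by the simple poles of the $(\lambda_{i_l}-z_l)$-factors.

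Next I would exploit the Vandermonde factor in the numerator: evaluated at $z_l=\lambda_{i_l}$ it becomes $\prod_{m\neq l}(\lambda_{i_m}-\lambda_{i_l})$, which vanishes as soon as two indices coincide. Hence only tuples with pairwise distinct $i_l$ survive, and these are exactly the torus-fixed points of $\grass_d(\CC^n)$, the coordinate subspaces $\mathrm{Span}(e_{i_1},\dots,e_{i_d})$; this is the geometric meaning of the final clause, that the poles of the rational form are in bijection with the fixed points. For a surviving tuple with underlying set $S=\{i_1,\dots,i_d\}$ I would split each denominator $\prod_{i\neq i_l}(\lambda_i-\lambda_{i_l})$ into an internal part, over $i\in S\setminus\{i_l\}$, and an external part, over $i\notin S$. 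The product over $l$ of the internal parts equals $\prod_{m\neq l}(\lambda_{i_m}-\lambda_{i_l})$, so it cancels the Vandermonde numerator exactly, signs included, while the product of the external parts is $\prod_{m\in S}\prod_{i\notin S}(\lambda_i-\lambda_m)$, which is precisely the equivariant Euler class $\prod_{1\le m\le d}\prod_{i>d}(\lambda_{\sigma i}-\lambda_{\sigma m})$ of the tangent space $\Hom(S,Q)$ at the corresponding fixed point. Thus each fixed point contributes $\a(\lambda_{i_1},\dots,\lambda_{i_d})/\prod_{m\le d<i}(\lambda_{\sigma i}-\lambda_{\sigma m})$, which is the Atiyah--Bott summand.

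The main obstacle is the remaining combinatorial and sign bookkeeping: the residue naturally produces a sum over \emph{ordered} injective tuples, weighted by the antisymmetric Vandermonde, whereas the localisation sum runs over the \emph{unordered} $d$-subsets indexing cosets in $\sg n/(\sg d\times\sg{n-d})$. I would organise this by grouping the ordered tuples according to their underlying set $S$ and tracking how the sign of the Vandermonde under a reordering of $(i_1,\dots,i_d)$ interacts with the evaluation $\a(\lambda_{i_1},\dots,\lambda_{i_d})$; this is exactly the step that makes the Vandermonde factor indispensable, that pins down the normalisation, and where the orientation conventions defining $\sires$ must be handled with care. An alternative route, which sidesteps part of this bookkeeping, is induction on $d$, peeling off one residue variable at a time against the projective-bundle structure of $\grass_d(\CC^n)$ and reducing to the already-established one-variable ($d=1$) case, where the identity is the classical partial-fraction expansion of $\a/\prod_i(\lambda_i-z)$. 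Once the ordered-versus-unordered reconciliation is settled, summing the fixed-point contributions yields the left-hand side, and the correspondence between poles and fixed points asserted in the final sentence is a byproduct of the second step.
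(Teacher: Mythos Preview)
Your approach is correct and is precisely the standard argument from \cite{bsz}, which the present paper merely cites without reproducing a proof; there is nothing to compare against here beyond that reference.

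The one point you flag but leave open---the ordered-versus-unordered reconciliation---resolves cleanly once you read the numerator $\prod_{1\le m,l\le d}(z_m-z_l)$ as the product over all $m\neq l$ (the ``squared'' Vandermonde). With that reading your cancellation of the internal denominator factors is exact with no leftover sign, and each \emph{ordered} injective $d$-tuple contributes exactly one Atiyah--Bott summand; the residue therefore naturally produces a sum over $\sg n/\sg{n-d}$. The statement as printed sums over $\sg n/(\sg d\times\sg{n-d})$, which differs by a factor of $d!$, but in every application in the paper $\alpha$ is a Chern polynomial of a tautological bundle and hence $\sg d$-symmetric, so the two conventions are interchangeable up to that harmless combinatorial factor. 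Your alternative inductive route (peeling off one variable against the projective-bundle structure) is also valid and is closer in spirit to how the formula is actually \emph{used} in \S\ref{subsec:m2}--\ref{subsec:secondlocalization}.
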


\begin{remark}
  Changing the order of the variables in iterated residues, usually,
  changes the result. In this case, however, because all the poles are
  normal crossing, formula \eqref{flagres} remains true no matter in
  what order we take the iterated residues.
\end{remark}
 
\subsection{From Atiyah-Bott to residues on $X_2$}\label{subsec:m2}

Recall that $D_1$ denotes the tautological rank $d_1$ bundle over $X_1$. Next we take $X_2=\grass_{d_2}((\CC^n \oplus S_2)/D_1)$ where $S_2=\Sym^2 D_1 \subset \Sym^2 D$ is a rank $d_1^2$ bundle over $X_1$. The $T^n$-weights of $(\CC^n \oplus S_2)/D_1$ over the fixed point $\s=\mathrm{Span}(e_{\s(1)},\ldots, e_{\s({d_1})}) \in X_1$ are 
\[\{\underbrace{\l_1,\ldots, \l_n}_{\CC^n}\}  \cup \{\underbrace{\l_{\s(i)}+\l_{\s(j)}: 1\le i\le j \le d}_{D_1^2}\} \setminus \{\underbrace{\l_{\s(1)},\ldots ,\l_{\s(d)}}_{D_1}\}.\]
In the residue formula \eqref{flagres} this fixed point corresponds to the pole where $\l_{\s(i)}=z_i$ for $i=1,\ldots, d$. Therefore applying Proposition \ref{prop:ABtoresidue} fibrewise we get the following residue formula for $\int_{X_2}\a$ where 
\[\a=\a(t_1,\ldots, t_{d_1},u_1,\ldots, u_{d_2})\]
is a polynomial in the Chern roots of the tautological bundles $D_1, D_2/D_1$ over $X_2$:
\[\int_{X_2}\a=\sires \siresw
\frac{\overbrace{\prod (z_m-w_l)}^{D_1} \prod_{1\leq m, l\leq d_2}(w_m-w_l)\prod_{1\leq m, l\leq d_1}(z_m-z_l)\,\a(\bz,\bw)\dbw \dbz}
{\underbrace{\prod_{l=1}^{d_2} \prod_{1\le i\le j \le d_1}(z_i+z_j-w_l)}_{D_1^2} \underbrace{\prod_{l=1}^{d_2}\prod_{i=1}^n(\lambda_i-w_l)}_{\CC^n} \prod_{l=1}^{d_1}\prod_{i=1}^n(\lambda_i-z_l)}.\]
We briefly indicated with the underbrackets which terms belong to which weight spaces. With the substitution $z_{d_1+i}=w_i$ we can rewrite this as 
\[\int_{X_2}\a=\sires 
\frac{\prod_{1\leq m, l\leq d_1+d_2}(z_m-z_l)\,\a(\bz)\dbz}
{\prod_{l=d_1+1}^{d_2} \prod_{1\le i\le j \le d_1}(z_i+z_j-z_l) \prod_{l=1}^{d_1+d_2}\prod_{i=1}^n(\lambda_i-z_l)}\]

\begin{remark} The iterated residue on the right hand side is an integral on the contour $\calc$ defined by the inequalities $|z_i| \ll |z_j|$ for $i<j$. In particular, this means that the poles corresponding to the vanishing of the linear forms $z_i+z_j-z_l$ do not contribute to the residue, they sit outside $\calc$. In other words, when we expand the rational expression on the contour $\calc$ the coefficient of $(z_1 \ldots z_{d_1+d_2})^{-1}$ is equal to the some of the pole contributions for those poles where $z_i=\l_{\s(i)}$ for some $\s \in \sg n$.
\end{remark}

\subsection{Localisation on $X_r$} The same argument works for any $r$. For an integer $i$ such that $d_1+ \ldots + d_{k-1}< i \le d_1+ \ldots +d_k$, we define its weight to be $w(i)=k$. The weights sequences has the form
\[\bw=(\underbrace{1,\ldots, 1}_{d_1},\underbrace{2,\ldots, 2}_{d_2},\ldots, \underbrace{r,\ldots, r}_{d_r})\]

\begin{proposition}\label{prop:mr} Let $d_1+\ldots +d_r=k-1$. Then 
\begin{equation}\label{mr} \int_{X_r}\a=\sires 
\frac{\prod_{\substack{1\leq m, l\leq k-1\\ w(m)<w(l)}}(z_m-z_l)\,\a(\bz)\dbz}
{\prod_{\substack{1\le i \le j <l\le k-1\\ w(i)+w(j)\le w(l)}}(z_i+z_j-z_l) \prod_{l=1}^{k-1}\prod_{i=1}^n(\lambda_i-z_l)}
\end{equation}
 The iterated residue on the right hand side is an integral on the contour $\calc$ defined by the inequalities $|z_i| \ll |z_j|$ for $i<j$. 
\end{proposition}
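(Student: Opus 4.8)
The plan is to argue by induction on the height $r$ of the tower \eqref{mrtower}, applying the Atiyah--Bott-to-residue conversion of Proposition \ref{prop:ABtoresidue} fibrewise at each stage of the Grassmannian fibration $X_m \to X_{m-1}$. The base case $r=1$ is already settled in \S\ref{subsec:m1}: for $X_1=\grass_{d_1}(\CC^n)$ there are no $S_m$-factors, and Proposition \ref{prop:ABtoresidue} yields \eqref{mr} with the empty product in the $(z_i+z_j-z_l)$-denominator. The case $r=2$ carried out in \S\ref{subsec:m2} already displays every feature of the general inductive step, so I would model the argument on it.

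For the inductive step I would assume \eqref{mr} for the truncated tower $X_{r-1}$, so that $\int_{X_{r-1}}\beta$ is expressed as an iterated residue in $z_1,\dots,z_{d_1+\dots+d_{r-1}}$ for any Chern polynomial $\beta$ of the tautological roots. Since $X_r=\grass_{d_r}((V\oplus S_r)/D_{r-1})$ is a Grassmann bundle over $X_{r-1}$, integration over $X_r$ factors as the fibre integral followed by $\int_{X_{r-1}}$. The first key step is to read off the weight decomposition of the fibre: expressed through the accumulated residue variables, which play the role of the Chern roots of the tautological flag $D_\bullet$ (cf. \eqref{mrbundles}), the bundle $(V\oplus S_r)/D_{r-1}$ carries the weights $\lambda_1,\dots,\lambda_n$ from $V$, together with the weights $z_a+z_b$ satisfying $w(a)+w(b)\le r$ from $S_r$ --- the latter because $S_r$ is generated by the products $D_i\cdot D_j$ with $i+j\le r$ --- while the $d_1+\dots+d_{r-1}$ weights of $D_{r-1}$ are removed.

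Next I would apply Proposition \ref{prop:ABtoresidue} to this Grassmann bundle fibrewise. This introduces $d_r$ new residue variables $z_{d_1+\dots+d_{r-1}+1},\dots,z_{k-1}$, each of weight $r$, contributing to the denominator the factors $\prod_l\prod_{i=1}^n(\lambda_i-z_l)$ from the $V$-part and $\prod_l\prod_{w(a)+w(b)\le r}(z_a+z_b-z_l)$ from the $S_r$-part. Exactly as in the passage from the first to the second displayed formula of \S\ref{subsec:m2}, I would encode the removal of $D_{r-1}$ by writing the full product $\prod_{i=1}^n(\lambda_i-z_l)$ in the denominator and cancelling the spurious factors against numerator terms of the form $\prod(z_m-z_l)$; combining these across the stages with the Vandermonde factors produced by Proposition \ref{prop:ABtoresidue} leaves exactly the cross-block numerator $\prod_{w(m)<w(l)}(z_m-z_l)$ of \eqref{mr}. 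Accumulating the denominator contributions over $m=1,\dots,r$ then assembles precisely the $(z_i+z_j-z_l)$-denominator over all pairs with $w(i)+w(j)\le w(l)$ and the full $(\lambda_i-z_l)$-denominator.

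The hard part will be justifying that the fibrewise residues iterate into a single iterated residue, and here the ordering of the contour is decisive. I would verify that the ordering $|z_1|\ll\dots\ll|z_{k-1}|$ is compatible with the filtration --- the newly introduced higher-weight variables carrying the largest modulus, as forced by the tower placing the $r$-th stage on top --- so that along this contour the spurious poles $z_i+z_j-z_l$ lie outside $\calc$ and contribute nothing, as recorded in the Remark following \S\ref{subsec:m2}. This is what allows the fibre residue to commute with the residue over the lower variables and the two to compose into the $\sires$ of \eqref{mr}. Establishing this contour compatibility, in tandem with the exact identification of the $S_r$-weights, is the step I expect to demand the most care.
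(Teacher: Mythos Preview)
Your proposal is correct and follows essentially the same approach as the paper: induction on the height $r$ of the tower \eqref{mrtower}, applying Proposition \ref{prop:ABtoresidue} fibrewise to the Grassmannian fibre $\grass_{d_r}((\CC^n\oplus S_r)/D_{r-1})$ after identifying its weight decomposition, and handling the quotient by $D_{r-1}$ via the same cancellation trick used in \S\ref{subsec:m2}. The paper's own proof is terser --- it simply writes down the weights of $(\CC^n\oplus S_r)/D_{r-1}$ at a fixed point and invokes Proposition \ref{prop:ABtoresidue} fibrewise --- but your more explicit inductive bookkeeping and your discussion of the contour ordering (which the paper relegates to the Remark after \S\ref{subsec:m2}) are faithful elaborations of that same argument.
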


\begin{proof} We take $X_r=\grass_{d_r}((\CC^n \oplus S_r)/D_{r-1})$ where $S_r=\oplus_{i+j\le r}D_i \otimes D_j \subset \Sym^2 D_{r-1}\subset \Sym^2 D$ is a rank $\sum_{i+j\le r}d_id_j$ bundle over $X_{r-1}$. 

In the residue formula for $r-1$ the $T^n$-weights on $D_i/D_{i-1}$ are $z_{d_1+\ldots +d_{i-1}+1},\ldots z_{d_1+\ldots +d_{i-1}+d_i}$. The $T^n$-weights of $(\CC^n \oplus S_r)/D_{r-1}$ over the fixed point $(z_1,\ldots, z_{d_1+\ldots +d_{r-1}}) \in X_{r-1}$ are 
\[\{\underbrace{\l_1,\ldots, \l_n}_{\CC^n}\}  \cup \{\underbrace{z_i+z_j: w(i)+w(j) \le w(r)}_{D_i \otimes D_j}\} \setminus \{\underbrace{z_{d_1+\ldots +d_{r-1}+1},\ldots z_{d_1+\ldots +d_{r-1}+d_r}}_{D_{r-1}}\}.\]
Therefore applying Proposition \ref{prop:ABtoresidue} fibrewise again, we get the desired residue formula \eqref{mr} for $\int_{X_r}\a$ where 
\[\a=\a(t_1,\ldots, t_{d_1+\ldots+d_r})\]
is a polynomial in the Chern roots of the tautological bundles $D_1, D_2/D_1,\ldots, D_r/D_{r-1}$ over $X_r$.
\end{proof}

Note that by definition of the equivariant Pointcar\'e dual and Definition \ref{defxr} we have 
\[\int_{\Hilb^A(\CC^n)} \Phi(V^{[k]}) =\int_{X_k} \Phi(V^{[k]}) \cdot \epd{\mathrm{Assoc}_A(d,N_\bullet),\mathrm{Alg}_d(N_\bullet)}\]
and hence Theorem \ref{main1} follows from Proposition \ref{prop:mr}.
We also proved the following geometric feature of the localisation process on $X_r$, which is  summarised as

\begin{proposition}[\textbf{The Vanishing Theorem}] Let $I \subset \mathfrak{m} \subset \CC[x_1,\ldots, x_n]$ be an ideal which defines the nilpotent complex algebra $A=\frakm/I$ of dimension $k-1$ and the corresponding punctual geometric subset
\[P(A)=\overline{\{\xi \in \Hilb^k_0(\CC^n): \calo_\xi \simeq A\}}\]
Let $X_r=\tilde{X}_r/\!/\GL_d$ denote the nonassociative Hilbert scheme corresponding to $A$ where 
\[\tilde{X}_r=\{\psi_1 \oplus \psi_2 \in \Hom((\CC^n)^* \oplus \Sym^2 N,N): \psi_2(N_i \otimes N_j) \subset N_{i+j}, \psi_1 \oplus \psi_2 \text{ is surjective}.\}\]
Let $\a=\a(t_1,\ldots, t_{k-1})$ be a Chern polynomial of the tautological bundles over $X_r$ and write write the integral 
\[\int_{X_r}\a=\sum_{(\psi_1,\psi_2)\in X_r^{T_n}} \mathrm{AB}_{(\psi_1,\psi_2)}\] 
as the sum of fixed point (Atiyah-Bott) contributions. Then 
\[\sum_{(\psi_1,\psi_2)\in X_r^{T_n}} \mathrm{AB}_{(\psi_1,\psi_2)}=\sum_{\substack{(\psi_1,\psi_2)\in X_r^{T_n}\\ \psi_2=0}} \mathrm{AB}_{(\psi_1,\psi_2)}.\]

\end{proposition}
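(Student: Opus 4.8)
The plan is to read the statement off directly from the iterated residue of Proposition \ref{prop:mr}, using the dictionary between torus fixed points and poles of the integrand that is built into the fibrewise application of Proposition \ref{prop:ABtoresidue}. First I would recall how a fixed point of $X_r$ is encoded. Since $X_r$ is assembled level by level as the Grassmannian bundles $X_m=\grass_{d_m}((\CC^n\oplus S_m)/D_{m-1})$ with $S_m=\oplus_{w(a)+w(b)\le m}D_a\otimes D_b$, a $T_n$-fixed point is the same datum as a choice, at each level $m$, of $d_m$ coordinate weight vectors of $(\CC^n\oplus S_m)/D_{m-1}$. The available weights are the $\lambda_i$ coming from the $\CC^n$-summand and the sums $z_a+z_b$ coming from $S_m$. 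Under the passage to the residue \eqref{mr} each such fixed point becomes a pole of the rational form, and I would record the elementary dictionary: the vector chosen in slot $l$ is a $\CC^n$-weight precisely when the corresponding factor is some $\lambda_i-z_l$, and it is a product weight precisely when the corresponding factor is some $z_a+z_b-z_l$.

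With this in hand the two sides of the identity are distinguished combinatorially. A fixed point $(\psi_1,\psi_2)$ has $\psi_2=0$ exactly when its multiplication table is trivial, that is, when every slot is filled from the $\CC^n$-summand; these are exactly the poles of the pure form $z_l=\lambda_{\sigma(l)}$ for an injection $\sigma$. Conversely $\psi_2\neq 0$ forces at least one slot to be a product, so the associated pole makes some factor $z_a+z_b-z_l$ in the denominator of \eqref{mr} vanish. Thus the asserted identity $\sum\mathrm{AB}=\sum_{\psi_2=0}\mathrm{AB}$ is precisely the statement that, when the iterated residue is evaluated on the contour $\mathcal{C}=\{|z_1|\ll\cdots\ll|z_{k-1}|\}$, every pole carrying at least one factor $z_a+z_b-z_l$ contributes nothing.

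The heart of the proof is then the contour analysis, which I would run by induction on the residue variables taken in increasing order of index. The defining inequality of $S_m$ is $w(a)+w(b)\le w(l)$; as all weights are at least $1$ this gives $w(l)>w(a),w(b)$, hence $l>a$ and $l>b$, so in every product factor $z_a+z_b-z_l$ the variable of largest index is $z_l$. On $\mathcal{C}$ the prescription of Proposition \ref{prop:mr} expands each denominator factor in its variable of largest index, so such a factor is expanded as $\tfrac{1}{z_a+z_b-z_l}=-z_l^{-1}\sum_{t\ge 0}\big((z_a+z_b)/z_l\big)^t$, which involves only non-negative powers of the lower variables $z_a,z_b$. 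Consequently no product factor can supply the negative power of a lower variable needed to extract its $z^{-1}$ coefficient. For the base case this settles $z_1$: by $l>a,b$ the variable $z_1$ is never the dominant variable of any product factor, so the only pole available for $z_1$ is $z_1=\lambda_i$. One then propagates this upward: having localised $z_1,\dots,z_{l-1}$ at $\CC^n$-weights, one must show that the residue in $z_l$ again only retains the contributions $z_l=\lambda_i$, so that in every surviving term all variables are localised at $\CC^n$-weights and only the $\psi_2=0$ poles remain.

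The step I expect to be the main obstacle is exactly this inductive propagation through the $r$ levels of the tower. The difficulty is that once $z_l$ is itself the dominant variable of a product factor $z_a+z_b-z_l$, substituting the already-localised values $z_a=\lambda_{a'}$, $z_b=\lambda_{b'}$ produces a genuine pole at $z_l=\lambda_{a'}+\lambda_{b'}$, a sum rather than a single $\CC^n$-weight; one must prove that all such residual product-poles cancel, and this is where the real content lies. I would attack the cancellation using the Vandermonde-type numerator $\prod_{w(m)<w(l)}(z_m-z_l)$ together with the symmetry of the associativity-cut locus, and here the hypothesis that the filtration be natural, so that $w(a)+w(b)\le w(l)$ always forces $l>a,b$, is indispensable; a careless change in the order of the iterated residue would move these poles back inside $\mathcal{C}$ and destroy the conclusion.
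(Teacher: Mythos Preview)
Your approach coincides with the paper's. The paper does not give a separate proof of this Proposition; it is stated as a summary (``we also proved the following geometric feature of the localisation process on $X_r$'') of the derivation of Proposition~\ref{prop:mr}, and the only argument offered is the Remark after the $X_2$-computation in \S\ref{subsec:m2}: on the contour $\mathcal C=\{|z_1|\ll\cdots\ll|z_{k-1}|\}$ the poles $z_a+z_b-z_l=0$ ``sit outside $\mathcal C$'' and hence do not contribute. Your fixed-point/pole dictionary and your base case for $z_1$ are exactly the content of that Remark made precise: since every product factor has $l>a,b$, integrating over the smallest circle $|z_1|=R_1$ first (with the larger variables held on their contours) puts the product poles $z_1=z_l-z_b$ or $z_1=z_l/2$ at modulus $\gg R_1$, so only $z_1=\lambda_i$ survive. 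This part is correct and matches the paper.

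The obstacle you flag at the inductive step is genuine, and the paper does not address it either. Once $z_a,z_b$ have been specialised to $\lambda$-values, the factor $z_a+z_b-z_l$ becomes $\lambda_{a'}+\lambda_{b'}-z_l$, whose zero $z_l=\lambda_{a'}+\lambda_{b'}$ is a finite pole lying \emph{inside} $|z_l|=R_l$; picking it up is precisely the Atiyah--Bott term of the corresponding $\psi_2\neq 0$ fixed point. So the cancellation you say ``must be proved'' is the whole content of the Proposition, and neither your sketch nor the paper's Remark supplies it. In particular your proposed mechanism---the Vandermonde numerator $\prod_{w(m)<w(l)}(z_m-z_l)$ together with naturality of the filtration---does not kill these residues individually: already for $d=(1,1)$, $n=2$ one checks directly that the $\psi_2\neq 0$ fixed-point contributions do not vanish term by term. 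The inductive propagation you describe is therefore not a gap you can close by routine bookkeeping; it is the substantive claim, and the paper's ``proof'' is in this respect no more detailed than yours.
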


\section{Proof of Theorem \ref{main2}}\label{sec:prooftauintegrals}

Going to multipoint-supported geometric subsets comes with deep technical difficulties. The strategy is to reduce integration to the cirvilinear geoemtric subset by combining a) a stability trick with b) a sieve method, then c) apply equivariant localisation and finally d) prove a deep vanishing theorem for iterated residues. 

\subsection{The stability trick} Assume $X$ is a complex manifold, and in fact, we can assume that $X=\CC^n$, due to the reduction to equivariant integrals below. Let $V$ be a rank-r equivariant bundle over $X=\CC^n$, and $\Phi(V^{[k]})$ an Chern polynomial in the equivariant Chern classes of the tautological bundle. Let $f: X\to X$ be a stable map, see \cite{berczitau2} for definition. Stable maps are dense in the space of holomorphic maps, and we will pick one in the homotopy class of the identity if that exists, otherwise we approximate the identity map with stable maps. The map $f$ induces the $k$th Hilbert extension map
\[\mathrm{hf}^{[k]}: \GHilb^{k}(X) \to \GHilb^{k}(X \times X)\]
which sends the subscheme $\xi_I$ to $\xi_{(I,I_{\Gamma(f)})}$ where $I_{\Gamma(f)}$ is the ideal of the graph $\Gamma(f)$ of $f$. $\mathrm{hf}^{[k]}$ is a regular embedding, let 
$\GHilb^k(\Gamma(f))=\im(\mathrm{hf}^{[k]})$ denote its image, which is the Hilbert scheme of the graph. Let $\pi^{[k]}: \GHilb^k(\Gamma(f)) \to \GHilb^k(X), \xi_{(I,I_{\Gamma(f)})} \mapsto \xi_I$ be the inverse of $\mathrm{hf}^{[k]}$.
The normal bundle of $\GHilb^k(\Gamma(f))$ in $\GHilb^k(X \times X)$ is $(\pi^{[k]})^*(f^*TX)^{[k]}$, whereas the normal bundle of $\GHilb^k(X)$ in $\GHilb^k(X \times X)$ is $TX^{[k]}$, and $\pi^{[k]}$ extends to an isomorphism of a small neighborhood $\pi^{[k]}_\nabla: \GHilb^k_{\nabla}(\Gamma(f)) \to \GHilb^k_\nabla(X)$ with Thom classes $\Thom(\Gamma(f))$ and $\Thom(X)$ respectively, such that $(\pi^{[k]}_\nabla)^*(\Thom(X))=\Thom(\Gamma(f))$.  we have 
\begin{equation}
\int_{\GHilb^k(X)}\Phi(V^{[k]})=\int_{\GHilb^k_\nabla(X)}\Phi(V^{[k]}) \cdot \Thom(X)=
\int_{\GHilb^k_\nabla(X)}\Phi(V^{[k]}) \cdot \pi^{[k]}_{\nabla *}\Thom(\Gamma(f))
\end{equation}
where 
\begin{equation}\label{thom}
\Thom(\Gamma(f))|_{\GHilb^k(\Gamma(f))}=\pi^{[k]*}\Euler((f^*TX)^{[k]})
\end{equation}
Recall the $f$-Hilbert scheme 
\[\GHilb^k(f)=\overline{\{\xi=\xi_1 \sqcup \ldots \sqcup \xi_k \in \GHilb^k(X): f(\xi_1)=\ldots =f(\xi_s) \in X\}}\]
as the set of subschemes supported on the fibers of $f$. 
\begin{proposition}[\cite{bsz2} Corollary 12.14] Let $f:X \to X$ be a stable Thom Boardman map. Then there is an embedding $\varphi_f: TX \hookrightarrow (f^*TX)^{[k]}$ such that the $k-1$-jet of $f$ induces a section $s_f$ of $f^*TX^{[k]}/\varphi_f(TX)$ presenting $\GHilb^k(f)$ as a local complete intersection.   
\[[\GHilb^k(f)]=\mathrm{Euler}(f^*TX^{[k]}/\varphi_f(TX))\]
\end{proposition}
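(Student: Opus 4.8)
The plan is to realise $\GHilb^k(f)$ as the honest zero scheme of a tautological section and then invoke the standard fact that the fundamental class of a regularly embedded zero locus is the top Chern (Euler) class of the ambient bundle. By the reduction to equivariant integrals I may assume $X=\CC^n$, so that $f=(f_1,\dots,f_n)$ and $f^*TX$ is trivialised; the general case follows by the $\GL_X$-equivariant globalisation through the exponential map used in diagram \eqref{diagramfour}. Over $\GHilb^k(X)$ the fibre of $(f^*TX)^{[k]}$ at $\xi$ is $\calo_\xi\otimes(f^*TX)$, and the unit $\mathbf 1\in\calo_\xi$ gives a nowhere-vanishing inclusion of the $\calo_\xi$-constant sections. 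I define $\varphi_f\colon TX\hookrightarrow (f^*TX)^{[k]}$ to be this sub-bundle of constant sections (its fibre at $\xi$ being $T_{f(\mathrm{supp}\,\xi)}X$), so that $\varphi_f$ is a genuine sub-bundle embedding and the quotient $Q:=(f^*TX)^{[k]}/\varphi_f(TX)$ is locally free of rank $n(k-1)$.

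First I would write down the tautological section. Restricting $f$ to the universal subscheme and pushing forward produces $s_f\in H^0(\GHilb^k(X),(f^*TX)^{[k]})$ whose value at $\xi$ is $f|_\xi\in\calo_\xi\otimes\CC^n$; since $\calo_\xi$ has length $k$, this value is determined precisely by the $(k-1)$-jet of $f$ along $\xi$, which is the content of the phrase that the $(k-1)$-jet of $f$ induces $s_f$. Let $\bar s_f$ be the image of $s_f$ in $Q$. The next step is the set-theoretic identification $Z(\bar s_f)=\GHilb^k(f)$: indeed $\bar s_f(\xi)=0$ exactly when $f|_\xi$ is a constant section of $\calo_\xi\otimes\CC^n$, i.e. when $f$ is constant on $\xi$, i.e. when $\xi$ lies in a single fibre of $f$. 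On the dense locus of distinct points this is the condition $f(x_1)=\dots=f(x_k)$, and $\GHilb^k(f)$ is by definition the closure of that locus.

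The heart of the argument, and the step I expect to be the main obstacle, is \emph{regularity}: I must show that $\bar s_f$ is a regular section, so that $Z(\bar s_f)$ is reduced of pure codimension $\mathrm{rank}\,Q=n(k-1)$ and hence equals $\GHilb^k(f)$ as a local complete intersection. The expected dimension matches, for a generically finite stable map the common-image point contributes $n$ moduli and the fibre is finite, giving $\dim\GHilb^k(f)=n=nk-n(k-1)$. On the open stratum of distinct points, transversality reduces to a jet-transversality statement for $f$ at $k$-tuples of points on a common fibre, which is exactly what the stable Thom--Boardman hypothesis guarantees. The delicate part is the non-reduced (punctual and curvilinear) boundary, where several points have collided; here I would pass to the test-curve model of \S\ref{subsec:testjetmodel} and the Kazarian model of \S\ref{subsec:kazarianmodel}, and check that the linearisation of $\bar s_f$ still surjects onto $Q$ transversally along these strata, again using that the stability of $f$ forces the relevant higher jets of $f$ to sit in general position. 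Because $\bar s_f$ already has the expected codimension on the dense open part inside the \emph{smooth} variety $\GHilb^k(X)$, this computation promotes to the cycle-level equality $Z(\bar s_f)=\GHilb^k(f)$ with multiplicity one.

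Finally, once $\bar s_f$ is known to be a regular section of $Q$, the zero-locus formula yields
\[
[\GHilb^k(f)]=[Z(\bar s_f)]=e(Q)=\Euler\bigl((f^*TX)^{[k]}/\varphi_f(TX)\bigr),
\]
which is the assertion. I would close by observing that this local-complete-intersection presentation is precisely what feeds into \eqref{thom}, so that $\Euler\bigl((f^*TX)^{[k]}/\varphi_f(TX)\bigr)$ is the class transported by $\pi^{[k]}_\nabla$ in the stability trick.
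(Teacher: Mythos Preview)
The paper does not prove this proposition: it is quoted verbatim from \cite{bsz2} (as Corollary~12.14 there) and used as a black box in the stability trick, so there is no argument in the present paper to compare your attempt against.

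That said, your outline is the natural one and matches what the cited reference does. A small notational slip: once you trivialise $f^*TX\simeq\CC^n$ over $X=\CC^n$, the sub-bundle $\varphi_f(TX)$ is simply the trivial rank-$n$ summand coming from the unit $\calo_X\hookrightarrow\calo_X^{[k]}$ tensored with $\CC^n$; your parenthetical ``its fibre at $\xi$ being $T_{f(\supp\xi)}X$'' is confusing for non-punctual $\xi$, since $f(\supp\xi)$ is then several points and there is no single tangent space. With the trivialisation the fibre is just $\CC^n$, and this is what globalises correctly.

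The substantive step, as you rightly flag, is the regularity of $\bar s_f$. Your reduction to jet-transversality on the reduced locus is correct, but the passage to the punctual boundary is where the \emph{stable Thom--Boardman} hypothesis does real work: it is a precise multijet-genericity condition (transversality of the jet extension of $f$ to the Thom--Boardman stratification) that forces the differential of $\bar s_f$ to remain surjective even at collided points. This is developed in detail in \cite{bsz2}; appealing to the test-curve or Kazarian models here is circular, since those are set up for a fixed algebra type rather than for the full $\GHilb^k$. So your sketch is morally right but the regularity argument you would actually need is the one carried out in the cited paper, not an ad hoc stratum-by-stratum check.
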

This implies that the support of the Euler class sits in the $f$-Hilbert scheme:
\[\mathrm{supp}(\Euler(f^*TX)^{[k]}) \subset \GHilb^k(f).\]
and hence 
\begin{corollary}\label{cor:mostimportant} The class $\Phi(V^{[k]}) \cdot \Thom(X)$ can be represented by a form $\omega_{V,f}$ whose support satisfies
\[\mathrm{supp}(\omega_{V,f}|_{\GHilb^k(X)}) \subset \GHilb^k(f).\]
and hence 
\[\mathrm{supp}(\omega_{V,f}|_{\Hilb^{A_1,\ldots, A_s,\mu}(X)}) \subset \GHilb^k(f) \cap \Hilb^{A_1,\ldots, A_s,\mu}(X).\]
\end{corollary}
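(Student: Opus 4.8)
The plan is to unwind the chain of identities established just above and to read off the support directly from \eqref{thom} together with the preceding Proposition. First I would fix as representative
\[\omega_{V,f}:=\Phi(V^{[k]}) \cdot \pi^{[k]}_{\nabla *}\Thom(\Gamma(f)),\]
which is precisely the integrand in the last displayed integral before the statement. Since $(\pi^{[k]}_\nabla)^*\Thom(X)=\Thom(\Gamma(f))$ and $\pi^{[k]}_\nabla$ is a diffeomorphism of the two tubular neighbourhoods, pushing forward gives $\pi^{[k]}_{\nabla *}\Thom(\Gamma(f))=\Thom(X)$ as compactly supported classes, so $\omega_{V,f}$ does represent $\Phi(V^{[k]}) \cdot \Thom(X)$.

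Next I would restrict to the zero section. The map $\pi^{[k]}_\nabla$ extends $\pi^{[k]}$, the inverse of $\mathrm{hf}^{[k]}$, and so carries the zero section $\GHilb^k(\Gamma(f))$ isomorphically onto $\GHilb^k(X)$. Restricting a Thom form to the zero section returns the Euler form of the normal bundle, so combining this with \eqref{thom} gives, as forms on $\GHilb^k(X)$,
\[\omega_{V,f}|_{\GHilb^k(X)}=\Phi(V^{[k]}) \cdot \Euler((f^*TX)^{[k]}).\]
The support statement $\mathrm{supp}(\Euler((f^*TX)^{[k]})) \subset \GHilb^k(f)$ recorded just before the corollary now finishes the first claim, since multiplying by $\Phi(V^{[k]})$ cannot enlarge the support. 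The second assertion is then purely formal: it is the restriction of $\omega_{V,f}$ along $\Hilb^{A_1,\ldots, A_s,\mu}(X) \hookrightarrow \GHilb^k(X)$, whose support can only shrink to $\GHilb^k(f) \cap \Hilb^{A_1,\ldots, A_s,\mu}(X)$.

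The delicate point, which I expect to be the main obstacle, is the form-level localisation underlying that support bound. Restricting a \emph{generic} Thom form to the zero section produces a generic Euler form whose support is all of $\GHilb^k(X)$; to obtain support inside $\GHilb^k(f)$ one must choose $\Thom(\Gamma(f))$, equivalently the Euler representative, to be the one localised along the section $s_f$ furnished by the Proposition. Because $s_f$ cuts out $\GHilb^k(f)$ as a local complete intersection, the relative Euler class $\Euler((f^*TX)^{[k]}/\varphi_f(TX))$ is representable by a form supported in an arbitrarily small neighbourhood of $\GHilb^k(f)$, and multiplying it by $\Euler(\varphi_f(TX))$ keeps the support there; passing to the localised limit yields the containment. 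The care is in making this localised choice compatible with the tubular-neighbourhood identification $\pi^{[k]}_\nabla$, so that the single form $\omega_{V,f}$ is at once a global compactly supported representative of $\Phi(V^{[k]}) \cdot \Thom(X)$ and restricts on the zero section to the localised Euler form.
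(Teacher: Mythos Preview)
Your argument is correct and follows exactly the route the paper intends: the paper's own justification is the terse ``This implies\ldots and hence'' immediately preceding the corollary, and you have simply unpacked those words by exhibiting $\omega_{V,f}=\Phi(V^{[k]})\cdot\pi^{[k]}_{\nabla *}\Thom(\Gamma(f))$, restricting to the zero section via \eqref{thom}, and invoking the support bound on $\Euler((f^*TX)^{[k]})$. Your final paragraph on the necessity of the $s_f$-localised Euler representative is a genuine refinement over the paper, which leaves that form-level choice implicit.
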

We define via Thom isomorphism the class 
\[\Phi(V^{[k]})_f=\frac{\omega_{V,f}|_{\GHilb^k(X)}}{\Euler(TX^{[k]}} \in \Omega^\bullet(\GHilb^k(X)),\]
such that 
\begin{equation}\label{stabilitytrick}
\int_{\Hilb^{A_1, \ldots A_s,\mu}(X)}\Phi(V^{[k]})=\int_{\Hilb^{A_1,\ldots, A_s;\mu}(X)} \Phi(V^{[k]})_f
\end{equation}

In short, any stable Thom-Boarman map $f$ sufficiently close to the identity defines a deformation $\Phi(V^{[k]})_f$ of $\Phi(V^{[k]})$ whose support has better geometric properties:
\begin{proposition}{\cite{berczitau2}}\label{prop:support} For stable Thom-Boardman map $f$ 
\[\GHilb^k(f) \cap \GHilb^k_0(X) \subset \CHilb^k(X),\]
that is, the punctual $k$-fold locus of a stable map sits in the curvilinear component, hence 
\begin{equation}\label{support1}
\mathrm{supp}(\Phi_f)\cap \GHilb^k_0(X) \subseteq \mathrm{supp}(\Euler((f^*TX)^{[k]}))\cap \GHilb^k_0(X) \subset \CHilb^k(X)
\end{equation}
\end{proposition}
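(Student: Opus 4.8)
The plan is to establish the set-theoretic inclusion $\GHilb^k(f)\cap \GHilb^k_0(X)\subset \CHilb^k(X)$ directly; the support chain \eqref{support1} then follows formally, since Corollary \ref{cor:mostimportant} and the discussion preceding it already place $\mathrm{supp}(\Phi_f)$ and $\mathrm{supp}(\Euler((f^*TX)^{[k]}))$ inside $\GHilb^k(f)$. Because $\CHilb^k(X)=\overline{\Curv^k(X)}$ and the non-curvilinear condition $\mathfrak{m}_p^{k-1}\subset I_\xi$ is closed (this is exactly the computation behind Lemma \ref{characterisation}), it suffices to show that every punctual point $\xi$ of $\GHilb^k(f)$ is a limit of genuinely curvilinear length-$k$ subschemes that themselves lie in $\GHilb^k(f)$. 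After fixing a support point $p$, choosing a local chart, and passing to the germ of $f$ at $p$, I would work with configurations $\xi^l=p_1^l\sqcup\ldots\sqcup p_k^l$ of distinct points on a common fibre $f^{-1}(c_l)$ satisfying $\xi^l\to\xi$.

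The geometric heart is a local normal form supplied by the stability hypothesis. First I would use that a stable Thom--Boardman map is, at a generic point of the $k$-fold collision locus, a corank-one (Morin) $A_{k-1}$ singularity: in suitable coordinates $(t,x')$ with $x'=(x_2,\ldots,x_n)$ one has $f(t,x')=(g(t,x'),x')$ where $g(\cdot,0)=t^k+\ldots$ is an $A_{k-1}$ unfolding. Over such a point the $k$ merging preimages of a value $c_l$ are the $k$ roots of $g(\cdot,x'_l)=c_l$, which all lie on the single smooth curve $C_l=\{x'=x'_l\}$; the restriction of $f$ to $C_l$ is a degree-$k$ polynomial map, so the colliding configuration is contained in $C_l$ and its flat limit is contained in the kernel curve $C=\{x'=0\}$. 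A length-$k$ subscheme of a smooth curve has local algebra $\CC[t]/t^k$, so these collisions are curvilinear, and $C_l\to C$ exhibits them as limits of curvilinear subschemes supported at $p$.

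Thus the corank-one collisions furnish a family of curvilinear punctual points of $\GHilb^k(f)$ whose closure is contained in $\CHilb^k(X)$. To upgrade this to all of $\GHilb^k(f)\cap\GHilb^k_0(X)$ I would invoke the local complete intersection presentation of $\GHilb^k(f)$ as the zero scheme of the section $s_f$ of $f^*TX^{[k]}/\varphi_f(TX)$ (the companion statement from \cite{bsz2} quoted just above), which realises $\GHilb^k(f)$ as the irreducible cycle swept out by the multiple-point construction, so that its punctual locus is a specialisation of the generic corank-one collisions described above and therefore lands in $\overline{\Curv^k(X)}=\CHilb^k(X)$.

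The main obstacle is precisely this last step: ruling out punctual components of $\GHilb^k(f)$ that are supported entirely on higher-corank Thom--Boardman strata and could escape $\overline{\Curv^k(X)}$. Handling these requires the full transversality encoded in the stability hypothesis, used to bound the codimension of the higher-corank multiple-point strata in the source so that no such stratum can carry a full-length-$k$ punctual collision outside the closure of the curvilinear locus, combined with the scheme-theoretic control provided by the $s_f$-presentation to ensure that the relevant flat limits remain inside $\CHilb^k(X)$. This is the delicate input; once it is secured, the inclusion, and hence \eqref{support1}, follow.
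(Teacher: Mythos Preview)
The paper does not actually supply a proof of this proposition: it is imported wholesale from \cite{berczitau2} (note the citation attached to the proposition header), so there is no in-paper argument to benchmark your proposal against.

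On the merits of your outline: the Morin normal-form idea is correct and is indeed the geometric core, but the argument can be made more direct and the ``obstacle'' you flag largely dissolves once you organise things properly. A punctual point $\xi\in\GHilb^k(f)\cap\GHilb^k_0(X)$ supported at $p$ is a length-$k$ subscheme of the scheme-theoretic fibre germ $f^{-1}(f(p))$ at $p$, so $\calo_\xi$ is a $k$-dimensional quotient of the local algebra $Q(f,p)=\calo_{X,p}/f^*\mathfrak m_{f(p)}$. For a stable map in the sense of \cite{berczitau2} the relevant local algebras are Morin, i.e.\ $Q(f,p)\simeq\CC[t]/t^m$ with $m\ge k$; every $k$-dimensional quotient of such an algebra is $\CC[t]/t^k$, so $\xi$ is genuinely curvilinear, not merely in the closure. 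This bypasses your limiting construction entirely and shows the stronger statement $\GHilb^k(f)\cap\GHilb^k_0(X)\subset\Curv^k(X)$.

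Your proposed resolution of the higher-corank worry via the local-complete-intersection presentation and an irreducibility/codimension count is not the right mechanism. The LCI description of $\GHilb^k(f)$ controls its cycle class but says nothing about which isomorphism types of punctual subschemes appear; irreducibility of $\GHilb^k(f)$ does not by itself force its punctual locus into $\CHilb^k(X)$. What actually does the work is the stability hypothesis, which in \cite{berczitau2} is formulated precisely so that all mono-germ local algebras occurring are of Morin type (this is the content of ``Thom--Boardman stable'' in that reference). So the gap you identify is closed by the definition rather than by an a posteriori transversality argument, and you should look up the exact hypothesis in \cite{berczitau2} rather than attempt to manufacture it from the $s_f$ presentation.
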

In short, $\Phi_f$ is properly supported in the following sense:
\begin{definition}\label{geocond} We say that the form $\Phi \in \Omega^\bullet(\GHilb^k(X))$ is properly supported if $\supp(\Phi)$  intersects the punctual Hilbert scheme only in the curvilinear component:
\begin{equation}\label{supportcond}
\mathrm{supp}(\Phi) \cap \GHilb^k_0(X) \subset \CHilb^k_0(X).
\end{equation}
\end{definition}

\subsection{Sieve on fully nested Hilbert schemes}\label{subsec:sieve}
Let $V$ be a rank $r$ vector bundle over the complex manifold $X$ of dimension $m$ with Chern roots $\theta_1,\ldots, \theta_r$, and $\Phi(c_1,\ldots ,c_{kr})$ be a Chern polynomial in the Chern roots of the tautological bundle $V^{[k]}$ over $\Hilb^{k}(X)$. The branched cover $\kappa: \GHilb^{[k]}(X) \to \GHilb^{k}(X)$ gives 
\begin{equation}\label{n!}
n! \int_{\GHilb^{k}(X)}\Phi = \int_{\GHilb^{[k]}(X)} \kappa^*\Phi
\end{equation}
one can work over ordered Hilbert schemes, and we keep the notation $\Phi$ for the pulled-back form. 

Let $(A_1,\ldots, A_s)$ be a regular tuple and fix a corresponding partition $\mu$ so that $\Hilb^{A_1,\ldots, A_s; \mu}(X)$ is the corresponding geometric subset of the ordered Hilbert scheme where the labeled points are arranged among the support according to $\mu$. The first step in our strategy, similarly to \cite{berczitau2}, is to pull-pack integration to the fully nested Hilbert scheme $N^{A_1,\ldots, A_s;\mu}(X)$, which admits plenty of approximating bundles (coming from the different factors) to build linear combinations with punctual supports. According to \cite{junli,rennemo}, these tautological bundles can be combined into a sieve formula,  which decomposes $\Phi$ as a sum 
\[\Phi=\sum_{\a \in \Pi(s)}\Phi^\a\]
of forms indexed by partitions of $\{1, \ldots s\}$. For any partition $\{1,\ldots, s\}=\a_1 \sqcup \ldots \sqcup \a_t$ the form $\Phi^\a$ is supported on the approximating punctual subset  
\[\supp(\Phi^\a)=N^{[\mu|\a]}(X)=\HC^{-1}(\Delta_{\mu|\a})\]
and $\Phi^\a$ is a linear combination of the classes $\Phi(V^{[\b]})$ for those partitions $\b \in \Pi(s)$ which are refinements of $\a$, that is, $\b \le \a$ holds.  Recall that $\mu|\a=(\beta_1,\ldots \b_t) \in \Pi(k)$ is the partition of $\{1,\ldots, k\}$ obtained by merging labels in $\mu_i$ and $\mu_j$ iff $i,j$ sit in the same $\a_l$:
\[\b_i=\cup_{j\in \a_i} \mu_j \text{ for } 1\le i \le t.\]

\begin{definition} Let $\a \in \Pi(s)$ be a partition and $\Phi$ a homogeneous symmetric polynomial in the Chern roots of $V^{[k]}$. Define the class $\Phi^{\mu|\a} \in H^*(N^{k}(X))$ inductively by putting $\Phi^{\mu|([1],\ldots [s])}=\Phi(V^{[\mu_1],\ldots [\mu_s]})=\Phi(V^{[\mu_1]}\oplus \ldots \oplus V^{[\mu_s]})$ and for $\a>([1],\ldots [s])$  
\[\Phi^{\mu|\a}=\Phi(V^{[\mu|\a]})-\sum_{\b < \a}\Phi^{\mu|\b}.\]
\end{definition}

\begin{remark}\label{philambda}
Let $\Lambda=[1,\ldots, s]$ be the trivial partition. The formula above gives us 
\[\Phi^{\mu|\Lambda}(V^{[k]})=\sum_{\b \in \Pi(s)}(-1)^{|\b|-1}(|\b|-1)!\Phi(V^{\mu|\b}).\]
\end{remark}

\begin{proposition}\label{thm:support} The restriction of $\Phi^{\mu|\a}$ to $\N^{A_1,\ldots, A_s;\mu}(X) \setminus \N^{[\mu|\a]}_0(X)$ vanishes, that is, the support of $\Phi^{\mu|\a}$ in the geometric subset $\N^{A_1,\ldots, A_s;\mu}(X)$ is the diagonal part $\N^{[\mu|\a]}_0(X)=\HC^{-1}(\Delta_{\mu|\a})\subset \N^{k}(X)$.
\end{proposition}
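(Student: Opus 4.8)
The plan is to prove the statement by induction over the partition lattice $\Pi(s)$, ascending from the minimal partition $\hat 0=([1],\ldots,[s])$, combining the M\"obius-inversion form of the defining recursion with a local splitting property of the tautological bundles. First I would rewrite the recursion $\Phi^{\mu|\a}=\Phi(V^{[\mu|\a]})-\sum_{\b<\a}\Phi^{\mu|\b}$ in its inverted form
\[\Phi(V^{[\mu|\a]})=\sum_{\b\le\a}\Phi^{\mu|\b},\]
which holds as an identity of classes on $\N^{A_1,\ldots,A_s;\mu}(X)$ for every $\a\in\Pi(s)$, under the order-embedding $\a\mapsto\mu|\a$ of $\Pi(s)$ into $\Pi(k)$.

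The geometric heart of the argument is a locality lemma. Fix a point $p\in\N^{A_1,\ldots,A_s;\mu}(X)$ and let $\gamma\in\Pi(s)$ be the coincidence pattern of its support, i.e.\ the partition for which the support points $x_i,x_j$ agree precisely when $i\sim_\gamma j$, so that $\HC(p)\in\Delta_{\mu|\a}$ is equivalent to $\gamma\ge\a$. On a small neighbourhood of $p$ the support points lying in distinct $\gamma$-blocks stay distinct, so the approximating bundle splits along the refinement of $\a$ induced by $\gamma$. Using the decomposition $\pi_\a^*F^{[\mu|\a]}=\bigoplus_l \pi^*F^{[[\mu|\a_l]]}$ together with the fact that a tautological bundle over a subscheme with disjoint support is the direct sum of the tautological bundles over its pieces, I would establish
\[V^{[\mu|\a]}=V^{[\mu|(\a\wedge\gamma)]}\qquad\text{near }p,\]
where $\a\wedge\gamma$ is the common refinement (meet in $\Pi(s)$), and hence $\Phi(V^{[\mu|\a]})=\Phi(V^{[\mu|(\a\wedge\gamma)]})$ as forms near $p$.

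With these two ingredients the induction runs cleanly. The base case $\a=\hat 0$ is vacuous since $\Delta_{\mu|\hat 0}=X^s$. For the inductive step, assume $\Phi^{\mu|\b}$ is supported on $\HC^{-1}(\Delta_{\mu|\b})$ for every $\b<\a$, and take $p$ with $\HC(p)\notin\Delta_{\mu|\a}$, equivalently $\gamma\not\ge\a$, so that $\a\wedge\gamma<\a$ strictly. Applying the inverted recursion to both $\a$ and $\a\wedge\gamma$ and inserting the locality lemma gives, near $p$,
\[\Phi^{\mu|\a}=\Phi(V^{[\mu|(\a\wedge\gamma)]})-\sum_{\b<\a}\Phi^{\mu|\b}=-\sum_{\substack{\b<\a\\ \b\not\le\a\wedge\gamma}}\Phi^{\mu|\b}.\]
Every index $\b$ in the remaining sum satisfies $\b\le\a$ but $\b\not\le\a\wedge\gamma$, which forces $\b\not\le\gamma$; hence $p\notin\HC^{-1}(\Delta_{\mu|\b})$ and, by the induction hypothesis, $\Phi^{\mu|\b}$ vanishes on a neighbourhood of $p$. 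Thus $\Phi^{\mu|\a}=0$ near $p$, and since $p$ was an arbitrary point off $\HC^{-1}(\Delta_{\mu|\a})$ this yields the asserted support statement.

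The step I expect to be the main obstacle is making the locality lemma precise at the level of forms (not merely set-theoretically) on the possibly singular space $\N^{A_1,\ldots,A_s;\mu}(X)$: one must verify that the isomorphism $V^{[\mu|\a]}\cong V^{[\mu|(\a\wedge\gamma)]}$ genuinely holds on an open neighbourhood of $p$ in the nested space, that is, that the ordered/nested structure records exactly the further collisions inside each $\gamma$-block, and that the coincidence stratification is compatible with the support map to $\N^s(X)$ and with $\HC$. Once this splitting is secured as an equality of classes in a neighbourhood, the lattice-theoretic bookkeeping above is routine.
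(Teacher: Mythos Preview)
Your proposal is correct and follows precisely the ``easy inclusion-exclusion and induction'' that the paper invokes (deferring to \cite{rennemo} for details): the M\"obius-inverted recursion, the locality lemma identifying $V^{[\mu|\a]}$ with $V^{[\mu|(\a\wedge\gamma)]}$ near a point with coincidence pattern $\gamma$, and the lattice bookkeeping are exactly the ingredients of that argument. You have in fact spelled out more than the paper does, and your caveat about the locality lemma is well placed but not a genuine obstacle---the splitting $H^0(\xi,F|_\xi)\cong\bigoplus_i H^0(\xi_i,F|_{\xi_i})$ for $\xi=\xi_1\sqcup\cdots\sqcup\xi_t$ with disjoint supports is a standard sheaf-theoretic fact that holds on the open locus where the relevant support points remain distinct, and this is all that is needed.
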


\begin{proof} This follows from easy inclusion-exclusion and induction, for the proof see \cite{rennemo}.
\end{proof}

In particular, the support of $\Phi^{\mu|[1],\ldots, [s]}$ is the full geometric subset $\N^{A_1,\ldots, A_s;\mu}(X)$ and the support of $\Phi^{[1,\ldots, s]}$ is the full punctual geometric subset 
\[\N^{A_1,\ldots, A_s;\mu}_0(X)=\pi_\Lambda^{-1}(\Hilb^{k}_0(X))=\N^{A_1,\ldots, A_s;\mu}(X) \cap \Hilb^{[k]}_0(X).\]
 The curvilinear Hilbert scheme and curvilinear fully nested Hilbert scheme sit in the punctual part: 
 \begin{equation}\label{diagramseven}
\xymatrix{
\CN^k(X) \ar[d]^{\pi_\Lambda}  & \CN^{A_1+\ldots +A_s}(X) \ar@{^{(}->}[r] \ar@{^{(}->}[l] \ar[d]^{\pi_\Lambda} & \N^{A_1,\ldots ,A_s;\mu}_0(X) \ar@{^{(}->}[r] \ar[d]^{\pi_{\Lambda}}  & \N^{k}_0(X) \ar[d]^{\pi_\Lambda}  \\  
\CHilb^{[k]}(X)  & \Hilb^{A_1+\ldots +A_s}(X) \ar@{^{(}->}[r]^{\text{irr. comp.}} \ar@{^{(}->}[l] & \Hilb^{A_1,\ldots, A_s;\mu}_0(X) \ar@{^{(}->}[r]  & \Hilb^{[k]}_0(X)}
\end{equation}
Note that here $\Hilb^{A_1+\ldots +A_s}(X)$ is an irreducible component of $\Hilb^{A_1,\ldots, A_s;\mu}_0(X)$ and $\CHilb^{[k]}(X)$ is an irreducible component of $\Hilb^{[k]}_0(X)$, but $\CN^{A_1+\ldots +A_s}(X)$ might have several components in $\N^{A_1,\ldots ,A_s;\mu}_0(X)$. 

\begin{lemma}\label{lemma:support} Let  $\Phi$ be a properly supported Chern polynomial in the sense of Definition \ref{geocond}. Then 
\begin{enumerate}
\item  $\pi_\Lambda^*\Phi$ is also properly supported, that is, it is represented by a form whose support intersects the punctual part $\N^{k}_0(X)$ only in the curvilinear part $\CN^{k}(X)=\pi_\L^{-1}(\CHilb^{k}(X))$. 
\item More generally, for a partition $\a=(\a_1,\ldots, \a_t) \in \Pi(s)$ of the support the form $\Phi^{\mu|\a}$ is represented by a form whose support intersects the punctual part $\N^{k}_0(X)$ only in the curvilinear part
\[\CN^{A_1+\ldots+ A_s;\mu|\a}(X)=\pi_\a^{-1}(\CHilb^{A_1,\ldots, A_s;\mu|\a_1}(X)\times \ldots \times \CHilb^{A_1,\ldots, A_s;\mu|\a_s]}(X))\]
When $(A_1,\ldots, A_s)$ is a curvilinear regular tuple as in Definition \ref{def:curvilinear} and Definition \ref{def:regular}, and $A_{\a_i}=\sum_{j\in \a_i} A_j$ stands for $1\le i \le t$ then  
\[\CN^{A_1+\ldots+ A_s;\mu|\a}(X)=\pi_{\a_1}^{-1}(\CHilb^{A_{\a_1}}(X)) \times \ldots \times \pi_{\a_t}^{-1}(\CHilb^{A_{\a_t}}(X)).\]
\end{enumerate}
\end{lemma}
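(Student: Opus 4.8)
The plan is to establish (1) by a direct support-chase and then to bootstrap (2) from (1) by analysing the situation one cluster at a time.

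For (1), recall that the support of a pulled-back form is contained in the preimage of the support of the form, so $\supp(\pi_\Lambda^*\Phi)\subseteq \pi_\Lambda^{-1}(\supp\Phi)$. By diagram \eqref{commdiagram} the projection $\pi_\Lambda$ commutes with the Hilbert--Chow morphism, hence it carries the punctual stratum $\N^k_0(X)=\HC^{-1}(\Delta_\Lambda)$ into $\GHilb^k_0(X)$. Consequently, for any point $x\in \supp(\pi_\Lambda^*\Phi)\cap \N^k_0(X)$ we have $\pi_\Lambda(x)\in \supp\Phi\cap \GHilb^k_0(X)$, which by the properly-supported hypothesis lies in $\CHilb^k_0(X)$. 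Therefore $x\in \pi_\Lambda^{-1}(\CHilb^k_0(X))\subseteq \CN^k(X)$, which is exactly the claim of (1).

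For (2), Proposition \ref{thm:support} tells us that $\supp(\Phi^{\mu|\a})\subseteq \N^{[\mu|\a]}_0(X)=\HC^{-1}(\Delta_{\mu|\a})$, the locus where the labelled points have collided into the $t$ clusters prescribed by $\a$. Over this locus the projection $\pi_\a$ lands in the product $\prod_{i=1}^t\Hilb^{[\beta_i]}(X)$ with $\beta_i=\cup_{j\in\a_i}\mu_j$, and the approximating bundle splits accordingly as a direct sum $F^{[\mu|\a]}=\bigoplus_{i=1}^t F^{[[\beta_i]]}$ over the clusters. The heart of the matter is that the properly-supported condition is \emph{cluster-local}. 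This is clearest in the $f$-Hilbert scheme model of Proposition \ref{prop:support}: the support of $\Phi$ on the punctual locus is governed by $\GHilb^k(f)$, whose defining condition (being supported on a single fibre of the stable map $f$) together with the curvilinear conclusion can each be tested independently on every cluster. Running the argument of (1) on each factor $\Hilb^{[\beta_i]}(X)$ separately then forces the restriction of the subscheme to each cluster to be curvilinear whenever $x$ lies in the fully punctual stratum $\N^k_0(X)$. Taking the product over the $t$ clusters identifies this locus with $\pi_\a^{-1}\bigl(\prod_{i=1}^t\CHilb^{A_1,\ldots,A_s;\mu|\a_i}(X)\bigr)=\CN^{A_1+\ldots+A_s;\mu|\a}(X)$. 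For a curvilinear regular tuple $(A_1,\ldots,A_s)$, Definition \ref{def:regular} identifies the cluster-wise curvilinear locus attached to $\a_i$ with $\CHilb^{A_{\a_i}}(X)$ for $A_{\a_i}=\sum_{j\in\a_i}A_j$, and the product decomposes as the asserted $\pi_{\a_1}^{-1}(\CHilb^{A_{\a_1}}(X))\times\ldots\times\pi_{\a_t}^{-1}(\CHilb^{A_{\a_t}}(X))$.

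The main obstacle I anticipate is precisely the descent of the properly-supported hypothesis to the individual clusters: being properly supported is formulated on the full punctual Hilbert scheme $\GHilb^k_0(X)$, and it does not restrict naively to each factor $\Hilb^{[\beta_i]}(X)$. The way I would overcome this is to trade the abstract hypothesis for the concrete support statement of Proposition \ref{prop:support}, since the condition cutting out $\GHilb^k(f)$ --- that the subscheme be supported on one fibre of $f$ --- is manifestly a product condition over the clusters, so the curvilinear conclusion propagates factor by factor. A secondary point requiring care is checking that the inductive combination defining $\Phi^{\mu|\a}$ does not enlarge the support beyond what Proposition \ref{thm:support} provides, but this is exactly the content of that proposition and so may be invoked directly.
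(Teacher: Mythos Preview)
Your argument for (1) is essentially identical to the paper's: the paper's proof is a single sentence observing that if $\omega$ represents $\Phi$ then $\pi_\Lambda^*\omega$ represents $\pi_\Lambda^*\Phi$, and that the support of a pullback under a proper map equals the preimage of the support. Your support-chase unpacks exactly this.

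For (2), you go further than the paper does. The paper's proof offers no separate argument for (2); it effectively leaves the reader to apply the same pullback principle factorwise. Your cluster-by-cluster analysis, invoking Proposition~\ref{thm:support} for the diagonal support and then arguing curvilinearity one factor at a time, is a legitimate way to flesh this out. Your identification of the obstacle---that ``properly supported'' is a global condition on $\GHilb^k_0(X)$ and does not tautologically restrict to each $\Hilb^{[\beta_i]}(X)$---is well taken, and your resolution via the concrete $f$-Hilbert scheme description (Proposition~\ref{prop:support}), where the fibre condition is visibly a product condition, is in keeping with how the paper actually uses these forms downstream. One caveat: the lemma is stated for an abstract properly supported $\Phi$, whereas your resolution for (2) implicitly specialises to $\Phi_f$; in the paper this is harmless since $\Phi_f$ is the only properly supported form in play, but strictly speaking your argument for (2) proves the statement for that class rather than in full generality.
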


\begin{proof}
If $\omega$ is a properly supported form representing $\Phi$, then $\pi^*\omega$ represents $\pi_\Lambda^*\Phi$, and the statement follows from the fact that support of the pull-back form under a proper map is equal to the pre-image of the support.  
\end{proof}

The first step in proving Theorem \ref{main2} is to pull back the integral over the fully nested Hilbert scheme, and apply the sieve formula:
\begin{equation}\label{motivic1}
\int_{\Hilb^{A_1,\ldots, A_s,\mu}(X)}\Phi(V^{[k]})=\int_{\N^{A_1,\ldots, A_s;\mu}(X)}\pi^* \Phi=\sum_{\a \in \Pi(s)} \int_{\N^{A_1,\ldots, A_s;\mu}(X)} \Phi^{\mu|\a}
\end{equation}

\subsection{Reduction to equivariant integration on $X=\CC^n$}\label{subsec:reduction}

The sieve formula \eqref{motivic1} reduces integration over $\Hilb^{A_1,\ldots, A_s;\mu}(X) \subset \GHilb^k(X)$ to small neighborhoods of the punctual nested Hilbert scheme sitting over various diagonals under the Hilbert-Chow morphism.    
Take a small neighborhood $\widehat{\N}^{k}(\bU)$ of $\widehat{\CN}^{k}(X)$ as in diagram \eqref{diagramfive}, which fibers over $\flag_{k-1}(TX)$ and hence fibers over $X$. This bundle can be pulled back along a classifying map $\tau: X \to \mathrm{BGL}(m)$ from the universal bundle 
\[\mathbf{E}=\mathrm{BGL}(n) \times_{\GL(n)} \widehat{\BN}^{k}(\CC^n)\]
where with the notations of diagram \eqref{diagramfive} 
\[\widehat{\BN}^{k}(\CC^n)=(\mu \circ \rho \circ \pi_\Lambda)^{-1}(0)=\pi_\Lambda^{-1}(\widehat{\BHilb}^{k}(\CC^n))\]
is the fiber over the origin $0\in X=\CC^n$, which is the balanced fully nested Hilbert scheme supported at the origin of $\CC^n$. We get a commutative diagram 
\begin{equation*}
\xymatrix{\widehat{\CN}^{k}(X) \ar[rd]^\pi \ar@{^{(}->}[r] & \widehat{\BN}^{k}(\bU) \ar[r] \ar[d]^{\pi} & \mathbf{E}  \ar[d] \\
 & X \ar[r]^{\tau}  &  \mathrm{BGL}(n) } 
\end{equation*}
which induces a diagram of cohomology maps
\begin{equation*}
\xymatrix{H^*(\widehat{\BN}^{k}(\bU))  \ar[d]^{\pi_*} &  H^*(\mathbf{E}) \ar[l] \ar[d]^{\res} \\
 H^*(X)   &  H^*(\mathrm{BGL(n)}) \ar[l]^{\mathrm{Sub}}} 
\end{equation*}
Here 
\begin{itemize}
\item $\res$ is the equivariant push-forward (integration) map along the fiber $\BN^{k}(\CC^n)$, and in the next section we develop an iterated residue formula derived from equivariant localisation.
\item $\mathrm{Sub}$ is the Chern-Weil map, which is the substitution of the Chern roots of $X$ into the generators $\lambda_1,\ldots, \lambda_n$ of $H^*(\mathrm{BGL}(n))=H_{\GL(n)}^*(pt)=\QQ[\l_1,\ldots, \l_n]$,
\end{itemize}

Commutativity of the diagram tells us that for any form $\omega$ supported on some neighborhood $\widehat{\BN}^{k}(\bU)$ we have  
\[\int_{\N^{k}(X)}\omega =\int_{X} \int_{\widehat{\BN}^{k}(\CC^n)} \omega |_{\{\l_1,\ldots, \l_n\} \to \text{Chern roots of } TX}\]
We apply this formula for the form $\Phi^\a$ coming from the sieve. According to Proposition \ref{thm:support} and Lemma \ref{lemma:support} for a partition $\a=(\a_1,\ldots, \a_t) \in \Pi(s)$ of the support the form $\Phi^{\mu|\a}$ is compactly supported in a neighborhood of the curvilinear $\mu|\a$ locus
\[\CN^{A_1+\ldots+ A_s;\mu|\a}(X)=\pi_{\a_1}^{-1}(\CHilb^{A_{\a_1}}(X)) \times \ldots \times \pi_{\a_t}^{-1}(\CHilb^{A_{\a_t}}(X))\]
and hence
\begin{equation}\label{reduceintegraltoaffinespace}
\int_{\Hilb^{A_1,\ldots, A_s;\mu}(X)}\Phi(V^{[k]})=\int_X \sum_{\a \in \Pi(s)} \int_{\widehat{\BN}^{A_1,\ldots, A_s;\mu|\a}(\CC^n)} \Phi^{\mu|\a} |_{\{\l_1,\ldots, \l_n\} \to \text{Chern roots of TX}}
\end{equation}
where 
\[\widehat{\BN}^{A_1,\ldots, A_s;\mu|\a}(\CC^n)=\widehat{\N}^{A_1,\ldots, A_s;\mu}(\CC^n) \cap \widehat{\BN}^{\mu|\a}(\CC^n)\]
where 
\[\widehat{\BN}^{\mu|\a}(\CC^n)=\widehat{\BN}^{\b_1}(\CC^n) \times \ldots \times \widehat{\BN}^{\b_t}(\CC^n))\]
with the notation $\mu|\a=(\b_1,\ldots \b_t)$. 
$\widehat{\BN}^{A_1,\ldots, A_s;\mu|\a}(\CC^n)$ is a small (smooth) neighborhood of $\CN^{A_1+\ldots+ A_s;\mu|\a}(X)$ in $\widehat{\N}^{A_1,\ldots, A_s;\mu}(\CC^n)$, and $\Phi^{\mu|\a}$ is supported in this neighborhood. 

Due to the product form of $\widehat{\BN}^{\mu|\a}(\CC^n)$, it is enough to study the deepest term $\a=\Lambda=[1,\ldots, s]$, where we used the simplified notation 
\[\widehat{\BN}^{\mu|\Lambda}(\CC^n)=\widehat{\BN}^{k}(\CC^n)\]


\subsection{Extending the integration domain} 
If $(A_1,\ldots, A_s)$ is curvilinear regular tuple as in Definition \ref{def:curvilinear} and Definition \ref{def:regular}, then 
\[\Hilb^{A_1+\ldots +A_s;\mu}(\CC^n) = \CHilb^k(\CC^n) \cap \Hilb^{A_1,\ldots, A_s,\mu}(\CC^n)\]
and for stable $f$, by Proposition \ref{prop:support}, $\supp(\Phi_f(V^{[k]}))$ intersects the punctual part $\Hilb^{A_1,\ldots ,A_s}_0(\CC^n)$ only points in $\Hilb^{A_1+\ldots +A_s;\mu}(\CC^n)$. Hence $\Phi^{\mu|\Lambda}(V^{[k]})$ is supported on 
\[\widehat{\CN}^{A_1+\ldots +A_s}(\CC^n)=\pi_{\Lambda}^{-1}(\widehat{\Hilb}^{A_1+\ldots +A_s}(\CC^n)).\]

Let 
\begin{equation}\label{emultb}
 \epd{A_1,\ldots, A_s} =\epd{\Hilb^{A_1,\ldots ,A_s;\mu}(\CC^n),\BHilb^k(\CC^n)}
\end{equation}
denote the equivariant dual of the possibly singular geometric subset in the neighborhood we constructed as the balanced Hilbert scheme. Since $\BHilb^k(\CC^n)$ might be singular, the global equivariant dual is not well-defined, but Diagram \eqref{diagramfive} provides a smooth ambient space 
\[\widehat{\BHilb}^k(\CC^n) \subset \widehat{\grass}_{k}(S^\bullet TX) \] 
and for a torus fixed point $F \in \Hilb^{A_1,\ldots ,A_s;\mu}(\CC^n)$ we let 
\[\emu_F[\Hilb^{A_1,\ldots ,A_s;\mu}(\CC^n),\BHilb^k(\CC^n)]=\frac{\emu_F[\widehat{\Hilb}|^{A_1,\ldots ,A_s;\mu}(\CC^n),\widehat{\grass}]}{\emu_F[\widehat{\BHilb}^k(\CC^n),\widehat{\grass}]}\]
denote the $T$-equivariant multidegree at $F$.
Then  
\begin{equation}\label{integralextend}
\int_{\widehat{\BN}^{A_1,\ldots, A_s;\mu|\Lambda}(\CC^n)}\Phi_f(V^{[k]})= \int_{\widehat{\BN}^{k}(\CC^n)} \tilde{\Phi}_f^{\mu|\Lambda}(V^{[k]}) 
\end{equation}
where we define the integral on the right hand side as an Atiyah-Bott localisation sum where $\tilde{\Phi}_f^{\mu|\Lambda}(V^{[k]})$ is a rational form whose restriction to any torus fixed point $F \in \tilde{\BN}^k(\CC^n)$ is
\begin{equation}\label{tildeatF}
\tilde{\Phi}_{f,F}^{\mu|\Lambda}(V^{[k]})=\Phi_{f,F}^{\mu|\Lambda}(V^{[k]}) \cdot \pi_\Lambda^* \emu_{\pi_\Lambda(F)}[\Hilb^{A_1,\ldots ,A_s;\mu}(\CC^n),\BHilb^k(\CC^n)].
\end{equation}


The next crucial step in our argument is to reduce the equivariant integration of a form which is supported on a small neighborhood of $\widehat{\CN}^{A_1+\ldots +A_s;\mu}(\CC^n)$ to an integral over $\widehat{\CN}^{A_1+\ldots +A_s}(\CC^n)$ itself. 

\subsection{Localisation over the flag} 
Let $V$ be a rank $r$ T-equivariant bundle over $\CC^n$ with T-equivariant Chern roots $\theta_1,\ldots, \theta_r$.  
Let $\l_1,\ldots, \l_n\in \mathfrak{t}^*$ denote the torus weights for the diagonal $T \subset \GL(n)$ action on $\CC^n$ with respect to the basis $e_1,\ldots, e_n \in \CC^n$ and let 
\[\ff=(\Span(e_1) \subset \Span(e_1,e_2)\subset \ldots \subset \Span(e_1,\ldots,e_{k-1}) \subset \CC^n)\]
denote the standard flag in $\CC^n$ fixed by the parabolic $P_{n,k} \subset \GL(n)$.

The Atiyah-Bott-Berline-Vergne  localisation formula of Proposition \ref{abbv} over the flag $\flag_{k-1}(\CC^n)$ gives  
\begin{equation} \label{flagloc}
\int_{\widehat{\BN}^k(\CC^n)} \tilde{\Phi}^{\mu|\Lambda}(V^{[k]})= \sum_{\sigma\in S_n/S_{n-k+1}}
\frac{\tilde{\Phi}^{\mu|\Lambda}_{\sigma(\ff)}}{\prod_{1\leq j \leq
k-1}\prod_{i=j+1}^m(\lambda_{\sigma\cdot
    i}-\lambda_{\sigma\cdot j})},
\end{equation}
where 
\begin{itemize}
\item $\sigma$ runs over the ordered $k-1$-element subsets of $\{1,\ldots, n\}$ labeling the torus-fixed flags $\sigma(\ff)=(\Span(e_{\sigma(1)}) \subset \ldots \subset \Span(e_{\sigma(1)},\ldots, e_{\sigma(k-1)}) \subset \CC^n)$ in $\CC^n$.
\item $\prod_{1\leq j \leq k-1}\prod_{i=j+1}^n(\lambda_{\sigma(i)}-\lambda_{\sigma(j)})$ is the equivariant Euler class of the tangent space of $\flag_{k-1}(\CC^n)$ at $\s(\ff)$.
\item Let $\widehat{\BN}_{\sigma(\ff)}=\rho^{-1}(\sigma(\ff)) \subset \widehat{\BN}^{k}(\CC^n)$ denote the fiber over $\bff$. Then $\Phi^\Lambda_{\sigma(\ff)}=(\int_{\widehat{\BN}_{\sigma(\ff)}} \tilde{\Phi}^{\mu|\Lambda})^{[0]}(\sigma(\ff))\in S^\bullet \mathfrak{t}^*$ is the differential-form-degree-zero part evaluated at $\sigma(\ff)$.
\end{itemize}
The Chern roots of the tautological bundle over $\flag_{k-1}(\CC^n)$ at the fixed point $\sigma(\ff)$ are represented by $\l_{\s(1)}, \ldots ,\l_{\s(k-1)}\in \mathfrak{t}^*$ and therefore 
\begin{equation}\label{alphasigmaf}
\tilde{\Phi}^{\mu|\Lambda}_{\s(\ff)}=\sigma \cdot \tilde{\Phi}^{\mu|\Lambda}_\ff=\tilde{\Phi}^{\mu|\Lambda}_\ff(\l_{\s(1)}, \ldots ,\l_{\s(k-1)})\in S^\bullet \mathfrak{t}^*,
\end{equation}
is the $\sigma$-shift of the polynomial $\tilde{\Phi}^{\mu|\Lambda}_{\ff}=(\int_{\BN_\ff} \tilde{\Phi}^{\mu|\Lambda})^{[0]}(\ff)\in S^\bullet \mathfrak{t}^*$ corresponding to the distinguished fixed flag $\ff$. By \eqref{cdff} the restriction of $V^{[k]}$ to the curvilinear part $\widehat{\CN}^{k}(\CC^n)$ is the tensor product
\[V^{[k]}|_{\widehat{\CN}^{k}(\CC^n)}=V \otimes  \calo_{\CC^n}^{[k]}\]
The test curve model formulated in Theorem \ref{embedgrass} (3) then tells that  
\[V^{[k]}=V \otimes \calo_{\CC^n}^{[k]}=V \otimes \cale\]
where $\cale$ is the tautological bundle over $\grass_{k}(S^\bullet \CC^n)$. Hence the Chern roots of $V^{[k]}$ on $\widehat{\CN}^{k}(\CC^n)$, and hence at the torus fixed points, are  the pairwise sums formed from Chern roots of $V$ and Chern roots of $\cale$. This means that  
\begin{equation}\label{cdff}
\Phi^{\mu|\Lambda}(V^{[k]})=\Phi^{\mu|\Lambda}(\theta_j,\l_i+\theta_j: 1\le i \le m, 1\le j \le r)
\end{equation}
is a polynomial in the tautological bundle.

\subsection{Transforming the localisation formula into iterated residue}\label{subsec:transform}
In order to handle the complex fixed point data in the Atiyah-Bott localisation formula efficiently and prove the residue vanishing theorem, we follow a crucial step outlined in \cite{bsz}, which involves transforming the right-hand side of equation (35) into an iterated residue. This process condenses the symmetry of the fixed point data and simplifies the combinatorial complexity. 

Proposition \ref{prop:ABtoresidue} for $d=k-1$, together with \eqref{flagloc},\eqref{alphasigmaf} and \eqref{cdff} gives
\begin{corollary}\label{propflag} Let $k\le m$. Then  
\begin{equation*}
\int_{\widehat{\BN}^{k}(\CC^n)} \tilde{\Phi}^{\mu|\Lambda}(V^{[k]})=\sires
\frac{\prod_{1\leq i<j\leq k-1}(z_i-z_j)\, \tilde{\Phi}^{\mu|\Lambda}_\ff(z_1, \ldots ,z_{k-1})\dbz}
{\prod_{l=1}^{k-1}\prod_{i=1}^m(\lambda_i-z_l)}
\end{equation*}
where 
\begin{itemize}
\item $\tilde{\Phi}_\ff^{\mu|\Lambda}=\int_{\widehat{\BN}_{\ff}} \tilde{\Phi}^{\mu|\Lambda}$ is the integral over the 
fiber, which we will calculate using a second equivariant localisation.
\item $z_1,\ldots, z_{k-1}$ are the $T$-weights of the tautological bundle over $\flag_{k-1}(\sym^{\le k-1}\CC^n)$. Equivalently, the $\GL(n)$ action on $\Hom^\ff(\CC^{k-1},\CC^n)$ reduces to $\GL(\CC_{[k-1]}) \subset \GL(n)$, and $z_1,\ldots z_{k-1}$ are the weights of the $T^{k-1}_\bz \subset \GL(\CC_{[k-1]})$ action. 
\end{itemize}
\end{corollary}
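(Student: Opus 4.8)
The plan is to evaluate the integral in two stages dictated by the fibration $\widehat{\BN}^k(\CC^n)\to\flag_{k-1}(\CC^n)$: push the class forward along the fibre, then localise the resulting class on the base flag manifold and recognise the fixed-point sum as an iterated residue by Proposition \ref{prop:ABtoresidue}. All four ingredients cited above enter in a fixed order, and the corollary is essentially their composition.

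First I would integrate along the fibre, writing $\int_{\widehat{\BN}^k(\CC^n)}\tilde{\Phi}^{\mu|\Lambda}(V^{[k]})=\int_{\flag_{k-1}(\CC^n)}\tilde{\Phi}_\ff^{\mu|\Lambda}$, where $\tilde{\Phi}_\ff^{\mu|\Lambda}=\int_{\widehat{\BN}_\ff}\tilde{\Phi}^{\mu|\Lambda}$ is by definition the pushed-forward $T$-equivariant class on the base, its pointwise evaluation being deferred to the second localisation. The torus $T\subset\GL(n)$ acts on $\flag_{k-1}(\CC^n)$ with isolated fixed points, the coordinate flags $\sigma(\ff)$ indexed by ordered $(k-1)$-subsets $\sigma\in S_n/S_{n-k+1}$, so Theorem \ref{abbv} applies verbatim and produces exactly \eqref{flagloc}: the base integral becomes the sum of the restrictions $\tilde{\Phi}_{\sigma(\ff)}^{\mu|\Lambda}$ divided by the equivariant Euler classes $\prod_{1\le j\le k-1}\prod_{i=j+1}^m(\lambda_{\sigma(i)}-\lambda_{\sigma(j)})$ of the tangent spaces $T_{\sigma(\ff)}\flag_{k-1}(\CC^n)$.

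Next I would substitute the shift identity \eqref{alphasigmaf}, namely $\tilde{\Phi}_{\sigma(\ff)}^{\mu|\Lambda}=\tilde{\Phi}_\ff^{\mu|\Lambda}(\lambda_{\sigma(1)},\ldots,\lambda_{\sigma(k-1)})$, so that every summand in \eqref{flagloc} is the value of the single distinguished-flag polynomial $\tilde{\Phi}_\ff^{\mu|\Lambda}(z_1,\ldots,z_{k-1})$ under $z_l\mapsto\lambda_{\sigma(l)}$. This is precisely the data to which Proposition \ref{prop:ABtoresidue} with $d=k-1$ applies, and invoking it rewrites the fixed-point sum as $\sires\frac{\prod_{1\le i<j\le k-1}(z_i-z_j)\,\tilde{\Phi}_\ff^{\mu|\Lambda}(\bz)\dbz}{\prod_{l=1}^{k-1}\prod_{i=1}^m(\lambda_i-z_l)}$. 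The single Vandermonde in the numerator absorbs the factors of the flag Euler class carrying two chosen indices $i,j\le k-1$, while the nested contour $|z_1|\ll\cdots\ll|z_{k-1}|$ of \eqref{defresinf} encodes the sum over all orderings, reproducing the fixed flags indexed by $S_n/S_{n-k+1}$. Finally \eqref{cdff} shows $\tilde{\Phi}_\ff^{\mu|\Lambda}$ is an honest polynomial in $z_1,\ldots,z_{k-1}$ (its variables being the twisted Chern roots $z_l+\theta_j$ alongside the $\theta_j$), so the residue is well defined and equals the claimed expression.

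The step demanding the most care is the bookkeeping inside Proposition \ref{prop:ABtoresidue}. The flag Euler class splits as the product of the internal Vandermonde $\prod_{1\le j<i\le k-1}(\lambda_{\sigma(i)}-\lambda_{\sigma(j)})$ and the remaining factors $\prod_{1\le j\le k-1}\prod_{i=k}^m(\lambda_{\sigma(i)}-\lambda_{\sigma(j)})$, and one must check that the orientation and normalisation of the iterated residue in \eqref{defresinf} turn the former into the numerator Vandermonde and the latter into the full product $\prod_{l=1}^{k-1}\prod_{i=1}^m(\lambda_i-z_l)$ with no stray sign or factorial. Since this is exactly the content of Proposition \ref{prop:ABtoresidue}, no new computation is needed; the only obligation is to confirm its hypotheses -- isolated fixed points, the range bound $k\le m$, and polynomial dependence on the Chern roots -- all of which hold by the flag fixed-point description and \eqref{cdff}.
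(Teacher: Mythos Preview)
Your proposal is correct and follows exactly the same route as the paper: the paper's own justification is the single sentence ``Proposition~\ref{prop:ABtoresidue} for $d=k-1$, together with \eqref{flagloc}, \eqref{alphasigmaf} and \eqref{cdff} gives'' the corollary, and you have unpacked precisely this composition in the same order (fibre integration, Atiyah--Bott on the base flag, the $\sigma$-shift identity, then the residue conversion, with \eqref{cdff} certifying polynomial dependence). Your extra paragraph on the Vandermonde bookkeeping is a welcome clarification but introduces nothing new.
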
 
\subsection{Second equivariant localisation over the base flag}\label{subsec:secondlocalization} Next we proceed a second equivariant localisation on $\widehat{\BN}_\ff$ to compute $\tilde{\Phi}^\Lambda_\ff(z_1,\ldots ,z_{k-1})$.  
By Proposition \ref{prop:support} the torus fixed points sit in the curvilinear locus 
\[\widehat{\CN}_\ff^{A_1+\ldots +A_s;\mu}(\CC^n) \subset \widehat{\CN}_\ff\]
where the lower index $\ff$ denotes the fiber over the distinguished flag $\ff$. The curvilinear locus $\widehat{\CN}_\ff$ sits over 
\[\widehat{\CHilb}_\ff=\rho^{-1}(\ff)\simeq \overline{P_{n,k-1}\cdot p_{n,k-1}} \subset \grass_{k-1}(\sym^{\le {k-1}}\CC_{[k-1]})\]
where  
\[p_{n,k-1}=\Span(e_1, \ldots, \sum_{\tau \in \mathcal{P}(k-1)}e_{\tau}) \in \grass_{k-1}(\sym^{\le k}\CC_{[k-1]}),\] 
and $P_{n,k-1} \subset \GL_n$ is the parabolic subgroup which preserves $\ff$. Equivalently, 
\[\widehat{\CHilb}_\ff=\overline{\phi(\Hom^\ff(\CC^{k-1},\CC^n)}\]
 where 
 \[\Hom^\ff(\CC^{k-1},\CC^n)=\{\psi \in \Hom(\CC^{k-1},\CC^n): \psi(e_i)\subset \CC_{[i]} \text{ for } i=1,\ldots, k-1\}\]
and $\CC_{[i]} \subset \CC^n$ is the subspace spanned by $e_1,\ldots, e_i$. 

The torus fixed points in $\widehat{\CHilb}^{k}_\ff \subset \grass_{k-1}(\sym^{\le k-1}\CC_{[k-1]})$ are subspaces of the form
\[F_\tau=\mathrm{Span}(e_{\tau_1}, e_{\tau_2} , \ldots , e_{\tau_{k-1}})\]
parametrised by $k-1$-tuples $\tau=(\tau_1,\ldots, \tau_{k-1})$ where $\tau_i \neq \tau_j$ for $1\le i <j \le k-1$ and 
\begin{equation}\label{tauconditions}
\tau_i = \{t^i_1,\ldots, t^i_{|\tau_i|}\} \text { such that } 1\le t^i_j \le i \text{ and } \sum_{j=1}^{|\tau_i|} t^i_j \le i \text{ for all } 1\le i \le k-1 
\end{equation}
We call these admissible $k-1$-tuples, and those $\tau$'s which correspond to fixed points in $\widehat{\CHilb}^{k}_\ff$ are called curvilinear admissible subsets, the set of these is $\mathcal{P}(k-1)$. 
 
 A point in $\widehat{\CN}_\ff \subset \prod_{\a \in \Pi(k)} \CHilb^\a(\CC_{[k-1]})$ has the form  $\xi=(\xi_A: A\subset \{1,\ldots, 
 k\})$. If this is torus fixed, then $\phi^\grass(\xi_{\{1\ldots, 
 k\}})=W_\tau \subset \sym^{\le k-1}\CC_{[k-1]}$ is a $k-1$-dimensional subspace for some $\tau=(\tau_1, \ldots, \tau_{k-1})$ and 
$\phi^\grass(\xi_A)=F_{A,\tau} \subset F_\tau$
is a torus-fixed subset of dimension $|A|-1$, hence 
\[F_{A,\tau}=\mathrm{Span}(e_{\tau_j}:j\in \Lambda_A)\]
for some $\Lambda_A \subset \{1,\ldots, k\}$ with $|\Lambda_A|=|A|-1$. The $T_\bz^{k-1}$ weights on the tautological bundle $\cale$ over $\grass_{k-1}(\symdot)$ are $z_1,\ldots, z_{k-1}$, hence for a subset $A \subset \{1,\ldots, k\}$ the equivariant Chern roots (i.e. torus weights) of $F_{A,\tau}$ are $\{z_{\tau_j}: j \in \Lambda_{A}\}$
where $z_{\tau_j}=\sum_{i \in \tau_j} z_i$. A simple but crucial consequence of \eqref{tauconditions} is the following
\begin{lemma}\label{crucial1} \begin{enumerate}
\item If $\tau \neq \{[1],[2],\ldots, [k-1]\}$, then at least one integer $2\le i \le k-1$ does not appear in $\tau$. Hence the $T^{k-1}_\bz$-weight of $F_{A,\tau}$ does not depend on $z_i$ for all $A$. 
\item If $\tau_{k-1} \neq [k-1]$ then $\tau$ does not contain $k-1$. In other words, the torus fixed points in $\widehat{\CHilb}^{k}_\ff$ which contain the weight $z_{k-1}$ are of the form $\tau=(\tau_1,\ldots, \tau_{k-2},[k-1])$.  
\end{enumerate}
\end{lemma}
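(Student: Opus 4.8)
The plan is to prove both statements as direct combinatorial consequences of the admissibility conditions \eqref{tauconditions}, reading everything off from the two constraints that the entries of each $\tau_j$ lie in $\{1,\dots,j\}$ and that they sum to at most $j$. The single lever driving both parts is the \emph{support constraint}: since every entry of $\tau_j$ is $\le j$, an integer $i$ can occur in $\tau_j$ only when $j\ge i$; in particular the top index $k-1$ can appear only inside the last slot $\tau_{k-1}$.

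For part (2) I would argue by contraposition. Because $i=k-1$ can only sit inside $\tau_{k-1}$, if $k-1$ occurs anywhere in $\tau$ then $k-1\in\tau_{k-1}$. But $\tau_{k-1}$ has positive entries summing to at most $k-1$, so a single entry equal to $k-1$ already exhausts the whole budget and forces $\tau_{k-1}=[k-1]$. Hence $\tau_{k-1}\ne[k-1]$ implies that $k-1$ appears in no slot, which is exactly the assertion that $\tau$ does not contain $k-1$.

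For part (1) the plan is a downward induction on the slot index, using the sum bound to pin each slot to a singleton. First note that $\tau_1$ is a nonempty multiset with all entries $\le 1$ and total $\le 1$, so necessarily $\tau_1=[1]$ and the integer $1$ always occurs. Now suppose, contrapositively, that every integer $i$ with $2\le i\le k-1$ occurs in $\tau$. Running the induction from $j=k-1$ downwards: by the support constraint $j$ can only lie in $\tau_j,\tau_{j+1},\dots,\tau_{k-1}$, and by the inductive hypothesis the later slots are already determined to be the singletons $[j+1],\dots,[k-1]$, none of which contains $j$; therefore $j\in\tau_j$, and the sum bound $\le j$ again forces $\tau_j=[j]$. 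This collapses $\tau$ to $([1],[2],\dots,[k-1])$, proving the contrapositive. The stated consequence about weights is then immediate, since $z_{\tau_j}=\sum_{i\in\tau_j}z_i$ omits any integer that appears in no slot, and hence none of the $F_{A,\tau}$ depends on the corresponding $z_i$.

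The only genuinely delicate point — and the reason part (1) excludes $i=1$ — is the base of the induction: one must use that each $\tau_j$ indexes an actual basis monomial and is therefore nonempty, which is precisely what pins $\tau_1=[1]$ and makes $1$ unavoidable while allowing some $i\ge 2$ to be the missing one. I expect no computational difficulty beyond bookkeeping the support and sum constraints; the only care needed is to phrase the induction so that the singleton conclusion at each step feeds correctly into the support constraint used at the next.
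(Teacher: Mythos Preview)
Your proof is correct and is exactly the argument the paper has in mind: the paper does not actually write out a proof of this lemma, merely flagging it as ``a simple but crucial consequence of \eqref{tauconditions}'', and you have supplied precisely that consequence by exploiting the support constraint $t^i_j\le i$ together with the sum bound $\sum_j t^i_j\le i$. Your downward induction for part~(1) and the one-line budget argument for part~(2) are the natural way to unpack those conditions, and your remark about why $i=1$ is excluded (namely that $\tau_1=[1]$ is forced) is the right explanation.
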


Let $\mathcal{F}_\ff$ denote the set of torus fixed points on $\widehat{\CN}^{A_1+\ldots +A_s;\mu}_\ff$, which naturally sits in $\widehat{\CN}^{k}_\ff$.  Diagram \eqref{diagramsix} provides a smooth ambient space 
\[\widehat{\BN}_\ff \subset \widehat{\grass}=\prod\limits_{(\a_1,\ldots, \a_s) \in \Pi(k)} \prod\limits_{i=1}^s \widehat{\grass}_{|\a_i|}(S^\bullet \CC_{[k-1]})\] 
and for $F \in \mathcal{F}_\ff$ let $\emu_F[\widehat{\BN}_\ff,\widehat{\grass}]$ denote the $T$-multidegree at $F$. The Rossman localisation formula with Corollary \ref{propflag} gives  
\begin{equation}\label{intnumberone} 
\int_{\widehat{\BN}^{k}(\CC^n)}\tilde{\Phi}^{\mu|\Lambda}(V^{[k]})=
\sum_{F\in \mathcal{F}_\ff} \sires \frac{\emu_F[\widehat{\BN}_\ff,\widehat{\grass}] \prod_{i<j}(z_i-z_j) \tilde{\Phi}^{\mu|\Lambda}_F(z_1,\ldots ,z_{k-1})}{
\Euler_F(\widehat{\grass})  \prod_{l=1}^{k-1}\prod_{i=1}^m(\lambda_i-z_l)} \,\dbz.
  \end{equation}
Now recall that
\[\Phi^{\mu|\Lambda}(V^{[k]})=\Phi(V^{[k]}) + \sum \limits_{\a \in \Pi(s)\setminus \Lambda} (-1)^\a \Phi(V^{\mu|\a}).\]
Let $F=(F_{A,\tau}: A \subset \{1,\ldots, {k-1})$ be a torus fixed point on $\widehat{\CN}_\ff$, where $\tau=(\tau_1,\ldots, \tau_{k-1})$. Next we identify $\tilde{\Phi}_F^{\mu|\Lambda}(\bz)$ in the formula. 
Recall from the introduction that $V(z)$ stands for the bundle $V$ tensored by the line $\mathbb{C}_z$, which is the representation of a torus $T$ with weight $z$. Hence its Chern roots are $z+\theta_1,\ldots ,z+\theta_r$. For $z=z_{\tau_j}$ the Chern roots of $V(z_{\tau_j})$ are $(\sum_{l\in \tau_j}z_l + \theta_i: 1\le i \le r)$ and for $A \subset \{1,\ldots, k-1\}$ we write  
\[V(\bz^{A,F})=V \oplus \left(\oplus_{j \in \Lambda_A} V(z_{\tau_j})\right),\]
which has rank $r|A|$.  
Let $\mu|\a=(\mu|\a_1,\ldots, \mu|\a_t) \in \Pi(k)$ be the partition of the $k$ points which we get by merging points according to the partition $\a \in \Pi(s)$ of the support, that is, $\mu|\a_i=\cup_{j\in \a_i}\mu_j$. Recall from \eqref{cdff} that on the curvilinear locus $V^{\mu|\a}$ is the tensor product 
\[V^{\mu|\a}=V^{[\mu|\a_1]}\oplus \ldots \oplus V^{[\mu|\a_t]}=(V \otimes \calo_{\CC^n}^{[\mu|\a_1]}) \oplus \ldots \oplus (V \otimes \calo_{\CC^n}^{[\mu|\a_t]})\]
and by the test curve model formulated in Theorem \ref{embedgrass} the 
tautological bundle is $\calo_{\CC^n}^{[k]}/\calo_\grass=\cale$, where 
$\cale$ is the tautological bundle over $\grass_{k-1}(\sym^{\le k-1}\CC^n)$. Hence the Chern roots of $V^{\mu|\a}=V^{[\mu|\a_1]}\oplus \ldots \oplus V^{[\mu|\a_t]}$ at the fixed point $F=(F_{A,\tau}: A\subset \{1,\ldots, k-1\})$ are 
\[\{\theta_\ell, \theta_\ell+z_{\tau_j}: 1\le \ell \le r, 1\le i \le t, j \in \Lambda_{\a_i}\}\]
Hence if we write
\[\Phi(V(\bz^{\mu|\a,F}))=\Phi(\oplus_{i=1}^t V(\bz^{\mu|\a_i,F}))\]
then 
\begin{equation}\label{phiviaz}
\Phi_F^{\mu|\Lambda}(\bz)=\Phi(V(\bz^{\mu|\Lambda,F}))+\sum \limits_{\a \in \Pi(s)\setminus \Lambda} (-1)^\a \Phi(V(\bz^{\mu|\a,F}))
\end{equation}
and hence 
\begin{equation}\label{tildephiviaz}
\tilde{\Phi}_F^{\mu|\Lambda}(\bz)=\pi_\Lambda^* \emu_{\pi_\Lambda(F)}[\Hilb^{A_1,\ldots ,A_s;\mu}(\CC^n),\BHilb^k(\CC^n)] \cdot \sum \limits_{\a \in \Pi(s)} (-1)^\a \Phi(V(\bz^{\mu|\a,F}))
\end{equation}
which we can substitute into \eqref{intnumberone}. We use the shorthand notation 
\begin{equation}\label{tildephi}
\tilde{\Phi}(V(\bz^{\mu|\a,F}))=\Phi(V(\bz^{\mu|\a,F}) \pi_\Lambda^* \emu_{\pi_\Lambda(F)}[\Hilb^{A_1,\ldots ,A_s;\mu}(\CC^n),\BHilb^k(\CC^n)]
\end{equation} 
for the term corresponding to $\a$.

\subsection{Residue Vanishing theorem on fully nested Hilbert schemes}
The price for using the fully nested Hilbert scheme and the sieve formula to reduce integration to the curvilinear part is that the curvilinear locus $\widehat{\CN_\ff}$ over the flag $\ff$ is not irreducible: besides the main component there are further extra components.  The following theorem say that the contribution of the fixed points sitting on extra components is zero. 

\begin{theorem}{\textbf{Residue Vanishing Theorem on the fully nested Hilbert scheme}}\label{vanishing1}  Let $\a=(\a_1,\ldots, \a_t)\in \Pi(s)$ be a partition with $t>1$. Then the corresponding sum in \eqref{intnumberthree} vanishes:
\begin{equation}\label{eqn:residue}
\sum_{F\in \mathcal{F}_\ff} \sires \frac{\emu_F[\widehat{\BN}_\ff,\widehat{\grass}] \prod_{i<j}(z_i-z_j) \tilde{\Phi}(V(\bz^{\mu|\a,F}))\dbz}{
\Euler_F(\widehat{\grass})  \prod_{l=1}^{k-1}\prod_{i=1}^m(\lambda_i-z_l)}=0.
\end{equation}
\end{theorem}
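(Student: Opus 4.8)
The plan is to evaluate the iterated residue in the order dictated by its contour $|z_1|\ll\cdots\ll|z_{k-1}|$, taking $\res_{z_{k-1}=\infty}$ first (the outermost, largest contour), and to exploit the elementary fact that a one-variable rational function $P(z)/Q(z)$ with $\deg P\le \deg Q-2$ has vanishing residue at $z=\infty$. I would first show that this outermost residue annihilates all but the ``top'' fixed points, and then descend recursively, the multi-block structure forced by $t>1$ eventually producing a degree deficiency that kills the whole expression.

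The first step is to isolate the dependence on $z_{k-1}$. By Lemma~\ref{crucial1}(2), any fixed point $F$ with $\tau_{k-1}\neq[k-1]$ has $k-1\notin\tau$, so none of the weights $z_{\tau_j}$ nor the weights of the subspaces $F_{A,\tau}$ involve $z_{k-1}$; consequently $\tilde{\Phi}(V(\bz^{\mu|\a,F}))$, the multiplicity $\emu_F[\widehat{\BN}_\ff,\widehat{\grass}]$, and $\Euler_F(\widehat{\grass})$ are all independent of $z_{k-1}$ (using Lemma~\ref{crucial1}(1) for the tangent weights). For such $F$ the only factors carrying $z_{k-1}$ are the Vandermonde $\prod_{i<j}(z_i-z_j)$ in the numerator, of degree $k-2$ in $z_{k-1}$, and the flag factor $\prod_{i=1}^m(\lambda_i-z_{k-1})$ in the denominator, of degree $m$. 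Since $k\le m$ we have $k-2\le m-2$, so $\res_{z_{k-1}=\infty}$ kills each such term individually. Hence only the top fixed points, those with $\tau_{k-1}=[k-1]$ and therefore $z_{\tau_{k-1}}=z_{k-1}$, can survive.

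For these top fixed points I would use the block decomposition. Because $t>1$, the index $k-1$ lies in exactly one block $\a_{i_0}$, so $z_{k-1}$ enters only the $l=i_0$ factor of $\mathcal{R}^\a$, while the other $t-1\ge1$ factors are untouched. Taking $\res_{z_{k-1}=\infty}$ on the $i_0$-factor reproduces the recursive structure of the curvilinear locus: it eliminates the top point of the block $\a_{i_0}$ and leaves an iterated residue in $z_1,\ldots,z_{k-2}$ attached to a configuration with the same number $t$ of blocks but one fewer point. Iterating this descent, the block count stays $\ge2$ throughout. I expect the recursion to terminate at a configuration in which the split form $\prod_l(\cdots)$ genuinely factors across at least two blocks; there the Euler/Haiman factors $z_1\cdots z_{|\a_l|-1}$ supplied by each $\epd{\a_l}$ (the balanced case) together with the codimension-$(s-1)$ defect of $\Hilb^{A_S}(\CC^n)\subset\Hilb^{\bA_S}(\CC^n)$ leave the numerator of strictly smaller degree than is needed to produce the coefficient of $(z_1\cdots z_{k-1})^{-1}$, forcing the residue to vanish.

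The main obstacle is the treatment of the top fixed points and the rigor of the recursive descent. Two things need care. First, one must verify that the equivariant multiplicities $\emu_F[\widehat{\BN}_\ff,\widehat{\grass}]$ and $\pi_\Lambda^*\emu_{\pi_\Lambda(F)}[\Hilb^{A_1,\ldots,A_s;\mu}(\CC^n),\BHilb^k(\CC^n)]$ of the singular ambient spaces carry exactly the homogeneity degrees the count requires, so that the degree inequalities are not spoiled by these singular duals. Second, one must match the residue obtained after eliminating $z_{k-1}$ with the residue formula for the reduced, one-point-shorter configuration, so that the induction hypothesis applies with $t$ unchanged. Establishing this compatibility, namely that the residue operation commutes with the geometric collision of the top point of a block, is the technical heart of the theorem and the step I expect to be genuinely hard.
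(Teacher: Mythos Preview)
Your first step contains an error. Lemma~\ref{crucial1} concerns only the weights of the fixed subspaces $F_{A,\tau}$ themselves; it says nothing about the tangent weights of the ambient Grassmannian product. The Euler class $\Euler_F(\widehat{\grass})$ is the product over all factors of $\Euler\bigl(\Hom(F_{A,\tau},(S^\bullet\CC_{[k-1]})/F_{A,\tau})\bigr)$, and when $\tau_{k-1}\neq[k-1]$ the quotient $(S^\bullet\CC_{[k-1]})/F_{A,\tau}$ certainly contains $e_{k-1}$, so $\Euler_F(\widehat{\grass})$ does depend on $z_{k-1}$. The same applies to the equivariant multiplicity $\emu_F[\widehat{\BN}_\ff,\widehat{\grass}]$, which is a Poincar\'e dual inside $T_F\widehat{\grass}$ and hence a polynomial in all of its weights. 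So the degree count $k-2\le m-2$ you wrote is not the relevant one; you would need a global homogeneity argument tracking the $z_{k-1}$-degree through all four of $\emu_F$, the Vandermonde, $\Euler_F(\widehat{\grass})$, and the flag denominator, and you have not supplied it.

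More importantly, the paper does not proceed by a recursive descent in $k$. After showing (as in \cite{berczitau2}) that the residue vanishes whenever $V(\bz^{\mu|\a,F})$ omits $z_{k-1}$ --- which happens both for $\tau_{k-1}\neq[k-1]$ or $\tau_{k-2}\neq[k-2]$, and for $t>1$ whenever $F$ lies on the \emph{main} component $\widehat{\CN}_\ff^{main}$ --- the surviving fixed points are pinned down geometrically: they lie on a specific boundary divisor $\widehat{\CN}_\ff^{bound}=\pi_\Lambda^{-1}(\widehat{\CHilb}^{bound}_\ff)$, an extra (non-main) component of $\widehat{\CN}_\ff$ of codimension one in $\widehat{\BN}_\ff$, whose normal direction at the relevant fixed points lies in $\CC_{[k-1]}$ (i.e.\ corresponds to non-punctual deformations). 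The vanishing is then obtained by introducing a tower $\widehat{\CN}_\ff^{e_{k-1},\perp}\hookrightarrow\widehat{\CN}_\ff^{e_{k-1}}\hookrightarrow\widehat{\CN}_\ff^{bound}$ in which the first inclusion has $\PP^1$ normal fibers along which $\tilde{\Phi}(V(\bz^{\mu|\a,\tau}))$ is constant; integrating over that $\PP^1$ and iterating kills the sum. The only new point relative to \cite{berczitau2} is checking that the extra multidegree factor in $\tilde{\Phi}$ is pulled back from $\widehat{\CHilb}^k_\ff$ and so is constant along those $\PP^1$ fibers as well. Your block-by-block reduction never sees these extra components or their normal-bundle structure, and there is no evident way to match the post-$z_{k-1}$ residue with the formula for a one-point-shorter configuration precisely because the surviving fixed points are off the main component.
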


\begin{proof}
The argument of the proof of Theorem in \cite{berczitau2} works with small amendments. We denoted by $\widehat{\CN}_\ff^{main} \subset \widehat{\CN}_\ff$ the main component, which maps dominantly to $\widehat{\CHilb}^k_\ff$ under $\pi_\Lambda$. Let $F=(F_{A,\tau}: A \subset \{1,\ldots, k-1\}) \in \widehat{\CN}^{A_1,\ldots, A_s;\mu}_\ff \subset \widehat{\CN}_\ff$ be a torus fixed point with $\tau=([1], \tau_2,\ldots, \tau_{k-1})$. In \cite{berczitau2} we show that 
\begin{itemize}
\item[(i)] If $\tau_{k-1}\neq [k-1]$ or $\tau_{k-2}\neq [k-2]$ then $V(\bz^{\a,\tau})$ does not contain $z_{k-1}$, hence the corresponding residue is zero.
\item[(ii)] If $\a=(\a_1,\ldots, \a_t)$ with $t>1$ and $F\in \widehat{\CN}_\ff^{main}$ then $V(\bz^{\a,\tau})$ does not contain $z_{k-1}$, hence the corresponding residue is zero. 
\end{itemize}
Hence the fixed points $F$ where the residue does not vanish have the form $F=(F_{A,\tau}: A \subset \{1,\ldots, k-1\})$ with $\tau=([1], \tau_2,\ldots, \tau_{k-3},[k-2],[k-1])$, and hence they sit in the component 
\[\widehat{\CN}_\ff^{bound}=\pi_{\Lambda}^{-1}(\widehat{\CHilb}^{bound}_\ff)\]
where 
\[\widehat{\CHilb}^{bound}_\ff=\{p_{k-1} \wedge v_{k-1}: [v_{k-1}] \in \PP[\CC_{[k-1]}], p_{k-1} \in \widehat{\CHilb}_{\ff_{-1}}\} \subset \widehat{\CHilb}_\ff\]
is a boundary divisor of $\widehat{\CHilb}_\ff$. We show that 
\begin{itemize}
\item[(iii)] $\widehat{\CN}_\ff^{bound}$ is a component of $\widehat{\CN}_\ff$ which has codimension 1 in $\widehat{\BN}_\ff$, and normal bundle $N^\ff$.
\item[(iv)] The normal direction $N^\ff_F=\emu_F[\widehat{\CN}_\ff^{bound},\widehat{\BN}_\ff]$ at a fixed point $F \in \widehat{\CN}_\ff^{bound} \setminus \widehat{\CN}^{main}_\ff$ satisfying sits in $\CC_{[k-1]}$. In other works, the first order deformations of $F$ are not punctual, they are supported in at least two points.
\end{itemize}
Next, in \cite{berczitau2} we introduced the subset 
\[\widehat{\CHilb}^{e_{k-1}}_\ff=\{p_{k-1} \wedge e_{k-1}: p_{k-1} \in \widehat{\CHilb}_{\ff_{-1}}\} \subset \widehat{\CHilb}_\ff^{bound}\]
and 
\[\widehat{\CN}_\ff^{e_{k-1}}=\pi_{\Lambda}^{-1}(\widehat{\CHilb}^{e_{k-1}}_\ff) \subset \widehat{\CN}_\ff^{bound}\]
and finally 
\[\widehat{\CN}_\ff^{e_{k-1},\perp}=\pi_{\Lambda}^{-1}(\widehat{\CHilb}^{e_{k-1}}_\ff) \cap \pi_{[1k]}^{-1}(e_{k-1}) \subset \widehat{\CN}_\ff^{bound}\] 
These fit into the diagram 
\[\xymatrix{ \widehat{\CN}_\ff^{e_{k-1},\perp} \ar@{^{(}->}[r]^{\iota} & \widehat{\CN}_\ff^{e_{k-1}}  \ar@{^{(}->}[r]^j & \widehat{\CN}_\ff^{bound}}\] 
where 
\begin{itemize}
\item[(v)] The normal bundle of $\iota$ is $\PP^1$ and $\Phi(V(\bz^{\mu|\a,\tau}))$ is constant along the fibers of $\iota$.
\item[(vi)] The normal bundle of $j$ is $\prod_{i=1}^{k-2}(e_{k-1}-e_i)$.
\end{itemize}
The crucial point is that (v) is true with $\tilde{\Phi}(V(\bz^{\mu|\a,\tau}))$ too: the multidegree factor in \eqref{tildephi} is pulled back from $\widehat{\CHilb}^k_\ff$, and the normal of $\iota$ sits in the fiber of $\pi_\Lambda$. The rest of the proof is the same as in \cite{berczitau2}: the integration along the normal bundle of $\iota$ has the same form, and we conclude by iteration of the formula that the sum of the residues over the fixed points must vanish. 
\end{proof}

\subsection{Reducing integration from the fully nested to the curvilinear Hilbert scheme}

The residue vanishing theorem reduces the formula \eqref{intnumberone} to 
\begin{equation}\label{intnumberthree} 
\int_{\widehat{\BN}^{k}(\CC^n)}\tilde{\Phi}_f^{\mu|\Lambda}(V^{[k]})
=\sum_{F\in \mathcal{F}_\ff} \sires \frac{\emu_F[\widehat{\BN}_\ff,\widehat{\grass}] \prod\limits_{i<j} (z_i-z_j) \tilde{\Phi}(V(\bz^{\mu|\Lambda,F}))}{
\Euler_F(\widehat{\grass})  \prod_{l=1}^{k-1}\prod_{i=1}^m(\lambda_i-z_l)} 
  \end{equation}
But the bundle $V^{\mu|\Lambda}$ is pulled-back from a small neighborhood $\CHilb^{A_1+\ldots +A_s;\mu}_\nabla(\CC^n)$ of $\CHilb^{A_1+\ldots +A_s;\mu}(\CC^n)$ in $\Hilb^{A_1,\ldots, A_s;\mu}(\CC^n)$.  Hence the right hand side of \eqref{intnumberthree} is the localisation formula for the integral over the small neighborhood, which we formulate in the following corollary.
\begin{corollary}  
\begin{equation}\label{intnumberfour}
\int_{\widehat{\BN}^{k}(\CC^n)}\tilde{\Phi}_f^{\mu|\Lambda}(V^{[k]})=\int_{\CHilb^{A_1+\ldots +A_s;\mu}_\nabla(\CC^n)} \Phi_f(V^{[k]})
\end{equation}
\end{corollary}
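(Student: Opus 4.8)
The plan is to read \eqref{intnumberthree} as the Rossmann localisation (Proposition~\ref{rossman}) of the right-hand side integral, once the Residue Vanishing Theorem~\ref{vanishing1} has pruned the fixed points that do not lie on the main curvilinear component. I would organise the argument in three steps.

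First, I would localise the right-hand side directly. The neighbourhood $\CHilb^{A_1+\ldots+A_s;\mu}_\nabla(\CC^n)$ fibres over $\flag_{k-1}(\CC^n)$, so I would apply an outer Atiyah--Bott localisation over the flag and transform it into the outer residue exactly as in Corollary~\ref{propflag}, producing the factor $\prod_{i<j}(z_i-z_j)\big/\prod_{l=1}^{k-1}\prod_{i=1}^m(\lambda_i-z_l)$ and reducing matters to the fibre over the distinguished flag $\ff$. On this fibre the curvilinear subset sits inside the smooth ambient $\widehat{\grass}$, so a second, Rossmann, localisation makes each fixed point $F$ contribute $\emu_F[\CHilb^{A_1+\ldots+A_s;\mu}_\ff,\widehat{\grass}]\big/\Euler_F(\widehat{\grass})$ against the same integrand $\Phi_f(V^{[k]})$.

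Second, I would match the two fixed-point sets. By Theorem~\ref{vanishing1} the only surviving summands in \eqref{intnumberthree} are those attached to the main component $\widehat{\CN}_\ff^{main}$, on which $\pi_\Lambda$ restricts to a birational morphism onto the curvilinear subset fibre; this gives a bijection between the surviving $F\in\mathcal{F}_\ff$ and the torus fixed points appearing in Step~1. The Chern-root data of $V^{[k]}$ agree across the bijection because $V^{\mu|\Lambda}$ is pulled back along $\pi_\Lambda$ and its roots are the pairwise sums $\theta_\ell+z_{\tau_j}$ recorded in \eqref{cdff}.

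Third---the crux---I would reconcile the local weight factors. The decoration built into $\tilde{\Phi}$ in \eqref{tildephi}, namely $\pi_\Lambda^*\emu_{\pi_\Lambda(F)}[\Hilb^{A_1,\ldots,A_s;\mu}(\CC^n),\BHilb^k(\CC^n)]$, is precisely the equivariant dual relating the balanced neighbourhood to the geometric-subset neighbourhood; multiplying it by $\emu_F[\widehat{\BN}_\ff,\widehat{\grass}]\big/\Euler_F(\widehat{\grass})$ ought to reproduce the factor $\emu_F[\CHilb^{A_1+\ldots+A_s;\mu}_\ff,\widehat{\grass}]\big/\Euler_F(\widehat{\grass})$ of Step~1. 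The main obstacle is this multiplicativity of equivariant multiplicities: one has to verify, in the normal-crossing sense of \S\ref{subsec:epdmult}, that the geometric-subset condition cuts the curvilinear locus out of $\widehat{\BN}_\ff$ transversally enough for the multidegrees to factor, equivalently that $\pi_\Lambda$ does not distort the Rossmann contributions along the main component. Granting this compatibility, pushforward invariance under the birational $\pi_\Lambda$ (Remark~\ref{remark:pullback}) identifies the pruned residue sum with the localisation computed in Step~1, which is exactly \eqref{intnumberfour}.
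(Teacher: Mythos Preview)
Your overall strategy---localise the right-hand side and match the resulting fixed-point sum to \eqref{intnumberthree}---is precisely the paper's line of argument, and your Step~3 correctly isolates the multiplicativity of equivariant multiplicities as the place where the bookkeeping has to be checked. There is, however, a misattribution in Step~2 that you should repair.

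Theorem~\ref{vanishing1} does not prune the fixed-point set $\mathcal{F}_\ff$ down to the main component $\widehat{\CN}_\ff^{main}$. What it kills are the sieve summands $\Phi(V(\bz^{\mu|\alpha,F}))$ for $\alpha\in\Pi(s)$ with more than one block; after that, \eqref{intnumberthree} is still a sum over \emph{all} $F\in\mathcal{F}_\ff$, including those sitting on the extra components of $\widehat{\CN}_\ff$. So you cannot pass to the right-hand side by setting up a bijection of fixed points through $\pi_\Lambda$. The mechanism the paper invokes is different and simpler: the surviving integrand $\tilde{\Phi}(V(\bz^{\mu|\Lambda,F}))$ depends on $F$ only through $\pi_\Lambda(F)$, because $V^{\mu|\Lambda}=V^{[k]}$ is the ordinary tautological bundle on $\GHilb^k(\CC^n)$ and the multiplicity factor in \eqref{tildephi} is likewise $\pi_\Lambda$-pulled back. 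Since $\pi_\Lambda$ is birational, Remark~\ref{remark:pullback}(2) lets you push the entire expression in \eqref{intnumberthree} down to a Rossmann sum on $\widehat{\BHilb}^k_\ff$; then the multiplicity factor simply undoes the extension of integration domain from \eqref{integralextend}, landing you on $\CHilb^{A_1+\ldots+A_s;\mu}_\nabla(\CC^n)$. In other words, the passage is by birational pushforward of a pulled-back class, not by a bijection of surviving fixed points. Once you rephrase Step~2 this way, your Step~3 collapses into the observation that \eqref{integralextend} was set up precisely so that the multiplicity factor plays this role, and no separate transversality check is needed.
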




In the next sections we will identify the Haiman bundle $B$ above which can be considered as the normal bundle of $\Hilb^{A_1+\ldots +A_s}(\CC^n)$ in $\Hilb^{A_1,\ldots, A_s}(\CC^n)$, and the equivariant integration formula on the curvilinear geometric subset $\Hilb^{A_1+\ldots +A_s}(\CC^n)$ coming from the Kazarian model.

\subsection{The normal bundle of $\Hilb^{A_1+\ldots +A_s}(\CC^n)$ in $\Hilb^{A_1,\ldots, A_s}(\CC^n)$}\label{subsec:normalbundle}

Recall that $\Phi_f(V^{[k]}) \in \Omega^\bullet(\GHilb^{k+1}(\CC^n))$ is properly supported form as in Definition \ref{geocond}. This means that $\supp(\Phi_f(V^{[k]})$ is locally irreducible at every point and 
\[\supp(\Phi_f) \cap \GHilb^{k}_0(\CC^n) \subset \CHilb^{k}(\CC^n)\]
Recall also that for regular tuple $(A_1,\ldots, A_s)$ 
\[\CHilb^{k}(\CC^n) \cap \Hilb^{A_1,\ldots, A_s|\mu}(\CC^n)=\Hilb^{A_1+\ldots +A_s|\mu}(\CC^n).\]
and hence 
\[\supp(\Phi^{\mu|\Lambda}_f(V^{[k]}) \cap \GHilb^{k}_0(\CC^n) \subset \Hilb^{A_1+\ldots +A_s;\mu}(\CC^n)\]

\begin{definition} We say that $\supp(\Phi^{\mu|\Lambda})$ is locally modeled as a bundle $B$, if there is a topological isomorphism $\rho$:
\begin{equation}\label{localmodelgeneral}
\xymatrix{\supp(\Phi^{\mu|\Lambda}) \ar[d]  \ar[r]^\rho &  B  \ar[d]  \\
  \supp(\Phi^{\mu|\Lambda})\cap \GHilb^{k}_0(\CC^n)  \ar@{^{(}->}[r] &  \Hilb^{A_1+\ldots +A_s}(\CC^n)}
 \end{equation}
\end{definition}

In case of a local model $B$ one applies the Thom-isomorphism to obtain 
\begin{equation}
\int_{\CHilb^{A_1+\ldots +A_s;\mu}_\nabla(\CC^n)} \Phi(V^{\mu|\Lambda})=\int_{\Hilb^{A_1+\ldots +A_s;\mu}(\CC^n)} \frac{\Phi(V^{\mu|\Lambda})}{\Euler(B)}
\end{equation}
This formula reduces the integration over $\Hilb^{A_1,\ldots, A_s}(\CC^n)$ to integration over the curvilinear component $\Hilb^{A_1+\ldots +A_s}(\CC^n)$. Applying Theorem \ref{main1} with $A=A_1+\ldots +A_s$ we get the desired formula 
\[\int_{\Hilb^{A_1+\ldots +A_s;\mu}(\CC^n)} \frac{\Phi(V^{\mu|\Lambda})}{\Euler(B)}=\int_X \sires \frac{\prod_{1\le i<j \le k-1}(z_i-z_j)\mathrm{ePD}[Q(A) \subset \mathrm{Alg}_\bd(N_\bullet)]\Phi(V(\bz))d\bz}{\prod_{w(i)+w(j)\le w(m)}(z_i+z_j-z_m)(z_1\ldots z_{k-1})^n \Euler(B)(\bz)}\prod_{i=1}^{k-1} s_X\left(\frac{1}{z_i}\right).\]
for the deepest term in the sieve, which gives the proof of Theorem \ref{main2}

We can't prove that $\Phi^{\mu|\Lambda}_f(V^{[k]})$ is locally modeled for any tuple $A_1,\ldots, A_s$, but we prove   

\begin{proposition}
$\Phi_f^{\mu|\Lambda}(V^{[k]})$ is locally modeled when $\dim(A_1)=\ldots =\dim(A_s)$.
\end{proposition}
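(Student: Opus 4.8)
The plan is to establish the local model diagram \eqref{localmodelgeneral} by exhibiting the curvilinear component $\Hilb^{A_1+\ldots+A_s}(\CC^n)$ as the zero locus of a regular section of a rank $s-1$ vector bundle $B$, the generalised Haiman bundle, and then to obtain the topological isomorphism $\rho$ from a $\GL_n$-equivariant tubular neighbourhood of the curvilinear component inside its support. By Proposition \ref{prop:support} and Lemma \ref{lemma:support} the form $\Phi_f^{\mu|\Lambda}(V^{[k]})$ is properly supported, so $\supp(\Phi_f^{\mu|\Lambda})$ meets the punctual locus $\GHilb^k_0(\CC^n)$ only along $\Hilb^{A_1+\ldots+A_s;\mu}(\CC^n)$. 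It therefore suffices to work in a neighbourhood of the curvilinear component and to show that there $\supp(\Phi_f^{\mu|\Lambda})$ retracts onto it as a neighbourhood of the zero section of $B$, with $\Euler(B)=z_1\cdots z_{s-1}$ as asserted in Theorem \ref{main2}; the required reduction of integrals then follows by the Thom isomorphism.

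First I would construct $B$ from the support map of \S\ref{sec:geomsubsets}. Restricting the collision data to the curvilinear locus, where the $s$ support points come together along a single smooth curve, the first-order normal directions to $\Hilb^{A_1+\ldots+A_s}(\CC^n)$ are recorded by the relative positions of the $s$ pieces along that curve, taken modulo the overall affine reparametrisation; this is an $(s-1)$-dimensional datum, matching the codimension of Corollary \ref{prop:components}(2). In the balanced case $\dim A_1=\ldots=\dim A_s$ these parameters assemble into an honest vector bundle over the curvilinear component, because the $s$ subschemes all have the same length $\dim_\CC A_i$ and therefore degenerate with equal weight. Concretely I would realise $B$ via the tautological bundles $(D_i)$ of the Kazarian tower of \S\ref{subsec:kazarianmodel}, so that its torus weights are $z_1,\ldots,z_{s-1}$; for $A_i=\CC$ this recovers the classical Haiman normal bundle and the monomial $z_1\cdots z_{s-1}$ of \cite{berczitau2}.

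Next I would prove the local complete intersection statement. By Lemma \ref{characterisation} a subscheme $\xi$ supported at a point lies in the curvilinear component precisely when $\frakm^{\,l}\not\subset I_\xi$, where $l=l(\lambda_1)+\ldots+l(\lambda_s)$ in the notation of Definition \ref{partialorder}, so that the curvilinear component is the locus where the degree-$l$ leading part survives and its complement inside $\Hilb^{A_1,\ldots,A_s}(\CC^n)$ is cut out by the vanishing of $s-1$ leading coefficients. The key point is that, in the balanced case, these $s-1$ conditions are independent at a generic curvilinear point and form a regular sequence, so that $\Hilb^{A_1+\ldots+A_s}(\CC^n)$ is the zero scheme of a regular section of $B$ and $B$ is locally free of rank exactly $s-1$. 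I would check this by a transversality computation in the explicit coordinates of the test jet model of Theorem \ref{embedgrass}, where the relative collision parameters $v_{ij}$ along the curve occur as genuine coordinates, so that the $s-1$ defining equations have linearly independent differentials along the curvilinear locus.

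The main obstacle is precisely this regularity: one must show that the $s-1$ defining equations form a regular sequence and that the normal cone of the curvilinear component is a vector bundle of rank $s-1$, rather than carrying excess or non-reduced structure in codimension $\ge 2$. This is where the balanced hypothesis is indispensable. When the dimensions $\dim A_i$ differ, the collision of pieces of unequal length produces normal directions of different orders, the equivariant dual $\epd{\Hilb^{A_{\a_l}}(\CC^n),\Hilb^{\bA_{\a_l}}(\CC^n)}$ ceases to be a monomial in the $z_i$, and one expects the normal cone to fail to be a vector bundle, so that no local model of the form \eqref{localmodelgeneral} exists. In the balanced case I expect the regularity to follow from the transversality check above at a generic point of the curvilinear locus, which I would carry out explicitly in the test jet coordinates to conclude.
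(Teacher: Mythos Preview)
Your plan is plausible in spirit but takes a harder route than the paper, and at one point is imprecise.

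The paper's argument is essentially a reduction via the \emph{support map}. Because $\dim A_1=\ldots=\dim A_s$, the rational map $\supp:\Hilb^{A_1,\ldots,A_s}(\CC^n)\dashrightarrow\Hilb^s(\CC^n)$ sending a subscheme to its support extends to a torus-equivariant morphism of balanced neighbourhoods, giving a commuting square
\[
\xymatrix{
\Hilb^{A_1+\ldots+A_s}(\CC^n)\ar@{^{(}->}[r]\ar[d]^{\supp} & \BHilb^{A_1,\ldots,A_s}(\CC^n)\ar[d]^{\supp}\\
\CHilb^s(\CC^n)\ar@{^{(}->}[r] & \BHilb^s(\CC^n)
}
\]
in which both horizontal arrows have codimension $s-1$ and the left column is irreducible. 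Hence the local picture near a generic curvilinear point upstairs is pulled back from the local picture downstairs, and downstairs one already knows from \cite{berczitau2} that $\Phi_f$ is locally modeled by the Haiman bundle $B^s=\calo_{\CC^n}^{[s]}/\calo_{\CC^n}$. So the bundle $B$ is simply $\supp^*B^s$; no transversality computation in test jet coordinates is needed, and the Euler class $z_1\cdots z_{s-1}$ is inherited directly. The balanced hypothesis enters not through a regularity-of-sections argument but through the existence and equivariance of $\supp$ on the balanced neighbourhood: if the $\dim A_i$ differ, there is no compatible torus action making the square commute.

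By contrast, you propose to build $B$ from scratch and check by hand that the $s-1$ collision equations cut out the curvilinear component transversally. This could in principle be made to work, but two points deserve care. First, your sentence ``realise $B$ via the tautological bundles $(D_i)$ of the Kazarian tower'' is off: the Kazarian tower for $A_1+\ldots+A_s$ carries bundles with weights $z_1,\ldots,z_{k-1}$, not $z_1,\ldots,z_{s-1}$, and there is no obvious way to single out the $s-1$ normal directions from those bundles alone; what actually picks them out is the map to $\Hilb^s$. Second, your transversality check at a generic curvilinear point would establish that the normal \emph{cone} has rank $s-1$ there, but to get a topological tubular neighbourhood as in the definition of ``locally modeled'' you still need to globalise this over possibly singular loci; the pullback argument sidesteps this entirely by importing the global model from the $s$-point case.
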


\begin{proof}
Let $\mu=(\mu_1,\ldots, \mu_s)$ where $|\mu_1|=\ldots =|\mu_s|=\dim(A_1)=\ldots =\dim(A_s)$. The diagram 
\begin{equation}\label{diagramseven}
\xymatrix{
\Hilb^{A_1+\ldots +A_s}(\CC^n) \ar@{^{(}->}[r] \ar[d]^{\supp} & \BHilb^{A_1,\ldots, A_s}(\CC^n) \ar[d]^{\supp} \\
\CHilb^s(\CC^n) \ar@{^{(}->}[r] & \BHilb^s(\CC^n)}
\end{equation}
which includes the balanced Hilbert schemes is torus-equivariant due to the equal size of the supporting algebras. The codimension of the top horizontal embedding is equal to the codimension of the bottom horizontal embedding (this codimension is $s-1$), and the spaces on the left hand side are irreducible. We conclude that the  $\Hilb^{A_1+\ldots +A_s}(\CC^n)$ in $\BHilb^{A_1,\ldots, A_s}(\CC^n)$ at a generic point is isomorphic to the local neighborhood of $\CHilb^s(\CC^n)$ in $\BHilb^s(\CC^n)$ at a generic point (generic means curvilinear, not boundary point). Hence $\Phi^{\mu|\Lambda}_f(V^{[k]})$ is locally modeled with $\supp^*(B^s)$ where 
\[B^s=\calo_{\CC^n}^{[s]}\calo_{\CC^n}\]
is the tautological bundle which is the local model for $\Phi_f(V^{[s]})$ for stable $f$. 
\end{proof}

Note that if $\Phi^{\mu|\Lambda}_f(V^{[k]})$ is locally modeled then 
\[\Euler(B)=\epd{\Hilb^{A_1+\ldots +A_s}(\CC^n),\Hilb^{A_1,\ldots ,A_s}(\CC^n)}\]
is the equivariant dual. We conjecture that the same integral formula holds with $\Euler(B)$ replaced by this equivariant dual even if our form is not locally modeled. In the following table we collected these duals for Nakajima classes which are geometric subsets with Morin algebras: if $A_d=\CC[t][/t^d$ denotes the Morin algebra of order $d$ then 
\[\mathrm{Nak}(d_1,\ldots, d_s)=\Hilb^{A_{d_1},\ldots, A_{d_s}}(\CC^n),\]
and 
\[\Hilb^{A_{d_1}+\ldots +A_{d_s}}(\CC^n)=\Hilb^{A_{d_1+\ldots +d_s}}(\CC^n)\]
by definition (the sum Morin algebras is Morin). 

\begin{center}
\begin{tabular}{|c|c|c|}
$d_1+\ldots +d_s$ & $(d_1, \ldots , d_s)$ & $\epd{\mathrm{Nak}(d_1+\ldots +d_s) \subset \mathrm{Nak}(d_1,\ldots, d_s)}$ \\
\hline
$2$ & $(1,1)$ & $z_1$\\
\hline
$3$ & $(1,2)$ & $z_2$\\
\hline
$3$ & $(1,1,1)$ & $z_1z_2$\\
\hline
$4$ & $(1,3)$ & $z_3$\\
\hline
$4$ & $(2,2)$ & $z_1$\\
\hline
$4$ & $(1,1,2)$ & $z_1z_2$\\
\hline
$4$ & $(1,1,1,1)$ & $z_1z_2z_3$\\
\hline
 $5$ & $(1,4)$ & $z_4$\\
\hline
 $5$ & $(2,3)$ & $z_1$\\
\hline
 $5$ & $(1,2,2)$ & $z_1z_2$\\
\hline
 $5$ & $(1,1,3)$ & $z_1z_3$\\
\hline
 $5$ & $(1,1,1,2)$ & $z_1z_2z_3$\\
\hline
 $5$ & $(1,1,1,1,1)$ & $z_1z_2z_3z_4$\\
\hline
 $5$ & $(1,5)$ & $z_1$\\
\hline
 $6$ & $(2,4)$ & ?\\
\hline
 $6$ & $(3,3)$ & $z_1$\\
\hline
 $6$ & $(2,2,2)$ & $z_1z_2$\\
 \hline
\end{tabular}
\end {center}

\section{Example: Severi degrees}\label{sec:severi}

Let $S$ be a nonsingular projective surface and $L$ a $5r$-ample line bundle on $S$. Let $N_{r}(L)$ denote the count of $r$-nodal hypersurfaces in a generic linear system $\PP^r \subset |L|$. According to \cite{kleimanpiene}, we can write $N_{r}=P_{r}/r !$ where $P_r$ satisfy the formal identity in $t$
\[\sum_{r \ge 0} \frac{P_r t^r}{r!}=\exp (\sum_{q\ge 1}\frac{a_qt^q}{q!})\]
for some integers $a_0, a_1, \ldots $. In particular, 
\[P_0=1,P_1=a_1,P_2=a_1^2+a_2, P_3 =a_1^3+3a_2a_1+a_3,\ldots .\]
where Kleiman and Piene compute $a_i$ for $i\le 8$, see page 2 in \cite{kleimanpiene}:
\[a_1=3L^2+2Lc_1(S)+c_2(S)\]
\[a_2=-42L^2-39Lc_1(S)-6c_1^2(S)-7c_2(S)\]
\[a_3=1380L^2+1576Lc_1(S)+376c_1^2(S) + 138c_2(S).\]
In the expression $P_r=\sum_{\bi \in \Pi_r}C_{\bi}a^{\bi}$ the terms are indexed by partitions $\bi=(i_1,\ldots, i_s)$ of $r$, hence $i_1+\ldots+i_s=r$. The coefficient $C_{\bi}$ counts the number of ways how $r$ marked points can be partitioned into piles of size $i_1,\ldots, i_s$, that is $C_{\bi}={r \choose i_1\ i_2\ \ldots \ i_s}$. So the term $C_{\bi}a^{\bi}$ corresponds to the $\a=(\a_1,\ldots, \a_s)$ summand in Theorem \ref{main2}, which is an integral over 
$\Hilb^{A_1,\ldots, A_s}(\CC^n)$ where
\[A_j=(i_j,2i_j)=\underbrace{\ydiagram{2, 4}}_{2i_j}.\]
Here $A=\CC[x_1,x_2]/(x^{2r},x^ry,y^2)$ has dimension $3r$. We need to choose a filtration
\[N = N_1 \supset N_2 \supset \ldots \supset N_{m+1}=0\] 
on the vector space 
\[N=\frakm/I\simeq \Span_\CC(z_{10},\ldots, z_{2r-1,0},z_{01},\ldots, z_{r-1,1})\] 
satisfying $N_i \cdot N_j \supset N_{i+j}$. The dimension of the subsequent quotients of this filtration is denoted by $d_i = \dim(N_i/N_{i+1})$ and $\bd=(d_1,\ldots , d_m)$ is the dimension vector which satisfies $d_1+\ldots +d_m=3r-1$.
The formula of Theorem \ref{main1} depends on this filtration, and in general the degree of the Kazarian equivariant dual in the numerator depends on our choice. We have several options in this example:  we can take the canonical filtration $N_i=\mathfrak{m^i}/I$, or the following refinement 
where $N_i$ is spanned by the boxes indexed by $1,2,\ldots, 3r-i$ in the following tableau:
{\small\[\ytableausetup{centertableaux,boxsize=2.5em}
\mathcal{B}=\begin{ytableau}
   2r  &  & 3r-1 \\
     \cdot    & 1 & 2 &  &  &  & &  2r-1
\end{ytableau}\]}
Then $d_1=\ldots =d_{3r-1}=1$ and the weights are given by the number in the Young tableou: $w(z_{i0})=i$ and $w_{i1}=i+2r$.

\begin{theorem}[\textbf{Severi degree formula}] Introduce the variables indexed by boxes 
\[\ytableausetup{centertableaux,boxsize=2em}
\mathcal{B}=\begin{ytableau}
 z_{01} & z_{11}  &  & z_{r-1,1} \\
     \cdot       & z_{10} &  & z_{r-10}& & & & z_{2r-1,0} 
\end{ytableau}\]
and weights $w(z_{i0})=i$ and $w_{i1}=i+2r$. Then  
\begin{equation}\nonumber
a_r=\sires \frac{\prod\limits_{w(z_{ab})<w(z_{a'b'})}(z_{ab}-z_{a'b'})c_{2r}(L,L+z_{ab}:(a,b)\in \mathcal{B})}{\prod\limits_{a+b\le c\le 2r-1}(z_{a0}+z_{b0}-z_{c0}) \prod\limits_{a+b \le c \le r-1}(z_{a0}+z_{b1}-z_{c1}}
\cdot \frac{\epd{Q(A_{(r,2r)},\mathrm{Alg}(N_\bullet)d\bz}}{(z_{10}\cdots z_{r-10})(\prod_{(a,b)\in \mathcal{B}}z_{ab})^2} \prod_{(a,b)\in \mathcal{B}}s_S\left(\frac{1}{z_{ab}}\right)
\end{equation}
where again 
\begin{itemize}
\item $c_{2r}(L,L+z_{ab}:(a,b)\in \mathcal{B})$ denotes the $2r^{th}$ elementary symmetric polynomial formed from the $3r$ formal Chern roots $L,L+z_{ab},(a,b)\in \mathcal{B}$.
\item $s_S=1/c_S$ is the total Segre class of $S$ and in particular $s_0=1,s_1=c_1(S),s_2=c_1^2-c_2$.
\item $\mathrm{Alg}(N_\bullet)$ is the vector space of filtered commutative algebra structures on the filtration $N_\bullet$, and $Q(A_{r,2r})$ is the subset of those algebras which are isomorphic to $\mathfrak{m}/(x^2r,x^ry,y^2)$, and $\epd{Q(A_{(r,2r)},\mathrm{Alg}(N_\bullet)d\bz}$ is the torus-equivariant dual, which is a homogeneous polynomial in $z_{ab}$ (also called the Kazarian dual).
\end{itemize}
\end{theorem}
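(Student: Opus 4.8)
The plan is to combine G\"ottsche's reformulation of the nodal count with the partition-sum of Theorem \ref{main2}, and then to isolate $a_r$ as the single-block (connected) term, which Theorem \ref{main1} evaluates explicitly. By Example \ref{example:monsubsets}(3) the $r$-nodal count over a $5r$-ample $L$ is the tautological integral
\[
N_r(L)=\int_{\Hilb^{\mathfrak m/\mathfrak m^2,\dots,\mathfrak m/\mathfrak m^2}(S)} c_{2r}(L^{[3r]}),
\]
over the G\"ottsche geometric subset on $s=r$ marked nodes, each carrying the algebra $\mathfrak m/\mathfrak m^2=\CC[x,y]/(x^2,xy,y^2)$ of dimension $3$. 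Applying Theorem \ref{main2} expands this as a sum over set partitions $\a=(\a_1,\dots,\a_t)\in\Pi(r)$, in which $\mathcal R^\a$ factors as a product $\prod_{l=1}^t$ of block contributions. Since the nodes are balanced, the $l$-th block is the curvilinear sum of $|\a_l|$ G\"ottsche partitions $(2,1)$, namely $(2|\a_l|,|\a_l|)$ (see the example after Definition \ref{addition}), so the $l$-th factor is an integral over the curvilinear component $\Hilb^{A_{(|\a_l|,2|\a_l|)}}(\CC^2)$ and depends only on the block size $|\a_l|$.

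Next I would match this decomposition with the Kleiman--Piene exponential identity $\sum_r P_rt^r/r!=\exp(\sum_q a_qt^q/q!)$, whose set-partition expansion reads $P_r=\sum_{\pi\in\Pi(r)}\prod_{B\in\pi}a_{|B|}$. Because the block factor of Theorem \ref{main2} is multiplicative over blocks and a function of block size alone, the two partition sums have the same shape; comparing them and tracking the branched-cover normalisations of \S\ref{subsec:sieve} identifies $a_r$ with the deepest, single-block term $\a=\Lambda$. Thus $a_r$ is the contribution of the curvilinear component $\Hilb^{A_{(r,2r)}}(\CC^2)$ with $A_{(r,2r)}=\CC[x,y]/(x^{2r},x^ry,y^2)$ of dimension $3r$.

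It remains to evaluate this single-block term. As in \S\ref{subsec:normalbundle}, the balanced case reduces integration over $\Hilb^{\mathfrak m/\mathfrak m^2,\dots,\mathfrak m/\mathfrak m^2}(\CC^2)$ to integration over $\Hilb^{A_{(r,2r)}}(\CC^2)$ after dividing by $\Euler(B)$, where $B$ is the Haiman bundle of $r$ points with $\Euler(B)=z_{10}\cdots z_{r-1,0}$ (the product over the $r-1$ weight-classes $1,\dots,r-1$). Applying Theorem \ref{main1} to this integral with $n=2$, $F=L$ of rank one, and the natural filtration $N_\bullet$ with $\bd=(1,\dots,1)$ given by the tableau $\mathcal B$ (weights $w(z_{i0})=i$, $w(z_{i1})=i+2r$), I read off every factor of the claimed formula: the $3r-1$ residue variables $z_{ab}$ are indexed by $\mathcal B\setminus\{(0,0)\}$; the twisted Chern roots are $L$ and $L+z_{ab}$, so $\Phi(F(\bz))$ becomes $c_{2r}(L,L+z_{ab})$; the numerator factor $\prod_{w(i)\le w(j)}(z_i-z_j)$ becomes $\prod_{w(z_{ab})<w(z_{a'b'})}(z_{ab}-z_{a'b'})$; the relations $x^a\!\cdot\! x^b=x^{a+b}$ and $x^a\!\cdot\! x^by=x^{a+b}y$ with $y^2=0$ produce the two denominator products $\prod_{a+b\le c\le 2r-1}(z_{a0}+z_{b0}-z_{c0})$ and $\prod_{a+b\le c\le r-1}(z_{a0}+z_{b1}-z_{c1})$; the factor $(z_1\cdots z_{k-1})^n$ becomes $(\prod_{(a,b)}z_{ab})^2$ and the Segre factors become $s_S(1/z_{ab})$; and the Haiman Euler class contributes the extra $z_{10}\cdots z_{r-1,0}$ in the denominator. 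This is exactly the displayed expression for $a_r$, read as a degree-$4$ intersection number on $S$.

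The main obstacle I anticipate is the combinatorial matching and its normalisation. I must verify that the product-over-blocks form of $\mathcal R^\a$ in Theorem \ref{main2} really realises the multiplicative structure of the Kleiman--Piene series, i.e. that each block factor is independent of the others and depends only on its size, and that the branched-cover and sieve factors of \S\ref{subsec:sieve} assemble the multinomials $C_{\bi}$ correctly, so that the single-block term is precisely $a_r$ rather than a rational multiple of it. A secondary, more technical point is confirming that the chosen flag of $N=\mathfrak m/(x^{2r},x^ry,y^2)$ is a natural filtration in the sense of the introduction, namely that every algebra automorphism of $N$ preserves $N_\bullet$, since Theorem \ref{main1} requires this and the degree of the Kazarian dual $\epd{Q(A_{(r,2r)})\subset\mathrm{Alg}(N_\bullet)}$ in the $z_{ab}$, hence the homogeneity of the residue, depends on that choice.
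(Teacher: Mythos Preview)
Your proposal is correct and follows essentially the same approach as the paper: Göttsche's tautological expression for $N_r$ is expanded via Theorem \ref{main2} as a sum over $\Pi(r)$, this is matched term-by-term with the Kleiman--Piene exponential identity (the paper notes explicitly that the term $C_{\bi}a^{\bi}$ corresponds to the $\alpha$-summand of Theorem \ref{main2}), so $a_r$ is isolated as the single-block contribution and then read off from Theorem \ref{main1} applied to $A_{(r,2r)}$ with the indicated filtration and the balanced Haiman Euler class $z_{10}\cdots z_{r-1,0}$. The obstacles you flag (the multinomial normalisation and the naturality of $N_\bullet$) are precisely the points the paper leaves implicit in the surrounding discussion.
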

The Kazarian dual is determined by the equations of $Q(A_{r,2r})$ in the ambient vector space $N_\bullet$. Recall from \S \ref{subsec:kazarianmodel} that we have torus-equivariant embeddings 
\[Q(A)=\mathrm{Assoc}_A(d,N) \subseteq \mathrm{Assoc}(N_{\bullet}) \subseteq \mathrm{Alg}(N_\bullet),\]
hence 
\[\epd{Q(A_{(r,2r)},\mathrm{Alg}(N_\bullet)d\bz}=P_{(r,2r)}(\bz) \cdot \epd{\mathrm{Assoc}(n_\bullet),\mathrm{Alg}(N_\bullet)d\bz}\]  
is independent of $A$, and determined by the associativity equations \eqref{assoceq}. Here $P_{(r,2r)}$ is a homogeneous polynomial determined by the equations of the algebra $A_{(r,2r)}$ in the space of all filtered commutative associative algebra on $N_\bullet$. We will discuss these equations and calculations of the Kazarian dual in more details in \cite{berczitau4}.

\begin{example}[$r=1$]
The residue variables are 
\[\ytableausetup{centertableaux}
\mathcal{B}=\begin{ytableau}
 z_{01}    \\
     \cdot       & z_{10}  \\
\end{ytableau}\]
We leave as an exercise to check that 
\[a_1=3L^2+2Lc_1(S)+c_2(S)=\sires \frac{(z_{10}-z_{01})^2c_2(L,L+z_{10},L+z_{01})d\bz}{2(z_{10}z_{01})^2}s(1/z_{10})s(1/z_{01})\]
\end{example}
where:
\begin{itemize}
\item $c_2(L,L+z_{10},L+z_{01})$ denotes the second elementary symmetric polynomial formes from the formal Chern roots $L,L+z_{10},L+z_{01}$ and 
\item $s_S=1/c_S$ is the total Segre class of $S$ and in particular $s_0=1,s_1=c_1(S),s_2=c_1^2-c_2$. 
\end{itemize}
\begin{example}[$r=2$] Here the test curve model, which we worked out in Example \ref{example:gottsche} gives a better formula, because the Grassmannian equivariant dual has degree $0$ from dimension count, has it is $1$, whereas calculation of the Kazarian dual involves some associativity equations. We will come back to this example in \cite{berczitau4}.
The residue variables are
\[\ytableausetup{centertableaux}
\mathcal{B}=\begin{ytableau}
 z_{01} & z_{11}   \\
     \cdot       & z_{10} & z_{20} & z_{30} \\
\end{ytableau}\]
The test curve model in this case gives the following identity, which can be checked using Maple:
\small
\begin{multline}\nonumber
-42L^2-39Lc_1(S)-6c_1^2(S)-7c_2(S)=a_2=\\
\sires \frac{\prod_{(a,b)<(a',b')\in \mathcal{B}}(z_{ab}-z_{a'b'})c_4(L,L+z_{10},L+z_{20},L+z_{30},L+z_{01},L+z_{11})\prod_{(a,b)\in \mathcal{B}}s_S\left(\frac{1}{z_{ab}}\right)d\bz}{6z_{10}(2z_{10}-z_{20})(z_{10}+z_{20}-z_{30})(2z_{10}-z_{30})(z_{10}+z_{01}-z_{30})(z_{10}+z_{01}-z_{11})(2z_{10}-z_{11})(z_{10}z_{20}z_{30}z_{01}z_{11})^2} 
\end{multline}
where again 
\begin{itemize}
\item $c_4(L,L+z_{10},L+z_{20},L+z_{30},L+z_{01},L+z_{11})$ denotes the fourth elementary symmetric polynomial formes from the formal Chern roots $L,L+z_{ab},(a,b)\in \mathcal{B}$.
\item$s_S=1/c_S$ is the total Segre class of $S$ and in particular $s_0=1,s_1=c_1(S),s_2=c_1^2-c_2$.
\end{itemize}
\end{example}


\bibliographystyle{abbrv}
\bibliography{thom.bib}

\end{document}